\documentclass{lmcs}
\pdfoutput=1

\usepackage{lastpage}
\lmcsdoi{15}{1}{15}
\lmcsheading{}{\pageref{LastPage}}{}{}%
{Dec.~19,~2017}{Feb.~22,~2019}{}


\usepackage{hyperref}
\usepackage{float}
\usepackage{amscd}
\usepackage{graphics}
\usepackage{cancel}
\usepackage{stmaryrd}
\usepackage{tikz}
\usetikzlibrary{matrix,arrows}
\usepackage{amsmath}
\usepackage{amsthm}
\usepackage{amssymb}
\usepackage{proof}
\usepackage{verbatim,cmll}
\usepackage{setspace}
\usepackage{lscape}
\usepackage{latexsym,relsize}
\usepackage{amscd}
\usepackage{multicol}
\usepackage{bussproofs}
\usepackage{tikz}
\usepackage[position=top]{subfig}
\usepackage{leqno}
\usepackage{marvosym}
\usetikzlibrary{arrows}
\usetikzlibrary{matrix}
\usetikzlibrary{patterns}
\usetikzlibrary{shapes}

\DeclareSymbolFont{extraup}{U}{zavm}{m}{n}
\DeclareMathSymbol{\varheart}{\mathalpha}{extraup}{86}
\DeclareMathSymbol{\vardiamond}{\mathalpha}{extraup}{87}
\DeclareMathSymbol{\vardiamond}{\mathalpha}{extraup}{87}

\newcommand\val[1]{{\lbrack\!\lbrack} {#1}{\rbrack\!\rbrack}}
\newcommand\valb[1]{{(\!\lbrack} {#1}{\rbrack\!)}}

\newcommand{\commment}[1]{}

\newcommand{\n}{\mathbf{n}}

\renewcommand{\phi}{\varphi}

\newcommand{\B}{\mathbb{B}}


\def\aol{\rule[0.5865ex]{1.38ex}{0.1ex}}
\def\pdra{\mbox{$\,>\mkern-8mu\raisebox{-0.065ex}{\aol}\,$}}
\renewcommand{\epsilon}{\varepsilon}

\newcommand{\nomi}{\mathbf{i}}
\newcommand{\nomj}{\mathbf{j}}

\newcommand{\cnomm}{\mathbf{m}}
\newcommand{\cnomn}{\mathbf{n}}

\newcommand{\bba}{\mathbb{A}}
\newcommand{\bbA}{\mathbb{A}}
\newcommand{\bbB}{\mathbb{B}}

\newcommand{\bbad}{\mathbb{A}^{\delta}}
\newcommand{\bbbd}{\mathbb{B}^{\delta}}
\newcommand{\kbbas}{K(\mathbb{A}^\delta)}
\newcommand{\obbas}{O(\mathbb{A}^\delta)}

\newcommand{\marginnote}[1]{\marginpar{\raggedright\tiny{#1}}}

\theoremstyle{plain}



\newcommand*\circled[1]{\tikz[baseline=(char.base)]{
		\node[shape=circle ,draw, minimum size=3.5mm, inner sep=0pt] (char) {#1};}}

\tikzset{
	treenode/.style = {align=center, inner sep=0pt, text centered},
	Ske/.style = {treenode, ellipse, double, draw=black,
		minimum width=6pt, thick},
	PIA/.style = {treenode, ellipse, black, draw=black,
		minimum width=6pt},
	Crit/.style = {treenode, rectangle, draw=black,
		minimum width=0.5em, minimum height=0.5em}
}

\theoremstyle{plain}
\newtheorem{theorem}{Theorem}[section]

\newtheorem{example}[theorem]{Example}
\newtheorem{lemma}[thm]{Lemma}

\theoremstyle{definition}

\newtheorem{definition}[thm]{Definition}
\newtheorem{remark}[thm]{Remark}

\begin{document}

\title[Sahlqvist via Translation]{Sahlqvist via Translation}

\author[Conradie, Palmigiano and Zhao]{Willem Conradie}	
\address{School of Mathematics, University of the Witwatersrand, Johannesburg, South Africa}	
\email{willem.conradie@wits.ac.za}  
\thanks{The research of the first author was partially supported by the National Research Foundation of South Africa, Grant number 81309.}	

\author[]{Alessandra Palmigiano}	
\address{Faculty of Technology, Policy and Management, Delft University of Technology, the Netherlands, and Department of Pure and Applied Mathematics, University of Johannesburg, South Africa}	
\email{A.Palmigiano@tudelft.nl}  
\thanks{The research of the second and third authors has been made possible by the NWO Vidi grant 016.138.314, by the NWO Aspasia grant 015.008.054, and by a Delft Technology Fellowship awarded in 2013.}	

\author[]{Zhiguang Zhao}	
\address{Faculty of Technology, Policy and Management, Delft University of Technology, the Netherlands}	
\email{zhaozhiguang23@gmail.com}  



\keywords{Sahlqvist theory, G\"odel-McKinsey-Tarski translation, algorithmic correspondence, canonicity, normal distributive lattice expansions, Heyting algebras, co-Heyting algebras, bi-Heyting algebras.}
\subjclass{F.4.1, I.2.4}


\begin{abstract}
  \noindent In recent years, {\em unified correspondence} has been developed as a generalized Sahlqvist theory which applies uniformly to all signatures  of normal and regular (distributive) lattice expansions. A fundamental tool for attaining this level of generality and uniformity is a principled way, based on order theory, to define the Sahlqvist and inductive formulas and inequalities in every such signature. This  definition covers in particular all  (bi-)intuitionistic modal logics. The theory of these logics has been intensively studied over the past seventy years in connection with classical polyadic modal logics, using versions of G\"odel-McKinsey-Tarski translations, suitably defined in each signature, as main tools. In view of this state-of-the-art, it is natural to ask (1) whether a general perspective on G\"odel-McKinsey-Tarski translations can be attained, also based on order-theoretic principles like those underlying the general definition of Sahlqvist and inductive formulas and inequalities, which accounts for the known G\"odel-McKinsey-Tarski translations and applies uniformly to all signatures of normal (distributive) lattice expansions; (2) whether this general perspective can be used to transfer  correspondence and canonicity theorems for Sahlqvist and inductive formulas and inequalities in all signatures described above under G\"odel-McKinsey-Tarski translations.
  
  In the present paper, we set out to answer these questions. We answer (1) in the affirmative; as to (2), we prove the transfer of the correspondence theorem  for inductive inequalities of  arbitrary signatures of normal distributive lattice expansions. We also prove the transfer of  canonicity  for inductive inequalities, but only restricted to arbitrary normal modal expansions of {\em bi-intuitionistic logic}. We also analyze  the difficulties involved in obtaining the transfer of canonicity outside this setting, and indicate a route to extend the transfer of canonicity to all signatures of normal distributive lattice expansions.
\end{abstract}

\maketitle

\section{Introduction} 
Sahlqvist theory has a long history in normal modal logic, going back to \cite{Sa75}. The Sahlqvist theorem in \cite{Sa75} gives a syntactic definition of a class of modal formulas, the {\em Sahlqvist class}, each member of which defines an elementary (i.e.\ first-order definable) class of frames and is canonical.

Over the years, many extensions, variations and analogues of this result have appeared, including alternative proofs in e.g.\ \cite{SaVa89}, generalizations to arbitrary modal signatures \cite{dRVe95}, variations of the correspondence language \cite{OhSc97,Benthem06}, Sahlqvist-type results for hybrid logics  \cite{tCMaVi06},  various substructural logics \cite{Kurtonina,DunnGP05,Ge06}, mu-calculus  \cite{BeHovB12}, and enlargements of the Sahlqvist class to e.g.\ the {\em inductive} formulas of \cite{GorankoV06}, to mention but a few.

Recently, a uniform and modular theory has emerged, called {\em unified correspondence} \cite{CoGhPa14}, which subsumes the above results and extends them to logics with a {\em non-classical} propositional base. It is built on duality-theoretic insights \cite{ConPalSou12} and uniformly exports the state-of-the-art in Sahlqvist theory from normal modal logic to a wide range of logics which include, among others, intuitionistic and distributive and general (non-distributive) lattice-based (modal) logics \cite{ConPal12, ConPal13},  non-normal (regular) modal logics based on distributive lattices of arbitrary modal signature \cite{PaSoZh15b}, hybrid logics \cite{CoRo14}, many valued logics, \cite{manyval} and bi-intuitionistic and lattice-based modal mu-calculus \cite{CoCr14,CoFoPaSo15,long-version}.

The breadth of this work has stimulated many and varied applications. Some are closely related to the core concerns of the theory itself, such as understanding the relationship between different methodologies for obtaining canonicity results \cite{PaSoZh15a,CPZ:constructive}, or the exploration of the limits of applicability of the theory \cite{zhao2017Hausdorff, zhao2016possibility} or of the phenomenon of pseudocorrespondence \cite{CoPaSoZh15}.  Other, possibly surprising applications include the dual characterizations of classes of finite lattices \cite{FrPaSa14}, the identification of the syntactic shape of axioms which can be translated into structural rules of a proper display calculus \cite{GrMaPaTzZh15} and of internal Gentzen calculi for the logics of strict implication \cite{MaZh15}, and the epistemic interpretation of lattice-based modal logic in terms of categorization theory in management science \cite{categorization, TarkPaper}. The approach underlying these results relies only on  the order-theoretic properties of the algebraic interpretations of logical connectives, abstracting away from specific logical signatures.

Featuring prominently among the logics targeted by  these developments are  (bi-)intuitionistic logic and their all modal expansions. 
The theory of these logics has been intensively studied over the past seventy years using the  G\"{o}del-McKinsey-Tarski translation \cite{Godel:Interp, McKinseyTarski:Translation}, henceforth simply the GMT translation, as a key tool. 
Specifically, since the 1940s and up to the present day, versions of the GMT translation have been used for transferring and reflecting results between classical and intuitionistic logics and their extensions and expansions (see e.g.\ \cite{mckinsey1948some, blok1976varieties, esakia1976modal, fischerservi1977modal, ChagrovZ91, WoZa97, WoZa98, wolter2014blok, vBBeHo16}. More on this in Section \ref{ssec:brief history}). 

In view of this state-of-the-art, it is natural to ask (1) whether a general perspective on G\"odel-McKinsey-Tarski translations can be attained, also based on  order-theoretic principles like those underlying the general definition of Sahlqvist and inductive formulas and inequalities, which accounts for the known G\"odel-McKinsey-Tarski translations and applies uniformly to all signatures of normal (distributive) lattice expansions; (2) whether this general perspective can be used to transfer  correspondence and canonicity theorems for Sahlqvist and inductive formulas and inequalities in the signatures described above under G\"odel-McKinsey-Tarski translations.
%
In the present paper, we set out to answer these questions.

Notice that in general, GMT translations do not preserve the Sahlqvist shape.
For instance, 
the original GMT translation 
transforms the Sahlqvist inequality $\Box\Diamond p\leq \Diamond p$ into ${\Box\Diamond\Box_G p}\leq \Diamond \Box_G p$,  which is not Sahlqvist, and in fact does not even have a first-order correspondent \cite{vanBenthem:Reduction:Principles}. Any translation which `boxes' propositional variables would suffer from this problem (for further discussion  see \cite[Section 36.9]{CoGhPa14}).
However, {\em some} GMT translations preserve the shape of {\em some} Sahlqvist and inductive formulas or inequalities.
This has been exploited by Gehrke, Nagahashi and Venema in \cite{GeNaVe05} to obtain the correspondence part of their Sahlqvist theorem for Distributive Modal Logic. The need to establish a suitable match between GMT translations and Sahlqvist and inductive formulas or inequalities in each signature gives us a concrete reason to investigate GMT translations as a class. 
%

The starting point of our analysis, and first contribution of the present paper, is an order-theoretic  generalization of the main semantic property of the original GMT translation. We show that  this generalization provides a unifying pattern, instantiated in the concrete GMT translations in each setting of interest to the present paper. As an application of this generalization, we prove two transfer results, which are the main contributions of the present paper: the transfer of {\em generalized Sahlqvist correspondence} from multi-modal (classical) modal logic to logics of arbitrary normal distributive lattice expansions and, in a more restricted setting, the transfer of {\em generalized Sahlqvist canonicity} from multi-modal (classical) modal logic  to logics of arbitrary normal {\em bi-Heyting algebra} expansions. The transfer of correspondence extends \cite[Theorem 3.7]{GeNaVe05} both as regards the setting (from Distributive Modal Logic to arbitrary normal DLE-logics) and the scope (from Sahlqvist to inductive inequalities). The transfer of canonicity is entirely novel also in its formulation, in the sense that it targets specific, syntactically defined classes. Finally, we  analyze the difficulties in extending the transfer of canonicity to  normal DLE-logics. Thanks to this analysis,  we identify a route towards this result, which -- however -- calls for a much higher level of technical sophistication than required by the usual route.

The paper is structured as follows. Section \ref{sec:preliminaries} collects preliminaries on  logics of normal DLEs and their algebraic and relational semantics. In Section \ref{semantic environment} we discuss  GMT translations in various DLE settings in the literature  and the semantic underpinnings of the GMT translation for intuitionistic logic. In Section \ref{ssec:template} we introduce a general template which accounts for the main semantic property of GMT translations---their being full and faithful---in the setting of ordered algebras of arbitrary signatures. In Section \ref{sec:instantiations}, we show that the GMT translations of interest instantiate this template. This sets the stage for Sections \ref{sec:Corresp:Via:Trans} and \ref{sec:canonicity} where we present our transfer results. 
We conclude in Section \ref{Sec:Conclusions}.

\section{Preliminaries on normal DLEs and their logics}\label{sec:preliminaries}

In this section we collect some basic background on logics of normal distributive lattice expansions (DLEs). All the logics which we will consider in this paper are particular instances of DLE-logics.

\subsection{Language and axiomatization of basic DLE-logics}\label{subset:language:algsemantics}
Our base language is an unspecified but fixed language $\mathcal{L}_\mathrm{DLE}$, to be interpreted over distributive lattice expansions of compatible similarity type.
We will make heavy use of the following auxiliary definition: an {\em order-type} over $n\in \mathbb{N}$\footnote{Throughout the paper, order-types will be typically associated with arrays of variables $\vec p: = (p_1,\ldots, p_n)$. When the order of the variables in $\vec p$ is not specified, we will sometimes abuse notation and write $\varepsilon(p) = 1$ or $\varepsilon(p) = \partial$.} is an $n$-tuple $\epsilon\in \{1, \partial\}^n$. For every order type $\epsilon$, we denote its {\em opposite} order type by $\epsilon^\partial$, that is, $\epsilon^\partial_i = 1$ iff $\epsilon_i=\partial$ for every $1 \leq i \leq n$. For any lattice $\bba$, we let $\bba^1: = \bba$ and $\bba^\partial$ be the dual lattice, that is, the lattice associated with the converse partial order of $\bba$. For any order type $\varepsilon$, we let $\bba^\varepsilon: = \Pi_{i = 1}^n \bba^{\varepsilon_i}$.

The language $\mathcal{L}_\mathrm{DLE}(\mathcal{F}, \mathcal{G})$ (from now on abbreviated as $\mathcal{L}_\mathrm{DLE}$) takes as parameters: 1) a denumerable set $\mathsf{PROP}$ of proposition letters, elements of which are denoted $p,q,r$, possibly with indexes; 2) disjoint sets of connectives $\mathcal{F}$ and  $\mathcal{G}$. 
Each $f\in \mathcal{F}$ and $g\in \mathcal{G}$ has arity $n_f\in \mathbb{N}$ (resp.\ $n_g\in \mathbb{N}$) and is associated with some order-type $\varepsilon_f$ over $n_f$ (resp.\ $\varepsilon_g$ over $n_g$).\footnote{Unary $f$ (resp.\ $g$) will be sometimes denoted as $\Diamond$ (resp.\ $\Box$) if the order-type is 1, and $\lhd$ (resp.\ $\rhd$) if the order-type is $\partial$.} 
The terms (formulas) of $\mathcal{L}_\mathrm{DLE}$ are defined recursively as follows:
\[
\phi ::= p \mid \bot \mid \top \mid \phi \wedge \phi \mid \phi \vee \phi \mid f(\overline{\phi}) \mid g(\overline{\phi})
\]
where $p \in \mathsf{PROP}$, $f \in \mathcal{F}$, $g \in \mathcal{G}$. Terms in $\mathcal{L}_\mathrm{DLE}$ will be denoted either by $s,t$, or by lowercase Greek letters such as $\varphi, \psi, \gamma$ etc. 

\begin{definition}
	\label{def:DLE:logic:general}
	For any language $\mathcal{L}_\mathrm{DLE} = \mathcal{L}_\mathrm{DLE}(\mathcal{F}, \mathcal{G})$, the {\em basic}, or {\em minimal} $\mathcal{L}_\mathrm{DLE}$-{\em logic} is a set of sequents $\phi\vdash\psi$, with $\phi,\psi\in\mathcal{L}_\mathrm{LE}$, which contains the following axioms:
	\begin{itemize}
		\item Sequents for  lattice operations:
		\begin{align*}
		&p\vdash p, && \bot\vdash p, && p\vdash \top, & & p\wedge (q\vee r)\vdash (p\wedge q)\vee (p\wedge r), &\\
		&p\vdash p\vee q, && q\vdash p\vee q, && p\wedge q\vdash p, && p\wedge q\vdash q, &
		\end{align*}
		\item Sequents for $f\in \mathcal{F}$ and $g\in \mathcal{G}$:
		\begin{align*}
		& f(p_1,\ldots, \bot,\ldots,p_{n_f}) \vdash \bot,~\mathrm{for}~ \varepsilon_f(i) = 1,\\
		& f(p_1,\ldots, \top,\ldots,p_{n_f}) \vdash \bot,~\mathrm{for}~ \varepsilon_f(i) = \partial,\\
		&\top\vdash g(p_1,\ldots, \top,\ldots,p_{n_g}),~\mathrm{for}~ \varepsilon_g(i) = 1,\\
		&\top\vdash g(p_1,\ldots, \bot,\ldots,p_{n_g}),~\mathrm{for}~ \varepsilon_g(i) = \partial,\\
		&f(p_1,\ldots, p\vee q,\ldots,p_{n_f}) \vdash f(p_1,\ldots, p,\ldots,p_{n_f})\vee f(p_1,\ldots, q,\ldots,p_{n_f}),~\mathrm{for}~ \varepsilon_f(i) = 1,\\
		&f(p_1,\ldots, p\wedge q,\ldots,p_{n_f}) \vdash f(p_1,\ldots, p,\ldots,p_{n_f})\vee f(p_1,\ldots, q,\ldots,p_{n_f}),~\mathrm{for}~ \varepsilon_f(i) = \partial,\\
		& g(p_1,\ldots, p,\ldots,p_{n_g})\wedge g(p_1,\ldots, q,\ldots,p_{n_g})\vdash g(p_1,\ldots, p\wedge q,\ldots,p_{n_g}),~\mathrm{for}~ \varepsilon_g(i) = 1,\\
		& g(p_1,\ldots, p,\ldots,p_{n_g})\wedge g(p_1,\ldots, q,\ldots,p_{n_g})\vdash g(p_1,\ldots, p\vee q,\ldots,p_{n_g}),~\mathrm{for}~ \varepsilon_g(i) = \partial,
		\end{align*}
	\end{itemize}
	and is closed under the following inference rules:
	\begin{displaymath}
	\frac{\phi\vdash \chi\quad \chi\vdash \psi}{\phi\vdash \psi}
	\quad
	\frac{\phi\vdash \psi}{\phi(\chi/p)\vdash\psi(\chi/p)}
	\quad
	\frac{\chi\vdash\phi\quad \chi\vdash\psi}{\chi\vdash \phi\wedge\psi}
	\quad
	\frac{\phi\vdash\chi\quad \psi\vdash\chi}{\phi\vee\psi\vdash\chi}
	\end{displaymath}
	\begin{displaymath}
	\frac{\phi\vdash\psi}{f(p_1,\ldots,\phi,\ldots,p_n)\vdash f(p_1,\ldots,\psi,\ldots,p_n)}{~(\varepsilon_f(i) = 1)}
	\end{displaymath}
	\begin{displaymath}
	\frac{\phi\vdash\psi}{f(p_1,\ldots,\psi,\ldots,p_n)\vdash f(p_1,\ldots,\phi,\ldots,p_n)}{~(\varepsilon_f(i) = \partial)}
	\end{displaymath}
	\begin{displaymath}
	\frac{\phi\vdash\psi}{g(p_1,\ldots,\phi,\ldots,p_n)\vdash g(p_1,\ldots,\psi,\ldots,p_n)}{~(\varepsilon_g(i) = 1)}
	\end{displaymath}
	\begin{displaymath}
	\frac{\phi\vdash\psi}{g(p_1,\ldots,\psi,\ldots,p_n)\vdash g(p_1,\ldots,\phi,\ldots,p_n)}{~(\varepsilon_g(i) = \partial)}.
	\end{displaymath}
  \\[2mm]
	The minimal DLE-logic is denoted by $\mathbf{L}_\mathrm{DLE}$. For any DLE-language $\mathcal{L}_{\mathrm{DLE}}$, by an {\em $\mathrm{DLE}$-logic} we understand any axiomatic extension of the basic $\mathcal{L}_{\mathrm{DLE}}$-logic in $\mathcal{L}_{\mathrm{DLE}}$.
\end{definition}

\begin{example}
	\label{ex:various DLE-languages}
	The DLE setting is extremely general, covering a wide spectrum of non-classical logics. Here we give a few examples, showing how various languages are obtained as specific instantiations of $\mathcal{F}$ and $\mathcal{G}$. The associated logics extended the basic DLE logics corresponding to these instantiations.
	
	The formulas of intuitionistic logic are obtained by instantiating $\mathcal{F}: = \varnothing$   and   $\mathcal{G}: = \{\rightarrow\}$ with $n_\rightarrow = 2$, and $\varepsilon_\rightarrow = (\partial, 1)$. The formulas of bi-intuitionistic logic (cf.~\cite{rauszer1974semi})  are obtained by instantiating $\mathcal{F}: = \{\pdra \}$ with $n_{\pdra}  = 2$ and $\varepsilon_{\pdra} = (\partial, 1)$,  and   $\mathcal{G}: = \{\rightarrow\}$ with $n_\rightarrow = 2$ and $\varepsilon_\rightarrow = (\partial, 1)$. The formulas of Fischer Servi's intuitionistic modal logic (cf.~\cite{fischerservi1977modal}), Prior's MIPC (cf.~\cite{prior1955time}), and G.~Bezhanishvili's intuitionistic modal logic with universal modalities (cf.~\cite{bezhanishvili2009universal}) are obtained by instantiating $\mathcal{F}: = \{\Diamond\}$ with $n_\Diamond  = 1$ and $\varepsilon_\Diamond = 1$   and   $\mathcal{G}: = \{\rightarrow, \Box\}$ with $n_\rightarrow = 2$, and $\varepsilon_\rightarrow = (\partial, 1)$, and $n_\Box = 1$, and $\varepsilon_\Box = 1$.
	The formulas of Wolter's bi-intuitionistic modal logic (cf.~\cite{wolter1998CoImplication}) are obtained by instantiating $\mathcal{F}: = \{\pdra, \Diamond\}$ with $n_\Diamond  = 1$ and $\varepsilon_\Diamond = 1$  and $n_{\pdra}  = 2$ and $\varepsilon_{\pdra} = (\partial, 1)$,  and   $\mathcal{G}: = \{\rightarrow, \Box\}$ with $n_\rightarrow = 2$, and $\varepsilon_\rightarrow = (\partial, 1)$, and $n_\Box = 1$, and $\varepsilon_\Box = 1$.  The formulas of Dunn's \emph{positive modal logic} (cf.\ \cite{dunn1995positive}) are obtained by instantiating $\mathcal{F}: = \{\Diamond\}$ with $n_\Diamond = 1$, $\varepsilon_\Diamond = 1$  and  $\mathcal{G}: = \{\Box \}$ with $n_\Box = 1$ and $\varepsilon_\Box = 1$. The language of Gehrke, Nagahashi and Venema's \emph{distributive modal logic} (cf.\ \cite{GeNaVe05}) is an expansion of positive modal logic and is obtained by adding the connectives ${\lhd}$ and ${\rhd}$ to $\mathcal{F}$ and $\mathcal{G}$, respectively, with $n_\lhd = n_\rhd = 1$ and $\varepsilon_\lhd = \varepsilon_\rhd = \partial$.
\end{example}

\subsection{Algebraic and relational semantics for basic DLE-logics}\label{ssec: DLEs}
The following definition captures the algebraic setting of the present paper: 

\begin{definition}
	\label{def:DLE}
	For any tuple $(\mathcal{F}, \mathcal{G})$ of disjoint sets of function symbols as above, a {\em distributive lattice expansion} (abbreviated as DLE) is a tuple $\bba = (L, \mathcal{F}^\bbA, \mathcal{G}^\bbA)$ such that $L$ is a bounded distributive lattice, $\mathcal{F}^\bbA = \{f^\bbA\mid f\in \mathcal{F}\}$ and $\mathcal{G}^\bbA = \{g^\bbA\mid g\in \mathcal{G}\}$, such that every $f^\bbA\in\mathcal{F}^\bbA$ (resp.\ $g^\bbA\in\mathcal{G}^\bbA$) is an $n_f$-ary (resp.\ $n_g$-ary) operation on $\bbA$. A DLE is {\em normal} if every $f^\bbA\in\mathcal{F}^\bbA$ (resp.\ $g^\bbA\in\mathcal{G}^\bbA$) preserves finite (hence also empty) joins (resp.\ meets) in each coordinate with $\epsilon_f(i)=1$ (resp.\ $\epsilon_g(i)=1$) and reverses finite (hence also empty) meets (resp.\ joins) in each coordinate with $\epsilon_f(i)=\partial$ (resp.\ $\epsilon_g(i)=\partial$).\footnote{\label{footnote:DLE vs DLO} Normal DLEs are sometimes referred to as {\em distributive lattices with operators} (DLOs). This terminology derives from the setting of Boolean algebras with operators, in which operators are understood as operations which preserve finite (hence also empty) joins in each coordinate. Thanks to the Boolean negation, operators are typically taken as primitive connectives, and all the other modal operations are reduced to these. However, this terminology is somewhat ambiguous in the lattice setting, in which primitive operations are typically maps which are operators if seen as $\bbA^\epsilon\to \bbA^\eta$ for some order-type $\epsilon$ on $n$ and some order-type $\eta\in \{1, \partial\}$. Rather than speaking of lattices with $(\varepsilon, \eta)$-operators, we then speak of normal DLEs. This terminology is also used in other papers developing Sahlqvist-type results at a  level of generality comparable to that of the present paper, e.g.~\cite{GrMaPaTzZh15, long-version}.} Let $\mathbb{DLE}$ be the class of normal DLEs. Sometimes we will refer to certain DLEs as $\mathcal{L}_\mathrm{DLE}$-algebras when we wish to emphasize that these algebras have a compatible signature with the logical language we have fixed.
	A distributive lattice is {\em perfect} if it is complete, completely distributive and completely join-generated by  its completely join-prime elements. Equivalently, a distributive lattice is perfect iff it is isomorphic to the lattice of up-sets of some poset. A normal DLE is {\em perfect} if its lattice-reduct is a perfect distributive lattice, and each $f$-operation (resp.\ $g$-operation) is completely join-preserving (resp.\ meet-preserving) in the coordinates $i$ such that $\epsilon_f(i) = 1$ (resp.~$\epsilon_g(i) = 1$) and completely meet-reversing (resp.\ join-reversing) in the coordinates $i$ such that $\epsilon_f(i) = \partial$ (resp.~$\epsilon_g(i) = \partial$). The {\em canonical extension}  of a  normal DLE $\mathbb{A} = (L, \mathcal{F}, \mathcal{G})$
	is  the perfect normal DLE $\mathbb{A}^\delta: = (L^\delta, \mathcal{F}^\sigma, \mathcal{G}^\pi)$, where $L^\delta$  is the canonical extension of  $L$,\footnote{The \emph{canonical extension} of a bounded  lattice $L$ is a complete  lattice $L^\delta$ containing $L$ as a sublattice, such that:
		\begin{enumerate}
			\item \emph{(denseness)} every element of $L^\delta$ is both the join of meets and the meet of joins of elements from $L$;
			\item \emph{(compactness)} for all $S,T \subseteq L$, if $\bigwedge S \leq \bigvee T$ in $L^\delta$, then $\bigwedge F \leq \bigvee G$ for some finite sets $F \subseteq S$ and $G\subseteq T$.
		\end{enumerate}
	}
	and $\mathcal{F}^\sigma: = \{f^\sigma\mid f\in \mathcal{F}\}$ and  $\mathcal{G}^\pi: = \{g^\pi\mid g\in \mathcal{G}\}$.\footnote{An element $k \in L^\delta$ (resp.~$o\in L^\delta$) is \emph{closed} (resp.\ \emph{open}) if is the meet (resp.\ join) of some subset of $L$. We let $K(L^\delta)$ (resp.~$O(L^\delta)$) denote the set of the closed (resp.~open) elements of $L^\delta$. For every unary, order-preserving map $h : L \to M$ between bounded lattices, the $\sigma$-{\em extension} of $h$ is defined firstly by declaring, for every $k\in K(L^\delta)$,
		$$h^\sigma(k):= \bigwedge\{ h(a)\mid a\in L\mbox{ and } k\leq a\},$$ and then, for every $u\in L^\delta$,
		$$h^\sigma(u):= \bigvee\{ h^\sigma(k)\mid k\in K(L^\delta)\mbox{ and } k\leq u\}.$$
		The $\pi$-{\em extension} of $f$ is defined firstly by declaring, for every $o\in O(L^\delta)$,
		$$h^\pi(o):= \bigvee\{ h(a)\mid a\in L\mbox{ and } a\leq o\},$$ and then, for every $u\in L^\delta$,
		$$h^\pi(u):= \bigwedge\{ h^\pi(o)\mid o\in O(L^\delta)\mbox{ and } u\leq o\}.$$
		The definitions above apply also to operations of any finite arity and order-type. Indeed,
		taking  order-duals interchanges closed and open elements:
		$K({(L^\delta)}^\partial) = O(L^\delta)$ and $O({(L^\delta)}^\partial) = K(L^\delta)$;  similarly, $K({(L^n)}^\delta) =K(L^\delta)^n$, and $O({(L^n)}^\delta) =O(L^\delta)^n$. Hence,  $K({(L^\delta)}^\epsilon) =\prod_i K(L^\delta)^{\epsilon(i)}$ and $O({(L^\delta)}^\epsilon) =\prod_i O(L^\delta)^{\epsilon(i)}$ for every lattice $L$ and every order-type $\epsilon$ over any $n\in \mathbb{N}$, where
		\begin{center}
			\begin{tabular}{cc}
				$K(L^\delta)^{\epsilon(i)}: =\begin{cases}
				K(L^\delta) & \mbox{if } \epsilon(i) = 1\\
				O(L^\delta) & \mbox{if } \epsilon(i) = \partial\\
				\end{cases}
				$ &
				$O(L^\delta)^{\epsilon(i)}: =\begin{cases}
				O(L^\delta) & \mbox{if } \epsilon(i) = 1\\
				K(L^\delta) & \mbox{if } \epsilon(i) = \partial.\\
				\end{cases}
				$\\
			\end{tabular}
		\end{center}
		From this it follows that
		${(L^\partial)}^\delta$ can be  identified with ${(L^\delta)}^\partial$,  ${(L^n)}^\delta$ with ${(L^\delta)}^n$, and
		${(L^\epsilon)}^\delta$ with ${(L^\delta)}^\epsilon$ for any order type $\epsilon$ over $n$, where $L^\epsilon: = \prod_{i = 1}^n L^{\epsilon(i)}$.
		These identifications make it possible to obtain the definition of $\sigma$-and $\pi$-extensions of $\epsilon$-monotone operations of any arity $n$ and order-type $\epsilon$ over $n$ by instantiating the corresponding definitions given above for monotone and unary functions.
	}
	Canonical extensions of  Heyting algebras, Brouwerian algebras and bi-Heyting algebras are defined by instantiating the definition above in the corresponding  signatures. The canonical extension of any Heyting (resp.~Brouwerian, bi Heyting) algebra is a (perfect) Heyting (resp.~Brouwerian, bi-Heyting) algebra.
\end{definition}

In the present paper we also find it convenient to talk of normal Boolean algebra expansions (BAEs) (respectively, normal Heyting algebra expansions (HAEs), normal bi-Heyting algebra expansions (bHAEs)) which are structures defined as in Definition \ref{def:DLE}, but replacing the distributive lattice $L$ with a Boolean algebra (respectively, Heyting algebra, bi-Heyting algebra). The logics corresponding to these classes will be collectively referred to as normal BAE-logics, normal HAE-logics, normal bHAE-logics. In what follows we will typically drop the adjective `normal'.

In the remainder of the paper, we will abuse notation and write e.g.\ $f$ for $f^\bbA$ when this causes no confusion.
Normal DLEs constitute the main semantic environment of the present paper. Henceforth, since every DLE is assumed to be normal, the adjective will be typically dropped.
The class of all DLEs is equational, and can be axiomatized by the usual distributive lattice identities and the following equations for any $f\in \mathcal{F}$ (resp.\ $g\in \mathcal{G}$) and $1\leq i\leq n_f$ (resp.\ for each $1\leq j\leq n_g$):
\begin{itemize}
	\item if $\varepsilon_f(i) = 1$, then $f(p_1,\ldots, p\vee q,\ldots,p_{n_f}) = f(p_1,\ldots, p,\ldots,p_{n_f})\vee f(p_1,\ldots, q,\ldots,p_{n_f})$; moreover if $f\in \mathcal{F}_n$, then $f(p_1,\ldots, \bot,\ldots,p_{n_f}) = \bot$,
	\item if $\varepsilon_f(i) = \partial$, then $f(p_1,\ldots, p\wedge q,\ldots,p_{n_f}) = f(p_1,\ldots, p,\ldots,p_{n_f})\vee f(p_1,\ldots, q,\ldots,p_{n_f})$; moreover if $f\in \mathcal{F}_n$, then  $f(p_1,\ldots, \top,\ldots,p_{n_f}) = \bot$,
	\item if $\varepsilon_g(j) = 1$, then $g(p_1,\ldots, p\wedge q,\ldots,p_{n_g}) = g(p_1,\ldots, p,\ldots,p_{n_g})\wedge g(p_1,\ldots, q,\ldots,p_{n_g})$; moreover if $g\in \mathcal{G}_n$, then  $g(p_1,\ldots, \top,\ldots,p_{n_g}) = \top$,
	\item if $\varepsilon_g(j) = \partial$, then $g(p_1,\ldots, p\vee q,\ldots,p_{n_g}) = g(p_1,\ldots, p,\ldots,p_{n_g})\wedge g(p_1,\ldots, q,\ldots,p_{n_g})$; moreover if $g\in \mathcal{G}_n$, then  $g(p_1,\ldots, \bot,\ldots,p_{n_g}) = \top$.
\end{itemize}
Each language $\mathcal{L}_\mathrm{DLE}$ is interpreted in the appropriate class of DLEs. In particular, for every DLE $\bbA$, each operation $f^\bbA\in \mathcal{F}^\bbA$ (resp.\ $g^\bba\in \mathcal{G}^\bbA$) is finitely join-preserving (resp.\ meet-preserving) in each coordinate when regarded as a map $f^\bbA: \bba^{\varepsilon_f}\to \bba$ (resp.\ $g^\bba: \bba^{\varepsilon_g}\to \bba$). 

For every DLE $\bbA$, the symbol $\vdash$ is interpreted as the lattice order $\leq$. A sequent $\phi\vdash\psi$ is valid in $\bba$ if $h(\phi)\leq h(\psi)$ for every homomorphism $h$ from the $\mathcal{L}_\mathrm{DLE}$-algebra of formulas over $\mathsf{PROP}$ to $\bba$. The notation $\mathbb{DLE}\models\phi\vdash\psi$ indicates that $\phi\vdash\psi$ is valid in every DLE. Then, by means of a routine Lindenbaum-Tarski construction, it can be shown that the minimal DLE-logic $\mathbf{L}_\mathrm{DLE}$ is sound and complete with respect to its corresponding class of algebras $\mathbb{DLE}$, i.e.\ that any sequent $\phi\vdash\psi$ is provable in $\mathbf{L}_\mathrm{DLE}$ iff $\mathbb{DLE}\models\phi\vdash\psi$. 

\begin{definition}
	\label{def:DLE frame}
	An $\mathcal{L}_{\mathrm{DLE}}$-{\em frame} is a tuple $\mathbb{F} = (\mathbb{X}, \mathcal{R}_{\mathcal{F}}, \mathcal{R}_{\mathcal{G}})$ such that $\mathbb{X} = (W, \leq)$ is a (nonempty) poset, $\mathcal{R}_{\mathcal{F}} = \{R_f\mid f\in \mathcal{F}\}$, and $\mathcal{R}_{\mathcal{G}} = \{R_g\mid g\in \mathcal{G}\}$ such that  for each $f\in \mathcal{F}$, the symbol $R_f$  denotes an $(n_f+1)$-ary  relation on $W$ such that
	for all $\overline{w}, \overline{v}\in  \mathbb{X}^{\eta_f}$,
	\begin{equation}\label{eq:compatibility Rf}
	\mbox{if } R_f(\overline{w})\ \mbox{ and }\ \overline{w}\leq^{\eta_f} \overline{v},\ \mbox{ then }\  R_f(\overline{v}),\end{equation}
	where $\eta_f$ is the order-type on $n_f+1$ defined as follows: $\eta_f(1) = 1$ and $\eta_f(i+1) = \epsilon^\partial_f(i)$ for each $1\leq i\leq n_f$.
	
	Likewise, for each $g\in \mathcal{G}$, the symbol  $R_g$ denotes an $(n_g+1)$-ary relation on $W$ such that
	for all $\overline{w}, \overline{v}\in  \mathbb{X}^{\eta_g}$,
	\begin{equation}\label{eq:compatibility Rg}
	\mbox{if } R_g(\overline{w})\   \mbox{ and }\  \overline{w}\geq^{\eta_g} \overline{v},\ \mbox{ then }\ R_g(\overline{v}),\end{equation}
	where $\eta_g$ is the order-type on $n_g+1$ defined as follows: $\eta_g(1) = 1$ and $\eta_g(i+1) = \epsilon_g^\partial(i)$ for each $1\leq i\leq n_g$.
	
	An $\mathcal{L}_{\mathrm{DLE}}$-{\em model} is a tuple $\mathbb{M} = (\mathbb{F}, V)$ such that $\mathbb{F}$ is an $\mathcal{L}_{\mathrm{DLE}}$-frame, and $V:\mathsf{Prop}\to \mathcal{P}^{\uparrow}(W)$ is a persistent valuation.
\end{definition}

The defining clauses for the interpretation of each $f\in \mathcal{F}$ and $g\in \mathcal{G}$ on $\mathcal{L}_{\mathrm{DLE}}$-models are given as follows: 
\begin{center}
	\begin{tabular}{l l l}
		$\mathbb{M},w \Vdash f(\overline{\phi})$ & $\quad\mbox{ iff }\quad $ & $ \mbox{there exists some } \overline{v} \in W^{n_f} \mbox{ s.t. } R_f(w, \overline{v}) $ \\
    & &$\mbox{and } \mathbb{M}, v_i \Vdash^{\epsilon_f(i)} \phi_i \mbox{ for each } 1\leq i\leq n_f,$\\
		
		$\mathbb{M},w \Vdash g(\overline{\phi})$ & $\quad\mbox{ iff }\quad $ &$ \mbox{for any } \overline{v} \in W^{n_g}, \mbox{ if } R_g(w, \overline{v}) \mbox{ then } \mathbb{M}, v_i \Vdash^{\epsilon_g(i)} \phi_i $
                                                                           \\
   & & $\mbox{for some } 1\leq i\leq n_g,$\\
	\end{tabular}
\end{center}
where $\Vdash^1$ is $\Vdash$ and $\Vdash^\partial$ is $\nVdash$.

\subsection{Sahlqvist and Inductive $\mathcal{L}_\mathrm{DLE}$-inequalities}\label{ssec:Inductive:Ineqs}

In the present subsection, we recall the definitions of {\em Sahlqvist} and {\em inductive} $\mathcal{L}_\mathrm{DLE}$-inequalities (cf.~\cite[Definition 3.4]{ConPal13}), which we will show to be preserved and reflected under suitable GMT translations in Section \ref{sec:Corresp:Via:Trans}. These definitions capture the non-classical counterparts, in each normal modal signature, of the classes of Sahlqvist (\cite{Sa75}) and inductive (\cite{GorankoV06}) formulas. The definition is given in terms of the order-theoretic properties of the interpretation of the logical connectives
(cf.\ \cite{CoGhPa14,ConPal12,ConPal13} for expanded discussions on the design principles of this definition). The fact that this notion applies uniformly across arbitrary normal modal signatures makes it possible to give a very general yet mathematically precise formulation to the question about the transfer of Sahlqvist-type results under GMT translations.

Technically speaking, these definitions are given parametrically in an order type. This contrasts to with the classical case, where the constantly-$1$ order-type is sufficient to encompass all Sahlqvist formulas. As a result, the preservation of the syntactic shape of each of these inequalities requires a GMT translation parametrized by the same order-type. These parametric translations will be introduced in Section \ref{Sec:ParamGMTTrans}.

\begin{definition}[\textbf{Signed Generation Tree}]
	\label{def: signed gen tree}
	The \emph{positive} (resp.\ \emph{negative}) {\em generation tree} of any $\mathcal{L}_\mathrm{DLE}$-term $s$ is defined by labelling the root node of the generation tree of $s$ with the sign $+$ (resp.\ $-$), and then propagating the labelling on each remaining node as follows:
	\begin{itemize}
		\item For any node labelled with $ \lor$ or $\land$, assign the same sign to its children nodes.
		\item For any node labelled with $h\in \mathcal{F}\cup \mathcal{G}$ of arity $n_h\geq 1$, and for any $1\leq i\leq n_h$, assign the same (resp.\ the opposite) sign to its $i$th child node if $\varepsilon_h(i) = 1$ (resp.\ if $\varepsilon_h(i) = \partial$).
	\end{itemize}
	Nodes in signed generation trees are \emph{positive} (resp.\ \emph{negative}) if are signed $+$ (resp.\ $-$).
\end{definition}

Signed generation trees will be mostly used in the context of term inequalities $s\leq t$. In this context we will typically consider the positive generation tree $+s$ for the left-hand side and the negative one $-t$ for the right-hand side.

For any term $s(p_1,\ldots p_n)$, any order type $\epsilon$ over $n$, and any $1 \leq i \leq n$, an \emph{$\epsilon$-critical node} in a signed generation tree of $s$ is a leaf node $+p_i$ with $\epsilon_i = 1$ or $-p_i$ with $\epsilon_i = \partial$. An $\epsilon$-{\em critical branch} in the tree is a branch from an $\epsilon$-critical node. The intuition is that variable occurrences corresponding to $\epsilon$-critical nodes are those that the algorithm ALBA will solve for in the process of eliminating them.

For every term $s(p_1,\ldots p_n)$ and every order type $\epsilon$, we say that $+s$ (resp.\ $-s$) {\em agrees with} $\epsilon$, and write $\epsilon(+s)$ (resp.\ $\epsilon(-s)$), if every leaf in the signed generation tree of $+s$ (resp.\ $-s$) is $\epsilon$-critical.
In other words, $\epsilon(+s)$ (resp.\ $\epsilon(-s)$) means that all variable occurrences corresponding to leaves of $+s$ (resp.\ $-s$) are to be solved for according to $\epsilon$. We will also write $+s'\prec \ast s$ (resp.\ $-s'\prec \ast s$) to indicate that the subterm $s'$ inherits the positive (resp.\ negative) sign from the signed generation tree $\ast s$. Finally, we will write $\epsilon(\gamma) \prec \ast s$ (resp.\ $\epsilon^\partial(\gamma) \prec \ast s$) to indicate that the signed subtree $\gamma$, with the sign inherited from $\ast s$, agrees with $\epsilon$ (resp.\ with $\epsilon^\partial$).
\begin{definition}
	\label{def:good:branch}
	Nodes in signed generation trees will be called \emph{$\Delta$-adjoints}, \emph{syntactically left residual (SLR)}, \emph{syntactically right residual (SRR)}, and \emph{syntactically right adjoint (SRA)}, according to the specification given in Table \ref{Join:and:Meet:Friendly:Table}.
	A branch in a signed generation tree $\ast s$, with $\ast \in \{+, - \}$, is called a \emph{good branch} if it is the concatenation of two paths $P_1$ and $P_2$, one of which may possibly be of length $0$, such that $P_1$ is a path from the leaf consisting (apart from variable nodes) only of PIA-nodes, and $P_2$ consists (apart from variable nodes) only of Skeleton-nodes. A good branch in which the nodes in $P_1$ are all SRA is called {\em excellent}.
	
	\begin{table}[h]
		\begin{center}
			\begin{tabular}{| c | c |}
				\hline
				Skeleton &PIA\\
				\hline
				$\Delta$-adjoints & SRA \\
				\begin{tabular}{ c c c c c c}
					$+$ &$\vee$ &$\wedge$ &$\phantom{\lhd}$ & &\\
					$-$ &$\wedge$ &$\vee$\\
					\hline
				\end{tabular}
				&
				\begin{tabular}{c c c c }
					$+$ &$\wedge$ &$g$ & with $n_g = 1$ \\
					$-$ &$\vee$ &$f$ & with $n_f = 1$ \\
					\hline
				\end{tabular}
				\\
				SLR &SRR\\
				\begin{tabular}{c c c c }
					$+$ & $\wedge$ &$f$ & with $n_f \geq 1$\\
					$-$ & $\vee$ &$g$ & with $n_g \geq 1$ \\
				\end{tabular}
				&\begin{tabular}{c c c c}
					$+$ &$\vee$ &$g$ & with $n_g \geq 2$\\
					$-$ & $\wedge$ &$f$ & with $n_f \geq 2$\\
				\end{tabular}
				\\
				\hline
			\end{tabular}
		\end{center}
		\caption{Skeleton and PIA nodes for $\mathrm{DLE}$.}\label{Join:and:Meet:Friendly:Table}
		\vspace{-1em}
	\end{table}
\end{definition}

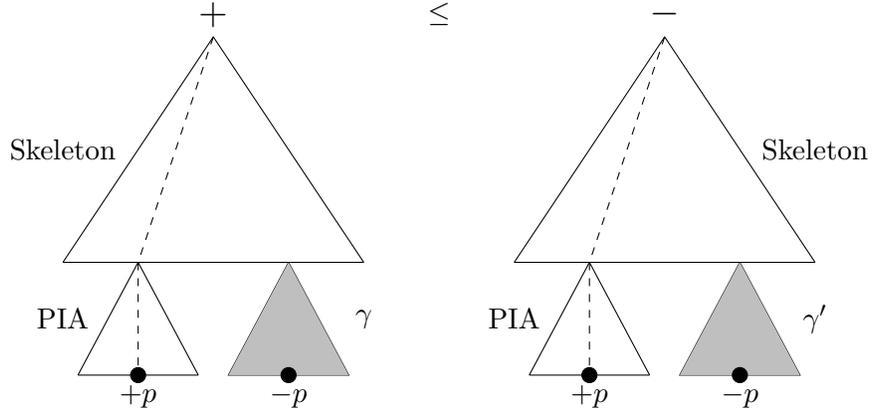
\begin{figure*}
	\begin{center}
		\begin{tikzpicture}
		\draw (-5,-1.5)   -- (-3,1.5) node[above]{\Large$+$} ;
		\draw (-5,-1.5) -- (-1,-1.5) ;
		\draw (-3,1.5) -- (-1,-1.5);
		\draw (-5,0) node{Skeleton}  ;
		\draw[dashed] (-3,1.5) -- (-4,-1.5);
		\draw (-4,-1.5) --(-4.8,-3);
		\draw (-4.8,-3) --(-3.2,-3);
		\draw (-3.2,-3) --(-4,-1.5);
		\draw[dashed] (-4,-1.5) -- (-4,-3);
		\draw[fill] (-4,-3) circle[radius=.1] node[below]{$+p$};
		
		\draw
		(-2,-1.5) -- (-2.8,-3) -- (-1.2,-3) -- (-2,-1.5);
		\fill[lightgray]
		(-2,-1.5) -- (-2.8,-3) -- (-1.2,-3);
		\draw (-1,-2.25)node{$\gamma$};
		\draw (-5,-2.25) node{PIA}  ;
		\draw (0,1.8) node{$\leq$};
		\draw (5,-1.5)   -- (3,1.5) node[above]{\Large$-$} ;
		\draw (5,-1.5) -- (1,-1.5) ;
		\draw (3,1.5) -- (1,-1.5);
		\draw (5,0) node{Skeleton}  ;
		\draw[dashed] (3,1.5) -- (2,-1.5);
		\draw (2,-1.5) --(2.8,-3);
		\draw (2.8,-3) --(1.2,-3);
		\draw (1.2,-3) --(2,-1.5);
		\draw[dashed] (2,-1.5) -- (2,-3);
		\draw[fill] (2,-3) circle[radius=.1] node[below]{$+p$};
		\draw
		(4,-1.5) -- (4.8,-3) -- (3.2,-3) -- (4, -1.5);
		\fill[lightgray]
		(4,-1.5) -- (4.8,-3) -- (3.2,-3) -- (4, -1.5);
		\draw (5,-2.25)node{$\gamma'$};
		\draw (1,-2.25) node{PIA} ;
		
		\draw[fill] (-2,-3) circle[radius=.1] node[below]{$-p$};
		\draw[fill] (4,-3) circle[radius=.1] node[below]{$-p$};
		\end{tikzpicture}
		\caption{A schematic representation of inductive inequalities.}
	\end{center}
\end{figure*}

\begin{definition}[Sahlqvist and Inductive inequalities]
	\label{Inducive:Ineq:Def}
	For any order type $\epsilon$, the signed generation tree $*s$ $(* \in \{-, + \})$ of a term $s(p_1,\ldots p_n)$ is \emph{$\epsilon$-Sahlqvist} if
	for all $1 \leq i \leq n$, every $\epsilon$-critical branch with leaf $p_i$ is excellent (cf.\ Definition \ref{def:good:branch}). An inequality $s \leq t$ is \emph{$\epsilon$-Sahlqvist} if the signed generation trees $+s$ and $-t$ are $\epsilon$-Sahlqvist. An inequality $s \leq t$ is \emph{Sahlqvist} if it is $\epsilon$-Sahlqvist for some  $\epsilon$.
	
	For any order type $\epsilon$ and any irreflexive and transitive relation $<_\Omega$ on $p_1,\ldots p_n$, the signed generation tree $*s$ $(* \in \{-, + \})$ of a term $s(p_1,\ldots p_n)$ is \emph{$(\Omega, \epsilon)$-inductive} if
	\begin{enumerate}
		\item for all $1 \leq i \leq n$, every $\epsilon$-critical branch with leaf $p_i$ is good (cf.\ Definition \ref{def:good:branch});
		\item every $m$-ary SRR-node occurring in the critical branch is of the form $ \circledast(\gamma_1,\dots,\gamma_{j-1},\beta,\gamma_{j+1}\ldots,\gamma_m)$, where for any $h\in\{1,\ldots,m\}\setminus j$: 
		\begin{enumerate}
			\item $\epsilon^\partial(\gamma_h) \prec \ast s$ (cf.\ discussion before Definition \ref{def:good:branch}), and
			%
			\item $p_k <_{\Omega} p_i$ for every $p_k$ occurring in $\gamma_h$ and for every $1\leq k\leq n$.
		\end{enumerate}

	\end{enumerate}
	
	We will refer to $<_{\Omega}$ as the \emph{dependency order} on the variables. An inequality $s \leq t$ is \emph{$(\Omega, \epsilon)$-inductive} if the signed generation trees $+s$ and $-t$ are $(\Omega, \epsilon)$-inductive. An inequality $s \leq t$ is \emph{inductive} if it is $(\Omega, \epsilon)$-inductive for some $<_\Omega$ and $\epsilon$.
\end{definition}
The definition above specializes so as to account for all the settings in which GMT translations have been defined. Below we expand on a selection of these.

\begin{example}[Intuitionistic language]\label{ex: frege ineq}
	
	As observed in \cite{ConPal12}, the Frege inequality
	
	\[p\rightarrow(q\rightarrow r)\leq (p\rightarrow q)\rightarrow (p\rightarrow r)\] 
	
	\noindent is not Sahlqvist for any order type, but is $(\Omega, \epsilon)$-inductive, e.g.\ for $r <_\Omega p <_\Omega q$ and $\epsilon(p, q, r) =(1, 1, \partial)$,  as can be seen from the signed generation trees below: 

	\begin{center}
		\begin{multicols}{3}
			\begin{tikzpicture}
			\tikzstyle{level 1}=[level distance=1cm, sibling distance=2.5cm]
			\tikzstyle{level 2}=[level distance=1cm, sibling distance=1.5cm]
			\tikzstyle{level 3}=[level distance=1 cm, sibling distance=1.5cm]
			\node[PIA] {$+\rightarrow$}
			child{node{$-p$}}
			child{node[PIA]{$+\rightarrow$}
				child{node{$-q$}}
				child{node{$+r$}}}
			;
			
			\end{tikzpicture}
			
			\columnbreak
			$\leq$
			\columnbreak
			\begin{tikzpicture}
			\tikzstyle{level 1}=[level distance=1cm, sibling distance=2.5cm]
			\tikzstyle{level 2}=[level distance=1cm, sibling distance=1.5cm]
			\tikzstyle{level 3}=[level distance=1 cm, sibling distance=1.5cm]
			\node[Ske]{$-\rightarrow$}
			child{node[PIA]{$+\rightarrow$}
				child{node{$-p$}}
				child{node{\circled{$+q$}}}}
			child{node[Ske]{$-\rightarrow$}
				child{node{\circled{$+p$}}}
				child{node{\circled{$-r$}}}}
			;
			\end{tikzpicture}
		\end{multicols}
	\end{center}
	In the picture above, the circled variable occurrences are the $\epsilon$-critical ones, the nodes in double ellipses are Skeleton, and those in single ellipses are PIA. 		
\end{example}

\begin{example}[Bi-intuitionistic language]\label{ex: non inductive} In \cite[Section 4]{rauszer1974semi}, Rauszer  axiomatizes bi-intuitionistic logic considering the following  axioms among others, which we present in the form of inequalities:
	\[r\pdra (q\pdra p)\leq (p\vee q)\pdra p\quad\quad (q\pdra p)\rightarrow \bot\leq p\rightarrow q.\]
	The first inequality is not $(\Omega, \epsilon)$-inductive for any $\Omega$ and $\epsilon$; indeed, in the negative generation tree of $(p\vee q)\pdra p$, the variable  $p$ occurs in both subtrees rooted at the children of the root, which is  a binary SRR node, making it impossible to satisfy condition 2(b) of Definition \ref{Inducive:Ineq:Def} for any order-type $\epsilon$ and strict ordering $\Omega$.
	
	The second inequality is $\epsilon$-Sahlqvist for $\epsilon (p) = 1$ and $\epsilon (q) = \partial$, and is also $(\Omega, \epsilon)$-inductive but not Sahlqvist for $q<_{\Omega} p$ and $\epsilon(p) = \epsilon(q) = \partial$. It is  also $(\Omega, \epsilon)$-inductive but not Sahlqvist for $p<_{\Omega} q$ and $\epsilon(p) = \epsilon(q) = 1$.
\end{example} 	

\begin{example}[Intuitionistic  bi-modal language]
	The following Fischer Servi inequalities (cf.~\cite{fischerservi1977modal})
	\[\Diamond(q\rightarrow p)\leq \Box q\rightarrow \Diamond p \quad \quad \Diamond q\rightarrow \Box p\leq \Box (q\rightarrow p),\]
	are both $\varepsilon$-Sahlqvist for $\varepsilon(p) = \partial$ and $\varepsilon(q) = 1$, and are also both  $(\Omega, \varepsilon)$-inductive but not Sahlqvist for $p<_{\Omega} q$ and $\varepsilon(p) = \partial$ and $\varepsilon(q) = \partial$.
\end{example}

\begin{example}[Distributive modal  language]
	The following inequalities are key to Dunn's positive modal logic \cite{dunn1995positive}, the language of which is the $\{\lhd, \rhd\}$-free fragment of the language of Distributive Modal Logic \cite{GeNaVe05}:
	\[\Box q\wedge \Diamond p\leq \Diamond(q\wedge p) \quad \quad  \Box (q\vee p)\leq  \Diamond q\vee \Box p.\]
	The inequality on the left (resp.~right) is  $\varepsilon$-Sahlqvist for $\varepsilon(p) = \varepsilon(q) = 1$ (resp.~$\varepsilon(p) = \varepsilon(q) = \partial$), and is  $(\Omega, \varepsilon)$-inductive but not Sahlqvist for $p<_{\Omega} q$ and $\varepsilon(p) = 1$ and $\varepsilon(q) = \partial$ (resp.~$p<_{\Omega} q$ and $\varepsilon(p) = \partial$ and $\varepsilon(q) = 1$).
\end{example}

\section{The GMT translations}\label{semantic environment}

In this section we give a brief overview of some highlights in the history of the G\"{o}del-McKinsey-Tarski translation, its extensions and variations and the uses to which they have been put. We then recall the technical details of the translation and how its is founded upon the interplay of persistent and arbitrary valuations in intuitionistic Kripke frames.

\subsection{Brief history}
\label{ssec:brief history}
The GMT translation originates in the work of G\"{o}del \cite{Godel:Interp} and its algebraic underpinnings were analysed by McKinsey and Tarski in \cite{mckinsey1944algebra, mckinsey1946closed}. In particular, in \cite{mckinsey1944algebra} develops the theory of closure algebras, i.e.\ S4-algebras, while \cite{mckinsey1946closed} shows that every Brouwerian algebra embeds as the subalgebra of closed subsets of some closure algebra. In \cite{McKinseyTarski:Translation} this analysis is used to show that the GMT translation and a number of variations are full and faithful. This result is extended by Dummett and Lemmon \cite{dummett1959modal} in 1959 to all intermediate logics. In 1967 Grzegorczyk \cite{grzegorczyk1967some} showed that Dummett and Lemmon's result also holds if one replaces normal extensions of S4 with normal extension of what is now known as Grzegorczyk's logic. In light these developments, Maksimova and Rybakov \cite{maksimova1974lattice} launched a systematic study of the relationship between the lattice of intermediate logics and that of normal extensions of S4.

In 1976 Blok \cite{blok1976varieties} and Esakia \cite{esakia1976modal} independently built on this work to establish an isomorphism, based the GMT translation, between the lattice of intermediate logics and the lattice of normal extensions of Grzegorczyk's logic.

Between 1989 and 1992, several theorems are proven by Zakharyaschev and Chagrov about the preservation under GMT-based translations of properties such as decidability, finite model property, Kripke and Hallden completeness, disjunction property, and compactness. For detailed surveys on this line of research, the reader is referred to \cite{chagrov1992modal} and \cite{wolter2014blok}.

The first extension of the GMT translation to modal expansions of intuitionistic logic is introduced by Fischer Servi \cite{fischerservi1977modal} in 1977. Between 1979 and 1984 Shehtman and Sotirov also published on extensions (cf.\ discussions in \cite{chagrov1992modal, wolter2014blok}).

In the mid 90s, building on the work of Fischer Servi and Shehtman, Wolter and Zakharyaschev \cite{WoZa98} use an extension of the GMT translation to prove the transfer of a number of results, including finite model property, canonicity, decidability, tabularity and Kripke completeness, for intuitionistic modal logic with box only. In Remark \ref{rmk: comparison canonicity} we will expand on the relationship of their canonicity results (cf.\ \cite[Theorem 12]{WoZa98}) with those of the present paper. In \cite{WoZa97} this line of work is extended to intuitionistic modal logics with box and diamond. Duality theory is developed and the transfer of decidability, finite model property and tabularity under a suitably extended GMT translation is established. Moreover, a Blok-Esakia theorem is proved in this setting.

This research programme is further pursued by Wolter \cite{wolter1998CoImplication} in the setting bi-intuitionistic modal logic, obtained by adding the left residual of the disjunction (also known as co-implication) to the language of intuitionistic modal logic. He develops duality theory for these logics, extends the GMT translation and established a Blok-Esakia theorem.  Recent developments include work by G.\ Bezhanishvili who considers expansions of Prior's MIPC with  universal modalities, extends the GMT translation and establishes a Blok-Esakia theorem.

As the outline above shows, the research program on the transfer of results via GMT translations includes many transfers of completeness and canonicity (the latter in the form of d-persistence) which are closely related to the focus of the present paper. However, there are very few transfer results which specifically concern Sahlqvist theory, and this for the obvious reason that the formulation of such results depends on the availability of an independent definition of Sahlqvist formulas for each setting of (modal expansions of) non-classical logics, and these definitions have not been introduced in any of these settings, neither together nor in isolation, until 2005, when Gehrke, Nagahashi and Venema \cite{GeNaVe05} introduced the notion of Sahlqvist inequalities in the language of Distributive Modal Logic (DML).\footnote{
	We refer to \cite{CoGhPa14, ConPal13} for a systematic comparison between \cite[Definition 3.4]{GeNaVe05} and the definition of Sahlqvist and inductive inequalities for normal DLE-languages (cf.~Definition \ref{Inducive:Ineq:Def}).} This definition made it possible to formulate the  Sahlqvist correspondence theorem for DML-inequalities (cf.~\cite[Theorem 3.7]{GeNaVe05}) and prove it via reduction to a suitable classical poly-modal logic using GMT translations.

We conclude this brief survey by mentioning a recent paper by van Benthem, N.\ Bezhanishvili and Holliday \cite{vBBeHo16} which studies a GMT-like translation from modal logic with possibility semantics into bi-modal logic. They prove that this translation transfers and reflects first-order correspondence,  but point out that it destroys the Sahlqvist shape of formulas in all but a few special cases.

\subsection{Semantic analysis}\label{ssec: sem analysis godel tarski}
In the present section we recall the definition of the GMT translation in its original setting, highlight its basic semantic underpinning as a toggle between persistent and non-persistent valuations on S4-frames. This analysis will be extended  to a uniform account of the GMT translation for arbitrary normal DLE-logics in the next section.

In what follows, for any partial order $(W,\leq)$, we let $w{\uparrow}: = \{v\in W\mid w\leq v\}$, $w{\downarrow}: = \{v\in W\mid w\geq v\}$ for every $w\in W$, and for every $X\subseteq W$, we let $X{\uparrow}: = \bigcup_{x\in X} x{\uparrow}$ and $X{\downarrow}: = \bigcup_{x\in X} x{\downarrow}$. {\em Up-sets} (resp.\ {\em down-sets}) of $(W, \leq)$ are subsets $X\subseteq W$ such that $X = X{\uparrow}$ (resp.\ $X = X{\downarrow}$). We denote by $\mathcal{P}(W)$ the Boolean algebra of subsets of $W$, and by $\mathcal{P}^{\uparrow}(W)$ (resp.\ $\mathcal{P}^{\downarrow}(W)$) the (bi-)Heyting algebra of up-sets (resp.\ down-sets) of $(W, \leq)$. Finally we let $X^c$ denote the relative complement $W\setminus X$ of every subset $X\subseteq W$.

Fix a denumerable set $\mathsf{Prop}$ of propositional variables. The language of intuitionistic logic over $\mathsf{Prop}$ is given by
\[\mathcal{L}_I\ni\phi:: = p\mid \bot\mid \top\mid \phi\wedge \phi\mid \phi\vee\phi\mid \phi\rightarrow \phi.\]
The language of the normal modal logic S4 over $\mathsf{Prop}$ is given by
\[\mathcal{L}_{S4\Box}\ni\alpha:: = p\mid \bot \mid \alpha\vee \alpha\mid\alpha\wedge \alpha \mid\neg\alpha\mid \Box_{\leq}\alpha.\]

The GMT translation is the map $\tau\colon \mathcal{L}_I\rightarrow \mathcal{L}_{S4\Box}$ defined by the following recursion:
\begin{center}
	\begin{tabular}{r c l}
		$\tau (p)$ &  = & $\Box_{\leq} p$\\
		$\tau (\bot)$ &  = & $\bot$ \\
		$\tau (\top)$ &  = & $\top$ \\
		$\tau (\phi\wedge \psi)$ &  = & $\tau (\phi)\wedge \tau(\psi)$  \\
		$\tau (\phi\vee \psi)$ &  = & $\tau (\phi)\vee \tau(\psi)$  \\
		$\tau (\phi \rightarrow \psi)$ &  = & $\Box_{\leq}(\neg\tau (\phi)\vee \tau(\psi))$.  \\
	\end{tabular}
\end{center}


Both intuitionistic and S4-formulas can be interpreted on partial orders $\mathbb{F}  = (W, \leq)$, as follows: an S4-model is a tuple $(\mathbb{F}, U)$ where $U: \mathsf{Prop}\to \mathcal{P}(W)$ is a valuation. The interpretation $\Vdash^*$ of S4-formulas on S4-models is defined recursively as follows: for an $w\in W$,
\begin{center}
	\begin{tabular}{l l l}
		$\mathbb{F}, w, U \Vdash^* p$ &$\quad$& iff $p\in U(p)$\\
		$\mathbb{F}, w, U \Vdash^* \bot $ && never\\
		$\mathbb{F}, w, U \Vdash^* \top $ && always\\
		$\mathbb{F}, w, U \Vdash^* \alpha\wedge \beta$ &&iff $\ \mathbb{F}, w, U \Vdash^* \alpha\ $ and $\ \mathbb{F}, w, U \Vdash^* \beta$\\
		$\mathbb{F}, w, U \Vdash^* \alpha\vee \beta$ &&iff $\ \mathbb{F}, w, U \Vdash^* \alpha\ $ or $\ \mathbb{F}, w, U \Vdash^* \beta$\\
		$\mathbb{F}, w, U \Vdash^* \neg \alpha $ &&iff  $\ \mathbb{F}, w, U \not\Vdash^* \alpha\ $ \\
		$\mathbb{F}, w, U \Vdash^* \Box_\leq\alpha$ && iff   $\ \mathbb{F}, v, U \Vdash^* \alpha\ $ for any $v\in w{\uparrow}$.
	\end{tabular}
\end{center}
For any S4-formula $\alpha$ we let $\valb{\alpha}_U: = \{ w\mid \mathbb{F}, w, U\Vdash^*\alpha\}$. It is not difficult to verify that for every $\alpha\in \mathcal{L}_{S4}$ and any valuation $U$,
\begin{equation}\label{eq:extension of box}
\valb{\Box_\leq \alpha}_U = \valb{\alpha}_U^c{\downarrow}^c.
\end{equation}

An intuitionistic model is a tuple $(\mathbb{F}, V)$ where $V: \mathsf{Prop}\to \mathcal{P}^{\uparrow}(W)$ is a {\em persistent} valuation. The interpretation $\Vdash^*$ of S4-formulas on S4-models is defined recursively as follows: for an $w\in W$,
\begin{center}
	\begin{tabular}{l l l}
		$\mathbb{F}, w, V \Vdash p$ &$\quad$& iff $p\in V(p)$\\
		$\mathbb{F}, w, V \Vdash \bot \qquad$ &$\quad$& never\\
		$\mathbb{F}, w, V \Vdash \top \qquad$ &$\quad$& always\\
		$\mathbb{F}, w, V \Vdash \phi\wedge \psi$ &$\quad$& iff $\ \mathbb{F}, w, V \Vdash \phi\ $ and $\ \mathbb{F}, w, V \Vdash \psi$\\
		$\mathbb{F}, w, V \Vdash \phi\vee \psi$  &$\quad$& iff $\ \mathbb{F}, w, V \Vdash \phi\ $ or $\ \mathbb{F}, w, V \Vdash \psi$\\
		$\mathbb{F}, w, V \Vdash \phi \rightarrow \psi$ &$\quad$& iff  either  $\ \mathbb{F}, v, V \not\Vdash \phi\ $ or  $\ \mathbb{F}, v, V \Vdash \psi$ for any $v\in w{\uparrow}$.\\
	\end{tabular}
\end{center}
For any intuitionistic formula $\phi$ we let $\val{\phi}_V: = \{ w\mid \mathbb{F}, w, V\Vdash\phi\}$. It is not difficult to verify that for all $\phi, \psi\in \mathcal{L}_{I}$ and any persistent valuation $V$,
\begin{equation} \label{eq: extension of rightarrow}
\val{\phi\rightarrow \psi}_V = (\val{\phi}_V^c\cup \val{\psi}_V)^c{\downarrow}^c.
\end{equation}
Clearly, every persistent valuation $V$ on $\mathbb{F}$ is also a valuation on $\mathbb{F}$. Moreover, for every valuation $U$ on $\mathcal{F}$, the assignment mapping every $p\in \mathsf{Prop}$ to $U(p)^c{\downarrow}^c$ defines a persistent valuation $U^{\uparrow}$ on $\mathbb{F}$.
The main semantic property of the GMT translation is stated in the following well-known proposition:

\begin{prop}\label{fact: main property goedel transl}
	For every intuitionistic formula $\phi$ and every partial order $\mathbb{F}  = (W, \leq)$,
	\[\mathbb{F} \Vdash \phi \quad \mbox{ iff } \quad \mathbb{F} \Vdash^* \tau(\phi).\]
\end{prop}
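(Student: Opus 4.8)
The plan is to prove the biconditional by establishing the stronger pointwise claim that for every persistent valuation $V$ on $\mathbb{F}$ and every $w \in W$,
\[\mathbb{F}, w, V \Vdash \phi \quad \mbox{ iff } \quad \mathbb{F}, w, V \Vdash^* \tau(\phi),\]
where on the right-hand side $V$ is regarded as an (ordinary) valuation, which is legitimate since every persistent valuation is in particular a valuation. Frame-validity $\mathbb{F} \Vdash \phi$ means truth at every world under every persistent valuation, and likewise $\mathbb{F} \Vdash^* \tau(\phi)$ means truth at every world under every valuation; the subtlety to address is that the two notions of validity quantify over different classes of valuations (persistent versus arbitrary). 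So after the pointwise claim I would need a short argument reconciling these, using the toggle between $U$ and $U^{\uparrow}$ recorded just before the proposition.

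First I would prove the pointwise claim by induction on the structure of $\phi$. The base cases $p$, $\bot$, $\top$ and the inductive cases for $\wedge$ and $\vee$ are immediate: the clause $\tau(p) = \Box_{\leq} p$ unwinds, via the semantics of $\Box_\leq$ and the persistence of $V$, to $p \in V(p)$ (persistence is exactly what guarantees $\mathbb{F}, w, V \Vdash^* \Box_\leq p$ iff $p \in V(p)$, since $V(p)$ is an up-set), and $\tau$ commutes with $\wedge, \vee$ while the intuitionistic and S4 clauses for these connectives coincide. The only genuinely non-trivial case is implication. Here I would compute, using the identity \eqref{eq:extension of box} for the extension of $\Box_\leq$ together with the induction hypothesis applied to $\phi$ and $\psi$, that the S4-extension of $\tau(\phi \to \psi) = \Box_{\leq}(\neg \tau(\phi) \vee \tau(\psi))$ equals $(\valb{\tau(\phi)}_V^c \cup \valb{\tau(\psi)}_V)^c{\downarrow}^c$, and then match this against the intuitionistic identity \eqref{eq: extension of rightarrow} for $\val{\phi \to \psi}_V$. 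Since the induction hypothesis gives $\valb{\tau(\phi)}_V = \val{\phi}_V$ and $\valb{\tau(\psi)}_V = \val{\psi}_V$, the two extensions coincide, closing the induction.

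To pass from the pointwise claim to the statement, I would argue both directions of the frame-validity biconditional. For the forward direction, suppose $\mathbb{F} \Vdash \phi$; I must show $\mathbb{F} \Vdash^* \tau(\phi)$, i.e.\ that $\tau(\phi)$ is true at every world under \emph{every} valuation $U$, not merely under persistent ones. The key observation is that $\tau(\phi)$ has all its propositional variables guarded by $\Box_\leq$, so its value depends only on the up-closed modification of $U$; more precisely, one checks $\valb{\tau(\phi)}_U = \valb{\tau(\phi)}_{U^{\uparrow}}$, where $U^{\uparrow}$ is the persistent valuation sending each $p$ to $U(p)^c{\downarrow}^c = \valb{\Box_\leq p}_U$. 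Applying the pointwise claim with the persistent valuation $U^{\uparrow}$ and using $\mathbb{F} \Vdash \phi$ then yields truth of $\tau(\phi)$ under $U$. The converse direction is easier: if $\mathbb{F} \Vdash^* \tau(\phi)$, then in particular $\tau(\phi)$ is true under every persistent valuation $V$, so the pointwise claim immediately gives $\mathbb{F}, w, V \Vdash \phi$ for all $w$ and all persistent $V$, which is exactly $\mathbb{F} \Vdash \phi$.

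The main obstacle I anticipate is the implication case combined with this valuation-class mismatch. Verifying that the $\Box_\leq$-guarding of variables makes $\valb{\tau(\phi)}_U$ insensitive to the difference between $U$ and $U^{\uparrow}$ requires a small auxiliary induction (or a lemma) showing that every $\tau$-image takes the same value under $U$ and $U^{\uparrow}$; the heart of this is again the identity \eqref{eq:extension of box}, which encodes that $\Box_\leq$ always produces an up-set and reads only the up-closure of its argument's extension. Once the bookkeeping with $(\cdot)^c{\downarrow}^c$ is set up cleanly via \eqref{eq:extension of box} and \eqref{eq: extension of rightarrow}, everything else is routine.
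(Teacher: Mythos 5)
Your proof is correct and follows the same overall strategy as the paper's: an induction establishing extension-level identities, followed by the toggle $U\mapsto U^{\uparrow}$ to reconcile arbitrary with persistent valuations. The one place where you genuinely diverge is in how the second half is factored. The paper's Lemma \ref{lemma: main property goedel transl} proves two statements by separate inductions: (1) $\val{\phi}_V = \valb{\tau(\phi)}_V$ for persistent $V$ (your pointwise claim), and (2) $\valb{\tau(\phi)}_U = \val{\phi}_{U^{\uparrow}}$ for arbitrary $U$, a second induction that again mixes the intuitionistic and classical semantics. You instead prove the auxiliary identity $\valb{\tau(\phi)}_U = \valb{\tau(\phi)}_{U^{\uparrow}}$ --- a statement entirely internal to the S4 semantics, resting in the base case on the idempotence of $(\cdot)^c{\downarrow}^c$ --- and then recover (2) by composing with (1) applied to the persistent valuation $U^{\uparrow}$. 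The two routes are logically equivalent and of comparable length; yours isolates the $\Box_\leq$-guardedness of translated formulas as a self-contained classical fact, which is conceptually clean, whereas the paper's formulation of (2) is the one that generalizes directly to condition (b) of Proposition \ref{prop: main prop of Godel tarski algebraically} and its algebraic instantiations later in the paper. There is no gap: the auxiliary induction you anticipate goes through for all connectives, since $\tau$ commutes with $\wedge,\vee$ and the implication clause is itself $\Box_\leq$-guarded.
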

\begin{proof}
	If $\mathbb{F} \not\Vdash \phi$, then $\mathbb{F}, w, V \not\Vdash \phi$ for some persistent valuation $V$ and $w\in W$. That is, $w\notin \val{\phi}_V = \valb{ \tau(\phi)}_V$, the last identity holding by item 1 of Lemma \ref{lemma: main property goedel transl}. Hence, $\mathbb{F}, w, V\not\Vdash^*\tau(\phi)$, i.e.\
	$\mathbb{F} \not\Vdash^* \tau(p)$. Conversely, if $\mathbb{F} \not\Vdash^* \tau(\phi)$, then $\mathbb{F}, w, U \not\Vdash \tau(\phi)$ for some valuation $U$ and $w\in W$. That is, $w\notin \valb{\tau(\phi)}_U =  \val{\phi}_{U^{\uparrow}}$, the last identity holding by item 2 of Lemma \ref{lemma: main property goedel transl}. Hence, $\mathbb{F}, w, U^{\uparrow}\not\Vdash \phi$, yielding $\mathbb{F} \not\Vdash \phi$.
\end{proof}

\begin{lemma}\label{lemma: main property goedel transl}
	For every intuitionistic formula $\phi$ and every partial order $\mathbb{F}  = (W, \leq)$,
	\begin{enumerate}
		\item $\val{\phi}_V = \valb{\tau(\phi)}_V$ for every persistent valuation $V$ on $\mathbb{F}$;
		\item $\valb{\tau(\phi)}_U = \val{\phi}_{U^{\uparrow}}$ for every valuation $U$ on $\mathbb{F}$.
	\end{enumerate}
\end{lemma}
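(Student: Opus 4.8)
The plan is to prove both items simultaneously by induction on the structure of the intuitionistic formula, proving item~1 and item~2 in parallel at each step. Before starting the induction, I would record the single order-theoretic observation that drives the whole argument: for any $X\subseteq W$ the set $X^c{\downarrow}^c$ equals $\{w\in W\mid w{\uparrow}\subseteq X\}$, i.e.\ the largest up-set contained in $X$; in particular $X^c{\downarrow}^c = X$ whenever $X$ is already an up-set. Combined with \eqref{eq:extension of box}, this gives at once $\valb{\Box_\leq p}_U = \valb{p}_U^c{\downarrow}^c = U(p)^c{\downarrow}^c = U^{\uparrow}(p)$, using the very definition of $U^{\uparrow}$, so that the content of both items is already visible at the atomic level.

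For the base cases, when the formula is a variable $p$, item~1 follows from $\valb{\tau(p)}_V = \valb{\Box_\leq p}_V = V(p)^c{\downarrow}^c = V(p) = \val{p}_V$, where the penultimate equality uses persistence of $V$ (so $V(p)$ is an up-set, hence fixed by $(\cdot)^c{\downarrow}^c$), and item~2 follows from $\valb{\tau(p)}_U = U(p)^c{\downarrow}^c = U^{\uparrow}(p) = \val{p}_{U^{\uparrow}}$. The constants $\bot,\top$ are immediate, since $\tau$ fixes them and the two satisfaction relations agree on them. The conjunction and disjunction cases are routine: since $\tau$ commutes with $\wedge$ and $\vee$ and both semantics interpret these as intersection and union, each identity reduces directly to the induction hypothesis applied to the immediate subformulas.

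The only case demanding genuine work---and the only place where the specific clause $\tau(\phi\rightarrow\psi)=\Box_\leq(\neg\tau(\phi)\vee\tau(\psi))$ is used---is implication. Writing the formula as $\phi\rightarrow\psi$, I would first record the purely S4-side identity $\valb{\tau(\phi\rightarrow\psi)}_U = (\valb{\tau(\phi)}_U^c\cup\valb{\tau(\psi)}_U)^c{\downarrow}^c$, valid for every valuation $U$ (and so in particular for every persistent $V$, since persistent valuations are valuations), obtained by unwinding $\neg$ and $\vee$ and then applying \eqref{eq:extension of box} to the outer $\Box_\leq$. For item~1 I read this identity at $V$ and substitute the induction hypotheses $\valb{\tau(\phi)}_V=\val{\phi}_V$ and $\valb{\tau(\psi)}_V=\val{\psi}_V$, matching the outcome against \eqref{eq: extension of rightarrow}; for item~2 I read it at $U$ and substitute $\valb{\tau(\phi)}_U=\val{\phi}_{U^{\uparrow}}$ and $\valb{\tau(\psi)}_U=\val{\psi}_{U^{\uparrow}}$, again matching \eqref{eq: extension of rightarrow}, now read at the persistent valuation $U^{\uparrow}$. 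The main obstacle is precisely to see that the operation $(\cdot)^c{\downarrow}^c$ produced by translating $\rightarrow$ through $\Box_\leq(\neg(\cdot)\vee(\cdot))$ coincides exactly with the $(\cdot)^c{\downarrow}^c$ appearing in the Heyting semantics of $\rightarrow$ in \eqref{eq: extension of rightarrow}; once \eqref{eq:extension of box} and \eqref{eq: extension of rightarrow} are in hand this is a direct match, so the difficulty is bookkeeping rather than conceptual.
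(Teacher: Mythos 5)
Your proposal is correct and follows essentially the same route as the paper: induction on $\phi$ with the base case handled via \eqref{eq:extension of box} plus persistence (resp.\ the definition of $U^{\uparrow}$), and the implication step reduced to matching $(\cdot)^c{\downarrow}^c$ from the translated $\Box_\leq(\neg\cdot\vee\cdot)$ against \eqref{eq: extension of rightarrow}. The only cosmetic differences are that you run the two items as a single simultaneous induction where the paper gives two parallel inductions, and that you make explicit the observation that $X^c{\downarrow}^c$ is the largest up-set contained in $X$, which the paper uses implicitly.
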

\begin{proof}
	1. By induction on $\phi$. As for the base case, let $\phi: = p\in \mathsf{Prop}$. Then, for any persistent valuation $V$,
	\begin{center}
		\begin{tabular}{r c l l}
			$\val{p}_V $&$=$&$V(p)$ & (def.\ of $\val{\cdot}_V$)\\
			&$=$&$ V(p)^c{\downarrow}^c$ & ($V$ persistent)\\
			&$=$&$\valb{ \Box_{\leq} p}_V$& (equation \eqref{eq:extension of box})\\
			&$=$&$\valb{\tau(p)}_V$, & (def.\ of $\tau$)\\
		\end{tabular}
	\end{center}
	as required. As for the inductive step, let $\phi: = \psi\rightarrow \chi$. Then, for any persistent valuation $V$,
	\begin{center}
		\begin{tabular}{r c l l}
			$\val{\psi \rightarrow \chi}_V$&$=$&$(\val{\psi}_V^c \cup \val{\chi}_V)^c {\downarrow} ^c$& (equation \eqref{eq:extension of box})\\
			&$=$&$(\valb{\tau(\psi)}_V^c \cup \valb{\tau(\chi)}_V)^c {\downarrow}^c$& (induction hypothesis)\\
			&$=$&$\valb{\Box_\leq(\neg \tau(\psi)\vee \tau(\chi))}_V$& (equation \eqref{eq:extension of box}, def.\ of $\valb{\cdot}_V$)\\
			&$=$&$\valb{\tau(\psi\rightarrow \chi)}_V,$ & (def.\ of $\tau$)\\
		\end{tabular}
	\end{center}
	as required. The remaining cases are omitted.
	
	2. By induction on $\phi$. As for the base case, let $\phi: = p\in \mathsf{Prop}$.  Then, for any valuation $U$,
	\begin{center}
		\begin{tabular}{r c l l}
			$\valb{\tau(p)}_U$&$=$&$\valb{\Box_{\leq} p}_U$& (def.\ of $\tau$)\\
			&$=$&$\valb{p}_U ^c{\downarrow}^c$ & (equation \eqref{eq:extension of box})\\
			&$=$&$U(p)^c{\downarrow}^c$ & (def.\ of $\valb{\cdot}_U$)\\
			&$=$&$\val{p}_{U^{\uparrow}}$, & (def.\ of $U^{\uparrow}$) \\
		\end{tabular}
	\end{center}
	as required.
	As for the inductive step, let $\phi: = \psi\rightarrow \chi$. Then, for any valuation $U$,
	\begin{center}
		\begin{tabular}{r c l l}
			$\valb{\tau(\psi\rightarrow \chi)}_U$&$=$&$\valb{\Box_{\leq} (\neg \tau(\psi)\vee \tau(\chi))}_U$ & (def.\ of $\tau$)\\
			&$=$&$\valb{\neg \tau(\psi)\vee \tau(\chi)}_U ^c{\downarrow}^c$ & (equation \eqref{eq:extension of box})\\
			&$=$&$(\valb{\tau(\psi)}_U^c\cup \valb{\tau(\chi)}_U) ^c{\downarrow}^c$ & (def.\ of $\valb{\cdot}_U$) \\
			&$=$&$(\val{\psi}_{U^{\uparrow}}^c\cup \val{\chi}_{U^{\uparrow}}) ^c{\downarrow}^c$ & (induction hypothesis)\\
			&$=$&$\val{\psi\rightarrow \chi}_{U^{\uparrow}}$, &  (equation \eqref{eq: extension of rightarrow}, $U^{\uparrow}$ persistent)\\
		\end{tabular}
	\end{center}
	as required.
	The remaining cases are omitted.
\end{proof}

Hence, the main semantic property of GMT translation, stated in Proposition \ref{fact: main property goedel transl}, can be understood in terms of the interplay between persistent and nonpersistent valuations, as captured in the above lemma. In the next section, we are going to establish a general template for this interplay and then apply it to the algebraic analysis of GMT translations in various  settings.  

\section{Unifying analysis of GMT translations}\label{ssec:template} 

In the present section, we  generalize the key mechanism captured in Section \ref{ssec: sem analysis godel tarski} and guaranteeing the preservation and reflection of validity under the GMT translation. Being able to identify this pattern in generality will make it possible to recognise this mechanism in several logical settings, as we do in the following section.

Let $\mathcal{L}_1$ and $\mathcal{L}_2$ be  propositional languages over a given set $X$, and let $\mathbb{A}$ and $\bbB$ be ordered $\mathcal{L}_1$- and $\mathcal{L}_2$-algebras respectively, such that an order-embedding
$e \colon \bbA \hookrightarrow \bbB$ exists. For each $V \in \bbA^X$ and $U \in \bbB^X$, let  $\val{\cdot}_V$ and $\valb{\cdot}_U$ denote their unique homomorphic extensions to $\mathcal{L}_1$ and $\mathcal{L}_2$ respectively. Clearly, $e \colon \bbA \hookrightarrow \bbB$ lifts to a map $\overline{e}: \bbA^X \to \bbB^X$ by the assignment $V\mapsto e\circ V$.  
\begin{prop}\label{prop: main prop of Godel tarski algebraically}
	Let $\tau \colon \mathcal{L}_1 \to \mathcal{L}_2$ and  $r\colon \bbB^X\to \bbA^X$ be such that the following conditions hold for every $\phi \in \mathcal{L}_1$:
	\begin{enumerate}[label={(\alph*)}]
		\item $e(\val{ \phi}_V) = \valb{\tau(\phi)}_{\overline{e}(V)}$ for every $V \in \bbA^X$;
		\item $\valb{ \tau (\phi) }_U = e(\val{\phi}_{r(U)})$ for every $U \in \bbB^X$.
	\end{enumerate}
	Then, for all $\phi, \psi\in \mathcal{L}_1$,
	\[
	\bbA \models \phi\leq \psi \quad \mbox{ iff } \quad \bbB \models \tau(\phi)\leq \tau(\psi).
	\]
\end{prop}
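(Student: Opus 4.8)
The plan is to unwind both sides of the biconditional into their pointwise semantic meaning and then transport the relevant values back and forth across the order-embedding $e$, invoking in each direction exactly the hypothesis suited to it. Recall that $\bbA \models \phi\leq \psi$ means $\val{\phi}_V \leq \val{\psi}_V$ in $\bbA$ for every $V \in \bbA^X$, and likewise $\bbB \models \tau(\phi)\leq \tau(\psi)$ means $\valb{\tau(\phi)}_U \leq \valb{\tau(\psi)}_U$ in $\bbB$ for every $U \in \bbB^X$. The single structural fact I would lean on throughout is that an order-embedding is both order-preserving and order-\emph{reflecting}: $a \leq a'$ in $\bbA$ if and only if $e(a) \leq e(a')$ in $\bbB$.

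For the left-to-right implication, I would assume $\bbA \models \phi\leq \psi$ and fix an arbitrary $U \in \bbB^X$. The key move is to pull $U$ back along $r$: applying the assumed validity to the valuation $r(U) \in \bbA^X$ gives $\val{\phi}_{r(U)} \leq \val{\psi}_{r(U)}$, and pushing this forward through the order-preserving map $e$ yields $e(\val{\phi}_{r(U)}) \leq e(\val{\psi}_{r(U)})$. By condition (b) the two sides are precisely $\valb{\tau(\phi)}_U$ and $\valb{\tau(\psi)}_U$, so $\valb{\tau(\phi)}_U \leq \valb{\tau(\psi)}_U$; since $U$ was arbitrary, $\bbB \models \tau(\phi)\leq \tau(\psi)$.

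For the converse, I would assume $\bbB \models \tau(\phi)\leq \tau(\psi)$ and fix an arbitrary $V \in \bbA^X$. Here I would instead push $V$ forward along $\overline{e}$: instantiating the assumed validity at $\overline{e}(V) \in \bbB^X$ gives $\valb{\tau(\phi)}_{\overline{e}(V)} \leq \valb{\tau(\psi)}_{\overline{e}(V)}$, which by condition (a) reads $e(\val{\phi}_V) \leq e(\val{\psi}_V)$. Since $e$ reflects the order, this forces $\val{\phi}_V \leq \val{\psi}_V$, and the arbitrariness of $V$ gives $\bbA \models \phi\leq \psi$.

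I do not expect any genuine obstacle: the whole argument is a two-line transport of inequalities in each direction, with condition (b) servicing the forward implication (since it re-expresses $\bbB$-values via the retraction $r$) and condition (a) servicing the backward one (since it re-expresses $\bbA$-values via $\overline{e}$). The only points deserving care are bookkeeping ones, namely matching each hypothesis to the correct direction and invoking order-\emph{reflection} (not merely order-preservation) of $e$ in the backward step. The substantive work, namely verifying that concrete GMT translations actually satisfy (a) and (b), is deliberately not part of this proposition and is discharged in the instantiation sections; the statement merely isolates (a) and (b) as the abstract hinge of the argument, just as Lemma \ref{lemma: main property goedel transl} did in the concrete S4 case.
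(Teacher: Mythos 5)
Your proof is correct and follows essentially the same route as the paper's: condition (b) together with order-preservation of $e$ handles the direction from $\bbA$ to $\bbB$ via the pulled-back valuation $r(U)$, and condition (a) together with order-reflection handles the converse via $\overline{e}(V)$. The only cosmetic difference is that the paper phrases both implications contrapositively, while you argue them directly; the content is identical.
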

\begin{proof}
	From left to right, suppose contrapositively that $(\bbB,U) \not\models \tau(\phi)\leq \tau(\psi)$ for some $U \in \bbB^X$, that is, $\valb{\tau(\phi)}_U\not\leq \valb{\tau(\psi)}_U$.  By item (b) above,  this non-inequality is equivalent to $e(\val{\phi}_{r(U)})\not\leq e(\val{\psi}_{r(U)})$, which, by the monotonicity of $e$, implies that $\val{\phi}_{r(U)}\not\leq \val{\psi}_{r(U)}$, that is,  $(\bbA, r(U)) \not\models \phi\leq \psi$, as required. Conversely, if $(\bbA,V) \not\models \phi\leq \psi$ for some $V \in \bbA^X$, then $\val{\phi}_{V}\not\leq \val{\psi}_{V}$, and hence, since  $e$ is an order-embedding and by item (a) above, $\valb{\tau(\phi)}_{\overline{e}(V)} = e(\val{\phi}_{V})\not\leq e(\val{\psi}_{V}) = \valb{\tau(\psi)}_{\overline{e}(V)}$, that is   $(\bbB, \overline{e}(V)) \not\models \tau(\phi)\leq \tau(\psi)$, as required.
\end{proof}
In the proof above we have only made use of the assumption that $e$ is an order-embedding, but have not needed to assume any property of $r$.
Moreover, the proposition above is independent of the logical/algebraic signature of choice, and hence can be used as a general template accounting for the main property of GMT translations. Finally, the proposition holds for {\em arbitrary} ordered algebras. This latter point is key to the treatment of Sahlqvist canonicity via translation.

\section{Instantiations of the general template for GMT translations}\label{sec:instantiations}

In the present section, we look into a family of GMT-type translations, defined for different logics, to which we  apply the template of Section \ref{ssec:template}. We organize these considerations into two subsections, in the first of which we treat the GMT translations for intuitionistic and co-intuitionistic logic and discuss how these extend to bi-intuitionistic logic. In the second subsection we consider parametrized versions of the GMT translation in the style of Gehrke, Nagahashi and Venema \cite{GeNaVe05} for normal modal expansions of bi-intuitionistic and DLE-logics.

\subsection{Non-parametric GMT translations}

As is well known, the semantic underpinnings of the GMT translations for intuitionistic and co-intuitionistic logic are the embeddings of Heyting algebras as the algebras of open elements of interior algebras and of Brouwerian algebras (aka co-Heyting algebras) as algebras of closed elements of closure algebras \cite{mckinsey1946closed}. In the following subsections, the existence of these embeddings will be used to satisfy the key assumptions of Section \ref{ssec:template}.

\subsubsection{Interior operator analysis of the GMT translation for intuitionistic logic}

As observed above, Proposition \ref{prop: main prop of Godel tarski algebraically} generalizes Proposition \ref{fact: main property goedel transl} in more than one way.  In the present subsection, we show that the GMT translation for intuitionistic logic verifies the conditions of Proposition \ref{prop: main prop of Godel tarski algebraically}. This is an alternative proof of the well-known fact that the GMT-translation is full and faithful, not only with respect to perfect algebras (dual to frames), but also with respect to general algebras. This fact is necessary for the proof of the transfer of Sahlqvist canonicity (cf.~Section \ref{sec:canonicity}).
Towards this goal, we let $X: = \mathsf{Prop}$, $\mathcal{L}_1: = \mathcal{L}_I$, and $\mathcal{L}_2: = \mathcal{L}_{S4}$. Moreover, we let $\bbA$ be a  Heyting algebra,  and $\bbB$  a Boolean algebra  such that an order-embedding $e\colon \bbA\hookrightarrow\bbB$ exists, which is  also a homomorphism of the lattice reducts of $\bbA$ and $\bbB$, and has  a right adjoint\footnote{That is, $e(a)\leq b$ iff $a\leq \iota(b)$ for every $a\in \bbA$ and $b\in \bbB$. By well known order-theoretic facts (cf.\ \cite{DaPr90}), $e\circ\iota$ is an {\em interior operator}, that is, for every $b, b'\in \bbB$, \begin{enumerate}
		\item[i1.] $(e\circ\iota)(b)\leq b$;
		\item[i2.] if $b\leq b'$ then $(e\circ\iota)(b)\leq (e\circ\iota)(b')$;
		\item[i3.] $(e\circ\iota)(b)\leq (e\circ\iota)((e\circ\iota)(b))$.
	\end{enumerate}Moreover, $e\circ\iota\circ e = e$ and $\iota = \iota\circ e \circ \iota$ (cf.\ \cite[Lemma 7.26]{DaPr90}).} $\iota\colon \bbB\to \bbA$ such that for all $a, b\in \bbA$,
\begin{equation}
\label{eq: rightarrow Rightarrow e and iota}
a\rightarrow^\bbA b = \iota(\neg^\bbB e(a)\vee^\bbB e(b)).\end{equation}
Then $\bbB$ can be endowed with a natural structure of Boolean algebra expansion (BAE) by defining $\Box^\bbB\colon \bbB\to \bbB$ by the assignment $b\mapsto (e\circ \iota)(b)$.
The following is a well known fact in  algebraic modal logic:
\begin{prop}\label{prop:BAE is s4}
	The BAE $(\bbB, \Box^\bbB)$, with $\Box^\bbB$ defined above, is  normal  and is also an S4-modal algebra.
\end{prop}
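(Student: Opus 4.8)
The plan is to verify the two defining properties separately: normality of $\Box^\bbB$ (preservation of finite meets, including the empty meet) and the S4 axioms (reflexivity and transitivity of $\Box^\bbB$). Both follow almost immediately from the order-theoretic facts recorded in the footnote attached to the definition of $\iota$, together with the hypothesis that $e$ is a homomorphism of the lattice reducts.

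For normality, I would first recall the standard adjunction fact that a right adjoint preserves all existing meets; hence $\iota\colon\bbB\to\bbA$ preserves finite meets and, in particular, sends $\top^\bbB$ to $\top^\bbA$ (the empty meet). Since $e$ is a bounded lattice homomorphism, it too preserves finite meets and the top element. Therefore, for all $b, b'\in\bbB$,
\[\Box^\bbB(b\wedge b') = e(\iota(b\wedge b')) = e(\iota(b)\wedge\iota(b')) = e(\iota(b))\wedge e(\iota(b')) = \Box^\bbB b\wedge\Box^\bbB b',\]
and $\Box^\bbB\top^\bbB = e(\iota(\top^\bbB)) = e(\top^\bbA) = \top^\bbB$, establishing that $(\bbB,\Box^\bbB)$ is a normal BAE.

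For the S4 axioms, I would invoke directly the three interior-operator properties i1--i3 listed in the footnote, which hold precisely because $\Box^\bbB = e\circ\iota$ with $e\dashv\iota$. Property i1 is the reflexivity axiom $\Box^\bbB b\leq b$ (the T axiom), and property i3 is the transitivity axiom $\Box^\bbB b\leq \Box^\bbB\Box^\bbB b$ (the 4 axiom). Instantiating i1 at $\Box^\bbB b$ gives the reverse inequality $\Box^\bbB\Box^\bbB b\leq\Box^\bbB b$, so in fact $\Box^\bbB$ is idempotent, which is the characteristic feature of the interior operator of an S4-algebra.

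There is essentially no obstacle here: the statement is a bookkeeping exercise assembling facts that are either standard (right adjoints preserve meets) or already recorded in the excerpt (the interior-operator clauses i1--i3 and the identities $e\circ\iota\circ e = e$ and $\iota = \iota\circ e\circ\iota$). The only point requiring a moment's care is the preservation of the \emph{empty} meet, i.e.\ that $\Box^\bbB\top = \top$; this is what distinguishes `normal' from merely `meet-preserving on nonempty finite meets', and it is secured by the fact that both $\iota$ (as a right adjoint) and $e$ (as a bounded lattice homomorphism) preserve $\top$.
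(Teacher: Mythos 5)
Your proof is correct and takes essentially the same approach as the paper's: normality is obtained from $\iota$ being a right adjoint (hence finite-meet-preserving) composed with the lattice homomorphism $e$, and the axioms T and 4 are precisely the interior-operator clauses i1 and i3, which the paper simply derives in one line each from the unit/counit inequalities of the adjunction $e\dashv\iota$ rather than citing the footnote.
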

\begin{proof}
	The fact that $\Box^\bbB$ preserves finite (hence empty) meets readily follows from the fact that $\iota$ is a right adjoint, and hence preserves existing (thus all finite) meets of $\bbB$, and $e$ is a lattice homomorphism. For every $b\in \bbB$, $\iota(b)\leq \iota(b)$ implies that $\Box^\bbB b = e(\iota(b))\leq b$, which proves (T). For every $b\in \bbB$, $e(\iota(b))\leq e(\iota(b))$ implies that $\iota(b)\leq \iota(e(\iota(b)))$ and hence $\Box^\bbB b = e(\iota(b))\leq e(\iota(e(\iota(b)))) = (e\circ\iota)((e\circ\iota)(b)) = \Box^\bbB\Box^\bbB b$, which proves K4.
\end{proof}
Finally, we let $r: \bbB^X\to \bbA^X$ be defined by the assignment $U\mapsto (\iota\circ U)$.

\begin{prop}
	\label{prop:tau verifies a and b}
	Let $\bbA$, $\bbB$, $e\colon \bbA\hookrightarrow\bbB$ and $r\colon \bbB^X\to \bbA^X$ be as above.\footnote{The assumption that  $e$ is a homomorphism of the lattice reducts of $\bbA$ and $\bbB$ is needed for the inductive steps relative to $\bot, \top, \wedge, \vee$  in the proof this proposition, while condition \eqref{eq: rightarrow Rightarrow e and iota} is needed for the step relative to $\rightarrow$.} Then the GMT translation $\tau$ satisfies conditions (a) and (b) of Proposition \ref{prop: main prop of Godel tarski algebraically} for any formula $\phi\in \mathcal{L}_I$.
\end{prop}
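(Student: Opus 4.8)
The plan is to prove both (a) and (b) by induction on the structure of $\phi\in\mathcal{L}_I$, carrying the two statements in parallel since they require the same case analysis and rest on the same two algebraic facts: that $e$ is a homomorphism of the bounded-lattice reducts (so it commutes with $\bot,\top,\wedge,\vee$), and that $\Box^\bbB = e\circ\iota$ by definition, together with the adjunction identities recorded in the footnote to the statement, namely $e\circ\iota\circ e = e$ and the interiority of $e\circ\iota$. Throughout, I would use that $\valb{\cdot}_{(\cdot)}$ and $\val{\cdot}_{(\cdot)}$ are homomorphic extensions, so that they commute with the connectives of $\mathcal{L}_{S4}$ and $\mathcal{L}_I$ respectively, and that $\overline{e}(V) = e\circ V$ and $r(U) = \iota\circ U$ by definition.

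For the base case $\phi := p$ of item (a), I would compute $\valb{\tau(p)}_{\overline{e}(V)} = \valb{\Box_\leq p}_{e\circ V} = \Box^\bbB(e(V(p))) = (e\circ\iota)(e(V(p)))$, and then invoke $e\circ\iota\circ e = e$ to conclude this equals $e(V(p)) = e(\val{p}_V)$. For item (b) the base case is even more direct: $\valb{\tau(p)}_U = \Box^\bbB(U(p)) = (e\circ\iota)(U(p)) = e(\iota(U(p))) = e(\val{p}_{r(U)})$. These two computations isolate the precise role of the interior-operator structure: in (a) it is the absorption identity $e\circ\iota\circ e = e$ that renders harmless the extra $\Box$ that $\tau$ introduces on atoms, whereas in (b) the retraction $r$ is designed exactly so that $\iota\circ U$ undoes $e$ on the nose.

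The inductive steps for $\bot,\top,\wedge,\vee$ are routine: since $\tau$ commutes with these connectives, one pushes $e$ through using that $e$ preserves the bounded-lattice operations, and then closes with the induction hypothesis. The only substantial case is $\phi := \psi\rightarrow\chi$. Unfolding $\tau(\psi\rightarrow\chi) = \Box_\leq(\neg\tau(\psi)\vee\tau(\chi))$ and applying the induction hypothesis turns the right-hand side of (a) into $(e\circ\iota)(\neg^\bbB e(\val{\psi}_V)\vee^\bbB e(\val{\chi}_V))$; pulling $e$ out of $e\circ\iota$ and then reading equation \eqref{eq: rightarrow Rightarrow e and iota} backwards identifies the argument of $\iota$ with $\val{\psi}_V\rightarrow^\bbA\val{\chi}_V$, giving $e(\val{\psi\rightarrow\chi}_V)$. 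The same manipulation, now with $r(U)$ in the role of $V$, handles item (b).

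I expect the main obstacle to be bookkeeping rather than conceptual: one must keep straight which superscript ($\bbA$ or $\bbB$) each operation carries and make sure that the Heyting implication $\rightarrow^\bbA$ is invoked only through equation \eqref{eq: rightarrow Rightarrow e and iota}, since $\bbB$ carries no primitive implication and $\tau$ deliberately codes it as $\Box_\leq(\neg(-)\vee(-))$. The one point demanding genuine care is ensuring that the footnote identity $e\circ\iota\circ e = e$ is indeed available from $e\dashv\iota$ with $e$ an order-embedding, as it is exactly what the atomic case of (a) needs and what distinguishes the verification of (a) from that of (b).
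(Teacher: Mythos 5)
Your proposal is correct and follows essentially the same route as the paper's proof: induction on $\phi$, with the atomic case of (a) resting on $e\circ\iota\circ e=e$, the atomic case of (b) resting on the definition of $r(U)=\iota\circ U$, and the $\rightarrow$-step resting on condition \eqref{eq: rightarrow Rightarrow e and iota} together with $\Box^\bbB=e\circ\iota$. The only difference is cosmetic — you read some equality chains in the opposite direction from the paper — so there is nothing substantive to add.
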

\begin{proof}
	By induction on $\phi$. As for the base case, let $\phi: = p\in \mathsf{Prop}$. Then, for any $U\in \bbB^X$ and   $V\in\bbA^X$,
	\begin{center}
		\begin{tabular}{r c l c r c l l}
			$e(\val{p}_{r(U)})$ & = & $e((\iota\circ U)(p))$ &\quad\quad &$\valb{\tau(p)}_{\overline{e}(V)}$ & = & $\valb{\Box_{\leq} p}_{\overline{e}(V)}$ \\
			& = & $(e\circ \iota)(\valb{p}_U)$ &assoc.\ of $\circ$ && = & $\Box^\bbB\valb{p}_{\overline{e}(V)}$ \\
			& = & $\Box^\bbB\valb{p}_U$ &\quad\quad && = & $\Box^\bbB((e\circ V)(p))$ \\
			& = & $\valb{\Box_{\leq} p}_U$ &\quad\quad && = & $(e\circ \iota)((e\circ V)(p))$ \\
			& = & $\valb{\tau( p)}_U$. &\quad\quad && = & $e((\iota \circ e)(V(p)))$ & assoc.\ of $\circ$\\
			&&&&& = & $e(V(p))$ & $e\circ(\iota\circ e) =e$\\
			&&&&& = & $e(\val{p}_V)$, \\
		\end{tabular}
	\end{center}
	which proves  the base cases of  (b) and (a) respectively. As for the inductive step, let $\phi: = \psi\rightarrow \chi$. Then, for any $U\in \bbB^X$ and   $V\in\bbA^X$,
	\begin{center}
		\begin{tabular}{r c l  l}
			$e(\val{\psi\rightarrow \chi}_{r(U)})$ & = & $e(\val{\psi}_{r(U)}\rightarrow^\bbA \val{ \chi}_{r(U)})$ \\
			& = & $e(\iota(\neg^\bbB e(\val{\psi}_{r(U)})\vee^\bbB e(\val{ \chi}_{r(U)})))$ & assumption \eqref{eq: rightarrow Rightarrow e and iota} \\
			& = & $e(\iota(\neg^\bbB\valb{\tau(\psi)}_{U}\vee^\bbB \valb{\tau(\chi)}_{U}))$ & (induction hypothesis)\\
			& = & $(e\circ \iota)(\neg^\bbB\valb{\tau(\psi)}_{U}\vee^\bbB \valb{\tau(\chi)}_{U})$ & \\
			& = & $\Box^\bbB(\neg^\bbB \valb{\tau(\psi)}_{U}\vee^\bbB \valb{ \tau(\chi)}_{U})$ & \\
			& = & $\valb{\Box_\leq(\neg \tau(\psi)\vee\tau(\chi))}_{U}$ & \\
			& = & $\valb{\tau(\psi\rightarrow\chi)}_{U}$. & \\
		\end{tabular}
	\end{center}
	
	\begin{center}
		\begin{tabular}{r c l  l}
			$e(\val{\psi\rightarrow \chi}_{V})$ & = & $e(\val{\psi}_{V}\rightarrow^\bbA \val{ \chi}_{V})$ \\
			& = & $e(\iota(\neg^\bbB e(\val{\psi}_{V})\vee^\bbB e(\val{ \chi}_{V})))$ & assumption \eqref{eq: rightarrow Rightarrow e and iota} \\
			& = & $e(\iota(\neg^\bbB \valb{\tau(\psi)}_{\overline{e}(V)}\vee^\bbB \valb{ \tau(\chi)}_{\overline{e}(V)}))$ & (induction hypothesis)\\
			& = & $(e\circ \iota)(\neg^\bbB \valb{\tau(\psi)}_{\overline{e}(V)}\vee^\bbB \valb{ \tau(\chi)}_{\overline{e}(V)})$ & \\
			& = & $\Box^\bbB(\neg^\bbB \valb{\tau(\psi)}_{\overline{e}(V)}\vee^\bbB \valb{ \tau(\chi)}_{\overline{e}(V)})$ & \\
			& = & $\valb{\Box_\leq(\tau(\psi)\vee\tau(\chi))}_{\overline{e}(V)}$ & \\
			& = & $\valb{\tau(\psi\rightarrow\chi)}_{\overline{e}(V)}$. & \\
		\end{tabular}
	\end{center}
	The remaining cases are straightforward, and are left to the reader.
\end{proof}


The following strengthening of Proposition \ref{fact: main property goedel transl} immediately follows from Propositions \ref{prop: main prop of Godel tarski algebraically} and \ref{prop:tau verifies a and b}:
\begin{cor}
	Let $\bbA$ be a Heyting algebra and $\bbB$ a Boolean algebra such that $e\colon \bbA\hookrightarrow\bbB$ and $\iota\colon \bbB\to \bbA$ exist as above. Then for all intuitionistic formulas $\phi$ and $\psi$,
	\[
	\bbA \models \phi\leq \psi \quad \mbox{ iff } \quad \bbB \models \tau(\phi)\leq \tau(\psi),
	\]
	where $\tau$ is the GMT translation.
\end{cor}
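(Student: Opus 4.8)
The plan is to obtain the statement purely as an instantiation of the general template of Proposition~\ref{prop: main prop of Godel tarski algebraically}, so that no genuinely new argument is required at this point. Concretely, I would take $X := \mathsf{Prop}$, $\mathcal{L}_1 := \mathcal{L}_I$, $\mathcal{L}_2 := \mathcal{L}_{S4}$, the given Heyting algebra $\bbA$ and the Boolean algebra $\bbB$ (endowed with the S4-operator $\Box^\bbB := e\circ\iota$ as in Proposition~\ref{prop:BAE is s4}), the order-embedding $e\colon\bbA\hookrightarrow\bbB$, and the map $r\colon\bbB^X\to\bbA^X$ defined by $U\mapsto\iota\circ U$. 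With these choices the GMT translation $\tau$ plays the role of the map $\tau\colon\mathcal{L}_1\to\mathcal{L}_2$ in the template.

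The first step is to observe that the hypotheses of Proposition~\ref{prop: main prop of Godel tarski algebraically} are met. That $e$ is an order-embedding is part of the standing assumptions `as above'; the two semantic conditions (a) and (b)---namely $e(\val{\phi}_V)=\valb{\tau(\phi)}_{\overline{e}(V)}$ and $\valb{\tau(\phi)}_U=e(\val{\phi}_{r(U)})$, for all $\phi\in\mathcal{L}_I$ and all relevant valuations---are exactly what Proposition~\ref{prop:tau verifies a and b} establishes. Having verified the hypotheses, the second step is simply to invoke the conclusion of Proposition~\ref{prop: main prop of Godel tarski algebraically}, instantiated at the two intuitionistic formulas $\phi$ and $\psi$, which yields $\bbA\models\phi\leq\psi$ iff $\bbB\models\tau(\phi)\leq\tau(\psi)$, as required.

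There is essentially no obstacle left at this stage: all the substantive work has already been absorbed into the inductive verification of conditions (a) and (b) carried out in Proposition~\ref{prop:tau verifies a and b}, and into the order-theoretic template of Proposition~\ref{prop: main prop of Godel tarski algebraically} (whose proof uses only that $e$ is an order-embedding, not any property of $r$). The one point worth double-checking is the bookkeeping of the instantiation---that the $e$, $r$, $\bbA$, $\bbB$ appearing in the Corollary's hypotheses coincide with those for which (a) and (b) were proved, and that $\bbB$ indeed carries the S4-structure under which $\tau$ lands in $\mathcal{L}_{S4}$---but this is immediate from Propositions~\ref{prop:BAE is s4} and~\ref{prop:tau verifies a and b}. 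Hence the result is a direct corollary, and the two-step combination above is the whole proof.
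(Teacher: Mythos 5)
Your proposal is correct and is exactly the paper's own argument: the corollary is stated there as following immediately from Proposition \ref{prop: main prop of Godel tarski algebraically} (the general template) combined with Proposition \ref{prop:tau verifies a and b} (which verifies conditions (a) and (b) for $\tau$ with $r = \iota\circ(-)$). Your instantiation and bookkeeping match the paper's intended derivation.
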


We conclude this subsection by showing that 
the required embedding of Heyting algebras into Boolean algebras exists. This well known fact appears in its order-dual version already in \cite[Theorem 1.15]{mckinsey1946closed}, where is shown in a purely algebraic way. We give an alternative proof, based on very well known duality-theoretic facts.     
The format of the statement is different from the dual of \cite[Theorem 1.15]{mckinsey1946closed} both because it is in the form required by Proposition \ref{prop: main prop of Godel tarski algebraically}, and also because it highlights that the embedding lifts to the canonical extensions of the algebras involved, where it acquires additional properties. This  is relevant to the analysis of canonicity via translation in Section \ref{sec:canonicity}.

\begin{prop}\label{prop:existence of algebras and adjoints:Heyting}
	For every Heyting algebra $\bbA$, there exists a Boolean algebra $\bbB$ such that $\bbA$ embeds into $\bbB$ via some order-embedding $e\colon \bbA\hookrightarrow\bbB$ which is also a homomorphism of the lattice reducts of $\bbA$ and $\bbB$ and has a right adjoint $\iota\colon \bbB\to \bbA$ verifying condition \eqref{eq: rightarrow Rightarrow e and iota}. Finally, these facts lift to the canonical extensions of $\bbA$ and $\bbB$ as in the following diagram:
	\begin{center}
		
		\begin{tikzpicture}[node/.style={circle, draw, fill=black}, scale=1]
		
		\node (A) at (-1.5,-1.5) {$\mathbb{A}$};
		\node (A delta) at (-1.5,1.5) {$\mathbb{A}^{\delta}$};
		\node (B) at (1.5,-1.5) {$\mathbb{B}$};
		\node (B delta) at (1.5,1.5) {$\mathbb{B}^{\delta}$};
		
		\draw [right hook->] (A) to  (A delta);
		\draw [right hook->] (B)  to (B delta);
		\draw [right hook->] (A)  to node[below] {$e$}  (B);
		\draw [right hook->] (A delta)  to node[below] {$e^{\delta}$}  (B delta);
		
		\draw [->] (B delta) to [out=135,in=45, looseness=0.5]  node[below] {\rotatebox[origin=c]{270}{$\vdash$}} node[above] {$\iota^{\pi}$}  (A delta);
		\draw [->] (B) to [out=135,in=45, looseness=0.5]  node[below] {\rotatebox[origin=c]{270}{$\vdash$}} node[above] {$\iota$}  (A);
		
		\draw [->] (B delta) to [out=225,in=-45, looseness=1]  node[above] {\rotatebox[origin=c]{270}{$\vdash$}} node[below] {$c$}  (A delta);
		\end{tikzpicture}
	\end{center}
\end{prop}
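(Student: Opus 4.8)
\emph{Proof plan.} The plan is to build $\mathbb{B}$, $e$ and $\iota$ by Priestley/Esakia duality, to observe that condition \eqref{eq: rightarrow Rightarrow e and iota} then comes for free, and finally to transport the whole configuration to the canonical extensions using the functoriality of $(\cdot)^\delta$.

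\textbf{Step 1 (construction of $\mathbb{B}$, $e$, $\iota$).} First I would let $X=(X,\le,\tau)$ be the Esakia space dual to $\mathbb{A}$, so that $\mathbb{A}\cong\mathrm{ClopUp}(X)$, the Heyting algebra of clopen up-sets of $X$. I would then take $\mathbb{B}:=\mathrm{Clop}(X)$, the Boolean algebra of \emph{all} clopen subsets of $X$, and let $e$ be the inclusion $\mathrm{ClopUp}(X)\hookrightarrow\mathrm{Clop}(X)$. Since finite intersections and unions of clopen up-sets, computed in $\mathrm{Clop}(X)$, are again clopen up-sets and agree with the Heyting-algebra operations, $e$ is a bounded-lattice embedding. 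I would define $\iota(C):=(\mathord{\downarrow}C^c)^c$ for $C\in\mathrm{Clop}(X)$; this is the largest clopen up-set contained in $C$, and it is clopen precisely because the Esakia property guarantees that down-sets of clopens are clopen. A one-line check, $e(a)\subseteq C \iff a\subseteq\iota(C)$, then shows $e\dashv\iota$.

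\textbf{Step 2 (condition \eqref{eq: rightarrow Rightarrow e and iota} is automatic).} Rather than verify \eqref{eq: rightarrow Rightarrow e and iota} by hand in the dual space, I would note that it holds for \emph{any} lattice embedding $e$ of a Heyting algebra into a Boolean algebra $\mathbb{B}$ possessing a right adjoint $\iota$. Indeed, for all $a,b,c$,
\[
c\le\iota(\neg^\mathbb{B} e(a)\vee^\mathbb{B} e(b)) \iff e(c)\le\neg^\mathbb{B} e(a)\vee^\mathbb{B} e(b) \iff e(c\wedge a)\le e(b) \iff c\wedge a\le b,
\]
using, in order, the adjunction $e\dashv\iota$, the Boolean law $x\le\neg y\vee z\iff x\wedge y\le z$ together with $e$ being a lattice homomorphism, and finally that $e$ is an order-embedding. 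Hence $\iota(\neg^\mathbb{B} e(a)\vee^\mathbb{B} e(b))$ is the relative pseudocomplement $a\rightarrow^\mathbb{A} b$, which is exactly \eqref{eq: rightarrow Rightarrow e and iota}.

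\textbf{Step 3 (lifting to canonical extensions).} By functoriality of canonical extensions, the bounded-lattice embedding $e$ lifts to $e^\delta=e^\sigma=e^\pi\colon\mathbb{A}^\delta\hookrightarrow\mathbb{B}^\delta$, a complete lattice homomorphism that is again injective. Being a complete homomorphism, $e^\delta$ preserves arbitrary joins and arbitrary meets, so it has both a left adjoint $c$ and a right adjoint; moreover the adjunction $e\dashv\iota$ lifts to $e^\delta\dashv\iota^\pi$ by the standard fact that $(\cdot)^\delta$ sends an adjoint pair to the adjoint pair consisting of the $\sigma$-extension of the left adjoint and the $\pi$-extension of the right adjoint. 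Since $\mathbb{B}^\delta$ is a perfect Boolean algebra and $\mathbb{A}^\delta$ a perfect Heyting algebra, I would then rerun the computation of Step 2 with $e^\delta\dashv\iota^\pi$ to obtain $a\rightarrow^{\mathbb{A}^\delta}b=\iota^\pi(\neg^{\mathbb{B}^\delta}e^\delta(a)\vee^{\mathbb{B}^\delta}e^\delta(b))$, where $\rightarrow^{\mathbb{A}^\delta}$ is the Heyting implication of $\mathbb{A}^\delta$ (which coincides with $\rightarrow^\pi$). This is precisely the content of the diagram.

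\textbf{Main obstacle.} The delicate point is not Steps 1--2 but the canonical-extension bookkeeping in Step 3: one must be certain that $e^\delta$ is genuinely a complete homomorphism (so that both adjoints exist) and, crucially, that the right adjoint produced abstractly is \emph{the same map} as $\iota^\pi$, not merely some abstract right adjoint. This identification is what makes the upper triangle commute and rests on the theorem that the canonical extension sends adjunctions to adjunctions with the $\sigma$/$\pi$ split; the order-theoretic facts about adjoints and interior operators needed along the way are collected in \cite[Lemma 7.26]{DaPr90}.
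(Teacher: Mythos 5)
Your proposal is correct and follows essentially the same route as the paper: construct $\mathbb{B}$ as the clopen subsets of the Esakia space of $\mathbb{A}$, take $e$ to be the inclusion and $\iota(C)=(\mathord{\downarrow}C^c)^c$ (clopen by the Esakia property), and lift via smoothness of $e$ and the fact that the right adjoint of $e^\delta$ is $\iota^\pi$. The one small improvement over the paper's write-up is your Step 2, where condition \eqref{eq: rightarrow Rightarrow e and iota} is derived abstractly from the adjunction $e\dashv\iota$ and the Boolean residuation law rather than checked in the dual space; the paper merely asserts that this verification is routine.
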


\begin{proof}
	Via  Esakia duality \cite{Es75}, the Heyting algebra $\bbA$ can be identified  with the algebra of clopen up-sets of its associated Esakia space $\mathbb{X}_\bbA$, which is a Priestley space, hence a Stone space. Let $\bbB$ be the Boolean algebra of the clopen sub{\em sets} of $\mathbb{X}_\bbA$. Since any clopen up-set is in particular a clopen subset,  a natural order embedding $e: \bbA\hookrightarrow\bbB$ exists, which is also a lattice homomorphism between $\bbA$ and $\bbB$. This shows the first part of the claim.
	
	As to the second part, notice that Esakia spaces are Priestley spaces in which the downward-closure of a clopen set is a clopen set.
	
	Therefore, we can define the map  $\iota\colon \bbB\to \bbA$ by the assignment $b\mapsto \neg((\neg b){\downarrow})$ where $b$ is identified with its corresponding clopen set in $\mathbb{X}_\bbA$, $\neg b$ is identified with the relative complement of the clopen set $b$, and $(\neg b){\downarrow}$ is defined using the order in $\mathbb{X}_\bbA$. It can be readily verified that $\iota$ is the right adjoint of $e$ and that moreover  condition \eqref{eq: rightarrow Rightarrow e and iota} holds. 
	%
	
	Finally, $e:\bbA\rightarrow\bbB$ being also a homomorphism between the lattice reducts of $\bbA$ and $\bbB$ implies that $e$ is smooth and its canonical extension $e^\delta\colon \bbad\to \bbbd$, besides being an order-embedding, is a complete homomorphism between the lattice reducts of $\bbad$ and $\bbbd$ (cf.\  \cite[Corollary 4.8]{GeHa01}), and hence is endowed with both a left and a right adjoint. Furthermore, the right adjoint of $e^\delta$ coincides with $\iota^\pi$ (cf.\  \cite[Proposition 4.2]{GePr07}). Hence, $\bbbd$ can be endowed with a natural structure of S4 bi-modal algebra by defining $\Box_{\leq}^{\bbbd}\colon \bbbd\to \bbbd$ by the assignment $u\mapsto (e^\delta\circ \iota^\pi)(u)$, and $\Diamond_{\geq}^{\bbbd}\colon \bbbd\to \bbbd$ by the assignment $u\mapsto (e^\delta\circ c)(u)$.
\end{proof}

\subsubsection{The GMT translation for co-intuitionistic logic}
\label{ssec: sem analysis co-godel tarski}

In the present subsection, we  show that the GMT translation for co-intuitionistic logic (which we will sometimes refer to as the co-GMT translation) verifies the conditions of Proposition \ref{prop: main prop of Godel tarski algebraically}. Our presentation is a straightforward dualization of the previous subsection. We include it for the sake of completeness and for introducing some notation.   Semantically, this dualization involves replacing Heyting algebras with co-Heyting algebras (aka Brouwerian algebras cf.~\cite{mckinsey1946closed}), and interior algebras (aka S4 algebras with box) with closure algebras (aka S4 algebras with diamond).

Fix a denumerable set $\mathsf{Prop}$ of propositional variables. The language of co-intuitionistic logic over $\mathsf{Prop}$ is given by
\[\mathcal{L}_C\ni\phi:: = p\mid \bot\mid \top\mid \phi\wedge \phi\mid \phi\vee\phi\mid \phi\pdra \phi.\]
The target language for translating co-intuitionistic logic is that of the normal modal logic S4$\Diamond$ over $\mathsf{Prop}$, given by
\[\mathcal{L}_{S4\Diamond}\ni\alpha:: = p\mid \bot \mid \alpha\vee \alpha\mid\alpha\wedge \alpha \mid\neg\alpha\mid \Diamond_{\geq}\alpha.\]
Just like intuitionistic logic, formulas of co-intuitionistic logic can be interpreted on partial orders $\mathbb{F} = (W,\leq)$ with persistent valuations. Here we only report on the interpretation of $\Diamond_{\geq}$-formulas in $\mathcal{L}_{S4\Diamond}$ and  $\pdra$-formulas in $\mathcal{L}_C$:
\begin{center}
	\begin{tabular}{l c l}
		$\mathbb{F}, w, U \Vdash^* \Diamond_{\geq}\phi$ &$\quad$& iff   $\ \mathbb{F}, v, U \Vdash^* \phi\ $ for some $v\in w{\downarrow}$.\\
		$\mathbb{F}, w, V \Vdash \phi \pdra \psi$ &$\quad$& iff    $\ \mathbb{F}, v, V \not\Vdash \phi\ $ and  $\ \mathbb{F}, v, V \Vdash \psi$ for some $v\in w{\downarrow}$.\\
	\end{tabular}
\end{center}
The language $\mathcal{L}_C$ is naturally interpreted in co-Heyting algebras. The connective $\pdra$ is interpreted as the left residual of $\vee$.
The co-GMT translation is the map $\sigma\colon \mathcal{L}_C\rightarrow \mathcal{L}_{S4\Diamond}$ defined by the following recursion:
\begin{center}
	\begin{tabular}{r c l}
		$\sigma (p)$ &  = & $\Diamond_{\geq} p$\\
		$\sigma (\bot)$ &  = & $\bot$ \\
		$\sigma (\top)$ &  = & $\top$ \\
		$\sigma (\phi\wedge \psi)$ &  = & $\sigma (\phi)\wedge \sigma(\psi)$  \\
		$\sigma (\phi\vee \psi)$ &  = & $\sigma(\phi)\vee \sigma(\psi)$  \\
		$\sigma (\phi \pdra \psi)$ &  = & $\Diamond_{\geq}(\neg\sigma (\phi) \wedge \sigma(\psi))$  \\
	\end{tabular}
\end{center}

Next, we show that Proposition \ref{prop: main prop of Godel tarski algebraically} applies to the co-GMT translation.
We let $X: = \mathsf{Prop}$, $\mathcal{L}_1: = \mathcal{L}_C$, and $\mathcal{L}_2: = \mathcal{L}_{S4\Diamond}$. Moreover, we let $\bbA$ be a  co-Heyting algebra,  and $\bbB$  a Boolean algebra  such that an order-embedding $e\colon \bbA\hookrightarrow\bbB$ exists, which is  also a homomorphism of the lattice reducts of $\bbA$ and $\bbB$, and has  a left adjoint\footnote{That is, $c(b)\leq a$ iff $b\leq e(a)$ for every $a\in \bbA$ and $b\in \bbB$. By well known order-theoretic facts (cf.\ \cite{DaPr90}), $e\circ c$ is an {\em interior operator}, that is, for every $b, b'\in \bbB$, \begin{enumerate}[label={c\arabic*.}]
		\item $b\leq (e\circ c)(b)$;
		\item if $b\leq b'$ then $(e\circ c)(b)\leq (e\circ c)(b')$;
		\item $(e\circ c)((e\circ c)(b))\leq (e\circ c)(b)$.
	\end{enumerate}Moreover, $e\circ c\circ e = e$ and $c = c\circ e \circ c$ (cf.\ \cite[Lemma 7.26]{DaPr90}).} $c\colon \bbB\to \bbA$ such that for all $a, b\in \bbA$,
\begin{equation}
\label{eq: pdra minus e and c}
a\pdra^\bbA b = c(\neg^\bbB e(a) \wedge^\bbB e(b)).\end{equation}
Then $\bbB$ can be endowed with a natural structure of Boolean algebra expansion (BAE) by defining $\Diamond^\bbB\colon \bbB\to \bbB$ by the assignment $b\mapsto (e\circ c)(b)$.
The following is the dual of Proposition \ref{prop:BAE is s4} and its proof is omitted.
\begin{prop}\label{prop:BAE dia is s4}
	The BAE $(\bbB, \Diamond^\bbB)$, with $\Diamond^\bbB$ defined above, is  normal  and is also an $S4\Diamond$-modal algebra.
\end{prop}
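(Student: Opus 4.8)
The plan is to dualize the proof of Proposition~\ref{prop:BAE is s4} line for line, replacing the right adjoint $\iota$ by the left adjoint $c$, the interior operator $e\circ\iota$ by the closure operator $e\circ c$, and the axioms (T) and (K4) for $\Box^\bbB$ by their order-duals for $\Diamond^\bbB$. Recall that an $S4\Diamond$-modal algebra is a normal BAE whose diamond is a closure operator, i.e.\ it satisfies $b\leq \Diamond^\bbB b$ (the reflexivity, or (T), axiom for $\Diamond$) and $\Diamond^\bbB\Diamond^\bbB b\leq \Diamond^\bbB b$ (the transitivity, or (4), axiom for $\Diamond$). Hence there are exactly three things to verify: normality, (T), and (4), each mirroring the corresponding step for the box.

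For normality, I would argue that since $c$ is a left adjoint it preserves all existing joins of $\bbB$, in particular all finite (hence empty) joins, and since $e$ is a homomorphism of the lattice reducts it too preserves finite (and empty) joins; composing, $\Diamond^\bbB = e\circ c$ preserves finite (hence empty) joins, which is precisely normality of the diamond. For (T), I would use the adjunction $c\dashv e$ directly: from the trivial inequality $c(b)\leq c(b)$, read through the adjunction law $c(b)\leq a \iff b\leq e(a)$ instantiated at $a:=c(b)$, I obtain $b\leq e(c(b)) = \Diamond^\bbB b$. This is just clause c1 of the footnote, i.e.\ the extensivity of the closure operator $e\circ c$.

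For (4), I would again start from the trivial inequality $e(c(b))\leq e(c(b))$ and read it through $c(b')\leq a \iff b'\leq e(a)$ with $b':=e(c(b))$ and $a:=c(b)$, obtaining $c(e(c(b)))\leq c(b)$; applying the monotone map $e$ then yields $\Diamond^\bbB\Diamond^\bbB b = e(c(e(c(b))))\leq e(c(b)) = \Diamond^\bbB b$, as required. Alternatively, one can invoke the identity $c = c\circ e\circ c$ recorded in the footnote to get $c(e(c(b))) = c(b)$ outright, whence $\Diamond^\bbB\Diamond^\bbB b = \Diamond^\bbB b$; this gives full idempotency of $e\circ c$, which is stronger than (and of course implies) (4).

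I do not expect any genuine obstacle here, since the whole argument is a mechanical order-dualization of Proposition~\ref{prop:BAE is s4}, as the paper itself signals by omitting the proof. The single point deserving care is the orientation of the adjunction: because $c$ is the \emph{left} adjoint of $e$ (so that $c(b)\leq a \iff b\leq e(a)$, rather than $e(a)\leq b \iff a\leq \iota(b)$ as in the box case), one must be sure to derive the closure-operator inequalities (extensivity and super-idempotency) and not accidentally reproduce the interior-operator ones.
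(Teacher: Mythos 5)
Your proposal is correct and is exactly the argument the paper intends: it omits the proof precisely because it is the order-dual of Proposition \ref{prop:BAE is s4}, and your dualization (left adjoint $c$ preserves joins for normality, the adjunction $c(b)\leq a \iff b\leq e(a)$ yielding $b\leq e(c(b))$ for (T) and $c(e(c(b)))\leq c(b)$ for (4)) matches that proof step for step. Your remark about keeping the orientation of the adjunction straight is well taken — note in passing that the paper's own footnote mislabels $e\circ c$ as an \emph{interior} operator while listing the closure-operator axioms, so your reading is the right one.
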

Finally, we let $r: \bbB^X\to \bbA^X$ be defined by the assignment $U\mapsto (c\circ U)$. The proof of the following proposition is similar to that of Proposition \ref{prop:tau verifies a and b}, and its proof is omitted.

\begin{prop}
	\label{prop:sigma verifies a and b}
	Let $\bbA$, $\bbB$, $e\colon \bbA\hookrightarrow\bbB$ and $r\colon \bbB^X\to \bbA^X$ be as above.\footnote{The assumption that  $e$ is a homomorphism of the lattice reducts of $\bbA$ and $\bbB$ is needed for the inductive steps relative to $\bot, \top, \wedge, \vee$  in the proof this proposition, while condition \eqref{eq: pdra minus e and c} is needed for the step relative to $\pdra$.} Then the co-GMT translation $\sigma$ satisfies conditions (a) and (b) of Proposition \ref{prop: main prop of Godel tarski algebraically} for any formula $\phi\in \mathcal{L}_C$.
\end{prop}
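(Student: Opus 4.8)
The plan is to dualize the proof of Proposition~\ref{prop:tau verifies a and b}, proceeding by induction on the structure of $\phi\in\mathcal{L}_C$ and verifying conditions (a) and (b) of Proposition~\ref{prop: main prop of Godel tarski algebraically}---here with $\sigma$ in place of $\tau$---side by side. The only facts I expect to need are: that $e$ is a homomorphism of the lattice reducts; that $c\dashv e$ forces $c\circ e=\mathrm{id}$ (which follows from the identity $e\circ c\circ e=e$ recorded in the footnote together with the injectivity of the order-embedding $e$); the defining clause $\Diamond^\bbB=e\circ c$; the assumption \eqref{eq: pdra minus e and c}; and the fact that $r(U)=c\circ U$.

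For the base case $\phi:=p\in\mathsf{Prop}$, condition (b) unwinds by the definitions of $\sigma$ and of $\Diamond^\bbB$: one computes $\valb{\sigma(p)}_U=\valb{\Diamond_\geq p}_U=(e\circ c)(U(p))=e((c\circ U)(p))=e(\val{p}_{r(U)})$. Condition (a) is the one place in the base case where the adjunction identity is needed: $\valb{\sigma(p)}_{\overline{e}(V)}=\Diamond^\bbB((e\circ V)(p))=(e\circ c\circ e)(V(p))=e(V(p))=e(\val{p}_V)$, the middle step being $c\circ e=\mathrm{id}$. The lattice cases $\bot,\top,\wedge,\vee$ are immediate from $e$ being a homomorphism of the lattice reducts: $e$ then commutes with the interpretation of these connectives, and both $\val{\cdot}$ and $\valb{\cdot}$ are homomorphic extensions, so the inductive hypotheses propagate without further assumptions.

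The substantive inductive step is $\phi:=\psi\pdra\chi$. Here I would start from $\sigma(\psi\pdra\chi)=\Diamond_\geq(\neg\sigma(\psi)\wedge\sigma(\chi))$, push the valuation inside using $\Diamond^\bbB=e\circ c$ and the homomorphic behaviour of $\valb{\cdot}$ over $\neg$ and $\wedge$, apply the induction hypothesis to replace $\valb{\sigma(\psi)}$ and $\valb{\sigma(\chi)}$ by $e(\val{\psi})$ and $e(\val{\chi})$ at the relevant valuation, and finally collapse the resulting $e\bigl(c(\neg^\bbB e(\cdot)\wedge^\bbB e(\cdot))\bigr)$ to $e(\val{\psi}\pdra^\bbA\val{\chi})=e(\val{\psi\pdra\chi})$ by \eqref{eq: pdra minus e and c}. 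Reading this chain at an arbitrary $U\in\bbB^X$ with the hypothesis for (b) yields (b), and reading the same equalities at $\overline{e}(V)$ with the hypothesis for (a) yields (a). I do not anticipate a genuine obstacle: the argument is a verbatim order-dual of the $\rightarrow$ case of Proposition~\ref{prop:tau verifies a and b}, with the interior operator $e\circ\iota$ replaced by the closure operator $e\circ c$, meets and the left adjoint $c$ taking over the roles played by joins and the right adjoint $\iota$, and $\Diamond_\geq$ replacing $\Box_\leq$. The only point demanding care is keeping the De Morgan bookkeeping and the adjunction identities ($c\circ e=\mathrm{id}$ and $\Diamond^\bbB=e\circ c$) straight.
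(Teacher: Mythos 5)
Your proposal is correct and is essentially the proof the paper intends: the paper explicitly omits this proof as "similar to that of Proposition \ref{prop:tau verifies a and b}", and your induction --- base case via $\Diamond^\bbB = e\circ c$ and $e\circ c\circ e = e$, lattice cases via $e$ being a lattice homomorphism, and the $\pdra$ step via condition \eqref{eq: pdra minus e and c} --- is exactly the order-dual of that argument. No gaps.
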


The following corollary immediately follows from Propositions \ref{prop: main prop of Godel tarski algebraically} and \ref{prop:sigma verifies a and b}:
\begin{cor}\label{cor:main property sigma}
	Let $\bbA$ be a co-Heyting algebra and $\bbB$ a Boolean algebra such that an order-embedding $e\colon \bbA\hookrightarrow\bbB$ exists, which is  a homomorphism of the lattice reducts of $\bbA$ and $\bbB$, and has  a left adjoint $c\colon \bbB\to \bbA$ such that condition \eqref{eq: pdra minus e and c} holds for all $a, b\in \bbA$. Then for all $\phi,\psi\in \mathcal{L}_C$,
	\[
	\bbA \models \phi\leq \psi \quad \mbox{ iff } \quad \bbB \models \sigma(\phi)\leq \sigma(\psi),
	\]
	where $\sigma$ is the co-GMT translation.
\end{cor}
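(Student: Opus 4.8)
The plan is to derive this corollary directly from the abstract template, exactly as its stated provenance indicates. Corollary \ref{cor:main property sigma} follows immediately from Proposition \ref{prop: main prop of Godel tarski algebraically} together with Proposition \ref{prop:sigma verifies a and b}, so the entire task reduces to verifying that the hypotheses of the template proposition are met in the co-intuitionistic setting. First I would observe that the setup of Corollary \ref{cor:main property sigma} is precisely the setup assumed in Proposition \ref{prop:sigma verifies a and b}: we are given a co-Heyting algebra $\bbA$, a Boolean algebra $\bbB$, an order-embedding $e\colon \bbA\hookrightarrow\bbB$ which is also a lattice homomorphism, and a left adjoint $c\colon \bbB\to \bbA$ satisfying condition \eqref{eq: pdra minus e and c}. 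With $\mathcal{L}_1: = \mathcal{L}_C$, $\mathcal{L}_2: = \mathcal{L}_{S4\Diamond}$, $X: = \mathsf{Prop}$, and $r\colon \bbB^X\to \bbA^X$ given by $U\mapsto (c\circ U)$, these are exactly the data over which Proposition \ref{prop:sigma verifies a and b} is formulated.

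The argument then proceeds in two steps. Invoking Proposition \ref{prop:sigma verifies a and b}, the co-GMT translation $\sigma$ satisfies conditions (a) and (b) of Proposition \ref{prop: main prop of Godel tarski algebraically} for every $\phi\in\mathcal{L}_C$; that is, $e(\val{\phi}_V) = \valb{\sigma(\phi)}_{\overline{e}(V)}$ for every $V\in\bbA^X$, and $\valb{\sigma(\phi)}_U = e(\val{\phi}_{r(U)})$ for every $U\in\bbB^X$. Since these are exactly the premises of Proposition \ref{prop: main prop of Godel tarski algebraically}, applying that proposition with $\tau$ replaced by $\sigma$ yields at once that for all $\phi,\psi\in\mathcal{L}_C$,
\[
\bbA \models \phi\leq \psi \quad \mbox{ iff } \quad \bbB \models \sigma(\phi)\leq \sigma(\psi),
\]
which is the assertion of the corollary.

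There is essentially no obstacle here, since all the genuine work has been discharged by the two antecedent propositions; the corollary is a formal concatenation of them. The only point meriting a word of care is that Proposition \ref{prop: main prop of Godel tarski algebraically} is stated for an arbitrary translation $\tau$ and an arbitrary signature, so its applicability to $\sigma$ is immediate once the conditions (a) and (b) are in hand — this is exactly why the template was proved at that level of generality. Consequently the proof is a one-line deduction: combine Propositions \ref{prop: main prop of Godel tarski algebraically} and \ref{prop:sigma verifies a and b}.
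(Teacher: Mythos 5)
Your proposal is correct and matches the paper exactly: the paper itself presents Corollary \ref{cor:main property sigma} as an immediate consequence of Propositions \ref{prop: main prop of Godel tarski algebraically} and \ref{prop:sigma verifies a and b}, which is precisely the two-step deduction you give. Nothing is missing.
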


As in the previous subsection, we conclude by giving a version of \cite[Theorem 1.15]{mckinsey1946closed} which we need to instantiate Proposition \ref{prop: main prop of Godel tarski algebraically} and for the analysis of canonicity via translation in Section \ref{sec:canonicity}. The proof is dual to that of Proposition \ref{prop:existence of algebras and adjoints:Heyting}.

\begin{prop}\label{prop:existence of algebras and adjoints:coHeyting}
	For every co-Heyting algebra $\bbA$, there exists a Boolean algebra $\bbB$ such that $\bbA$ embeds into $\bbB$ via some order-embedding $e\colon \bbA\hookrightarrow\bbB$ which is  a homomorphism of the lattice reducts of $\bbA$ and $\bbB$, and has a left adjoint $c\colon \bbB\to \bbA$ verifying condition \eqref{eq: pdra minus e and c}. Finally, these facts lift to the canonical extensions of $\bbA$ and $\bbB$ as in the following diagram:
	\begin{center}
		\begin{tikzpicture}[node/.style={circle, draw, fill=black}, scale=1]
		
		\node (A) at (-1.5,-1.5) {$\mathbb{A}$};
		\node (A delta) at (-1.5,1.5) {$\mathbb{A}^{\delta}$};
		\node (B) at (1.5,-1.5) {$\mathbb{B}$};
		\node (B delta) at (1.5,1.5) {$\mathbb{B}^{\delta}$};
		
		\draw [right hook->] (A) to  (A delta);
		\draw [right hook->] (B)  to (B delta);
		\draw [right hook->] (A)  to node[above] {$e$}  (B);
		\draw [right hook->] (A delta)  to node[above] {$e^{\delta}$}  (B delta);
		
		\draw [->] (B delta) to [out=225,in=-45, looseness=0.5]  node[above] {\rotatebox[origin=c]{270}{$\vdash$}} node[below] {$c^{\sigma}$}  (A delta);
		\draw [->] (B) to [out=225,in=-45, looseness=0.5]  node[above] {\rotatebox[origin=c]{270}{$\vdash$}} node[below] {$c$}  (A);
		\draw [->] (B delta) to [out=135,in=45, looseness=1]  node[below] {\rotatebox[origin=c]{270}{$\vdash$}} node[above] {$\iota$}  (A delta);
		\end{tikzpicture}
	\end{center}
\end{prop}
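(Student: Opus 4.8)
The plan is to dualize, step by step, the proof of Proposition \ref{prop:existence of algebras and adjoints:Heyting}, replacing the order of the dual space by its converse throughout. First I would invoke the order-dual of Esakia duality \cite{Es75}: since a co-Heyting algebra is the order-dual of a Heyting algebra, $\bbA$ can be identified with the lattice of clopen up-sets of a Priestley space $\mathbb{X}_\bbA$ which is now a \emph{co-Esakia} space, that is, one in which the \emph{upward}-closure $X{\uparrow}$ of any clopen set $X$ is again clopen (the mirror image of the Esakia condition on downward-closures used in the Heyting case). As before, letting $\bbB$ be the Boolean algebra of all clopen subsets of $\mathbb{X}_\bbA$, the inclusion of clopen up-sets into clopen subsets furnishes an order-embedding $e\colon\bbA\hookrightarrow\bbB$ which is simultaneously a homomorphism of the bounded lattice reducts, establishing the first part of the claim.

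For the adjoint, I would define $c\colon\bbB\to\bbA$ to be the upward-closure map $b\mapsto b{\uparrow}$, which lands in $\bbA$ precisely by the co-Esakia condition. A routine check shows that $c$ is the \emph{left} adjoint of $e$, since $b{\uparrow}$ is the least clopen up-set containing $b$, so that $c(b)\leq a$ iff $b\leq e(a)$ for every clopen up-set $a$. Consequently $e\circ c$ is the upward-closure operator on $\bbB$, which is a closure operator, so that $\Diamond^\bbB:=e\circ c$ endows $\bbB$ with the S4$\Diamond$ structure of Proposition \ref{prop:BAE dia is s4}. Condition \eqref{eq: pdra minus e and c} then reduces to the identity $a\pdra^\bbA b=(\neg^\bbB e(a)\wedge^\bbB e(b)){\uparrow}$, which is exactly the semantic clause for co-implication on up-sets (the left residual of $\vee$) recalled earlier in this subsection; I would verify it directly by unwinding the definitions of $c$, $e$, and the Boolean operations on clopen sets.

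The last and most delicate part is the transfer to canonical extensions, which I expect to be the main obstacle. Since $e$ is a homomorphism of the bounded-lattice reducts it is smooth, so by \cite[Corollary 4.8]{GeHa01} its canonical extension $e^\delta\colon\bbad\to\bbbd$ is a complete lattice embedding, hence is equipped with both a left and a right adjoint. The crux is to identify the left adjoint of $e^\delta$ with the $\sigma$-extension $c^\sigma$ of $c$; this is the order-dual of the identification used in the Heyting case (where the right adjoint of $e^\delta$ is $\iota^\pi$), and it should follow from the order-dual of \cite[Proposition 4.2]{GePr07}, the point being that the adjoint of the canonical extension is computed by the extension of the adjoint that matches the polarity, here $\sigma$ because $c$, as a left adjoint, is join-preserving. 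Granting this, $\bbbd$ carries an S4$\Diamond$ bi-modal structure given by the closure operator $\Diamond_{\geq}^{\bbbd}:=e^\delta\circ c^\sigma$ together with the box $\Box^{\bbbd}:=e^\delta\circ\iota$ built from the right adjoint $\iota$ of $e^\delta$, yielding the two adjunctions displayed in the diagram. The one subtlety warranting care is precisely the verification that it is the $\sigma$-extension, rather than the $\pi$-extension, that computes this adjoint on the closed and open elements of $\bbbd$, which is dictated by the join-preservation of $c$.
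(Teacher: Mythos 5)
Your proposal is correct and follows essentially the same route as the paper, which states only that the proof is dual to that of Proposition \ref{prop:existence of algebras and adjoints:Heyting}; you carry out exactly that dualization (co-Esakia space, clopen up-sets into clopen subsets, $c(b)=b{\uparrow}$ as left adjoint, smoothness of $e$ via \cite[Corollary 4.8]{GeHa01}, and identification of the left adjoint of $e^\delta$ with $c^\sigma$ via the dual of \cite[Proposition 4.2]{GePr07}). Your closing remark correctly pins down the one point needing care, namely that it is the $\sigma$-extension of the join-preserving $c$ that realizes the left adjoint at the level of canonical extensions.
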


\subsubsection{Extending the GMT and co-GMT translations to bi-intuitionistic logic}
\label{sssec: non parametric bi}
In the present subsection we consider the extensions of the GMT and co-GMT translations to  bi-intuitionistic logic (aka Heyting-Brouwer logic according to the terminology of Rauszer \cite{rauszer1974semi} who introduced this logic in the same paper, see also \cite{rauszer1974formalization, rauszer1980algebraic}). The extension $\tau'$ of the GMT translation considered below coincides with the one introduced by Wolter in \cite{wolter1998CoImplication} restricted to the language of bi-intuitionistic logic. The paper \cite{wolter1998CoImplication} considers a modal expansion of bi-intuitionistic logic with box and diamond operators where the extended GMT translation is used to establish a Blok-Esakia result and to transfer properties such as completeness, finite model property and decidability.

The language of bi-intuitionistic logic is given by
\[\mathcal{L}_B\ni\phi:: = p\mid \bot\mid \top\mid \phi\wedge \phi\mid \phi\vee\phi\mid\phi\rightarrow \phi\mid \phi\pdra \phi\]
The language of the normal bi-modal logic S4 is  given by
\[\mathcal{L}_{S4B}\ni\alpha:: = p\mid \bot \mid \alpha\vee \alpha\mid \neg\alpha\mid \Box_{\leq} \alpha\mid \Diamond_{\geq}\alpha\]
The GMT and the co-GMT translations $\tau$ and $\sigma$ can be extended to the bi-intuitionistic language as the maps $\tau', \sigma'\colon \mathcal{L}_B\rightarrow \mathcal{L}_{S4B}$ defined by the following recursions:
\begin{center}
	\begin{tabular}{r c l c r c l}
		$\tau' (p)$ &  = & $\Box_{\leq} p$ & $\qquad$ & $\sigma' (p)$ &  = & $\Diamond_{\geq} p$\\
		$\tau' (\bot)$ &  = & $\bot$ & & $\sigma' (\bot)$ &  = & $\bot$ \\
		$\tau' (\top)$ &  = & $\top$ && $\sigma' (\top)$ &  = & $\top$ \\
		$\tau' (\phi\wedge \psi)$ &  = & $\tau' (\phi)\wedge \tau'(\psi)$ && $\sigma (\phi\wedge \psi)$ &  = & $\sigma' (\phi)\wedge \sigma'(\psi)$  \\
		$\tau' (\phi\vee \psi)$ &  = & $\tau' (\phi)\vee \tau'(\psi)$ && $\sigma' (\phi\vee \psi)$ &  = & $\sigma'(\phi)\vee \sigma'(\psi)$  \\
		$\tau' (\phi \rightarrow \psi)$ &  = & $\Box_{\leq}(\neg \tau' (\phi)\vee \tau'(\psi))$ && $\sigma' (\phi \rightarrow \psi)$ &  = & $\Box_{\leq}(\neg \sigma' (\phi)\vee \sigma'(\psi))$.\\
		$\tau' (\phi \pdra \psi)$ &  = & $\Diamond_{\geq}(\neg \tau' (\phi) \wedge \tau'(\psi))$ &&$\sigma' (\phi \pdra \psi)$ &  = & $\Diamond_{\geq}(\neg\sigma' (\phi) \wedge\sigma'(\psi))$.  \\
	\end{tabular}
\end{center}
Notice that $\tau'$ and $\sigma'$ agree on each defining clause but those relative to the proposition variables.
Let $\bbA$ be a  bi-Heyting algebra  and $\bbB$  a Boolean algebra  such that $e\colon \bbA\hookrightarrow\bbB$ is an order-embedding and a homomorphism of the lattice reducts of $\bbA$ and $\bbB$. Suppose that $e$ has both a left adjoint $c\colon \bbB\to \bbA$ and a right adjoint $\iota\colon \bbB\to \bbA$ such that identities \eqref{eq: rightarrow Rightarrow e and iota} and \eqref{eq: pdra minus e and c} hold for every $a, b\in \bbA$. Then $\bbB$ can be endowed with a natural structure of bi-modal S4-algebra by defining $\Box^\bbB\colon \bbB\to \bbB$ by the assignment $b\mapsto (e\circ \iota)(b)$   and $\Diamond^\bbB\colon \bbB\to \bbB$ by the assignment $b\mapsto (e\circ c)(b)$.
\begin{prop}
	The BAE $(\bbB, \Box^\bbB, \Diamond^\bbB)$, with $\Box^\bbB, \Diamond^\bbB$ defined as above, is normal and  an S4-bimodal algebra.
\end{prop}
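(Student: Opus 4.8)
The plan is to obtain the bimodal claim by assembling the two single-modal results already established and then supplying the one genuinely new ingredient, namely the interaction between $\Box^\bbB$ and $\Diamond^\bbB$. First I would observe that the unary reduct $(\bbB, \Box^\bbB)$ is exactly the algebra of Proposition \ref{prop:BAE is s4}: since $\Box^\bbB = e\circ\iota$ with $\iota$ a right adjoint of $e$ and $e$ a lattice homomorphism, $\Box^\bbB$ preserves finite meets (normality) and satisfies (T) and (K4), so it is a normal S4 box. Dually, $(\bbB, \Diamond^\bbB)$ is the algebra of Proposition \ref{prop:BAE dia is s4}: $\Diamond^\bbB = e\circ c$ with $c$ a left adjoint of $e$, hence $\Diamond^\bbB$ preserves finite joins and satisfies $a\leq\Diamond^\bbB a$ and $\Diamond^\bbB\Diamond^\bbB a\leq\Diamond^\bbB a$, i.e.\ it is a normal S4 diamond.

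What remains, and what makes the structure genuinely bimodal (tense) rather than a mere juxtaposition of two independent S4 operators, is the adjunction $\Diamond^\bbB\dashv\Box^\bbB$, reflecting on the algebraic side the fact that $\Box_{\leq}$ and $\Diamond_{\geq}$ are interpreted using the mutually converse relations $\leq$ and $\geq$. I would establish this directly from the triple adjunction $c\dashv e\dashv\iota$: for all $a,b\in\bbB$,
\[
\Diamond^\bbB a\leq b \iff e(c(a))\leq b \iff c(a)\leq\iota(b) \iff a\leq e(\iota(b)) \iff a\leq\Box^\bbB b,
\]
where the second equivalence uses $e\dashv\iota$ and the third uses $c\dashv e$. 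From this adjunction the two tense interaction axioms $a\leq\Box^\bbB\Diamond^\bbB a$ and $\Diamond^\bbB\Box^\bbB a\leq a$ follow immediately by instantiating the unit and the counit.

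The hard part here is essentially conceptual rather than computational: one must correctly identify which axioms the phrase \emph{S4-bimodal algebra} packages---in this bi-intuitionistic setting it is the tense S4 system, in which the two S4 operators are linked by the adjunction above---and then check that nothing beyond the triple adjunction $c\dashv e\dashv\iota$ and $e$ being a lattice homomorphism is needed. Since those hypotheses are exactly what the ambient assumptions on $e$, $c$ and $\iota$ guarantee, the verification reduces to the short adjunction computation displayed above together with the two previously proved propositions, and I would simply note that the remaining routine checks (normality of each operator and the reflexivity and transitivity axioms) are inherited verbatim from Propositions \ref{prop:BAE is s4} and \ref{prop:BAE dia is s4}.
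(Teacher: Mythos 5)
Your proposal is correct. The paper actually states this proposition without any proof, and the evidently intended argument is exactly your first step: the two unary reducts are the algebras of Propositions \ref{prop:BAE is s4} and \ref{prop:BAE dia is s4}, so normality and the (T) and (K4) axioms for each operator are inherited verbatim. What you add beyond that is the verification of the adjunction $\Diamond^\bbB\dashv\Box^\bbB$, obtained by chaining $c\dashv e$ and $e\dashv\iota$; the computation is right, and it is indeed the only genuinely \emph{bimodal} content of the statement, since on the frame side $\Box_{\leq}$ and $\Diamond_{\geq}$ are interpreted along mutually converse relations and hence must form a tense (adjoint) pair. The paper never pins down formally whether ``S4-bimodal algebra'' is meant to include these interaction axioms, so your reading is an interpretive choice, but it is the natural one, it is consistent with everything the paper does with these operators, and your argument subsumes the weaker reading (two independent S4 modalities) in any case. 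In short: same route as the paper where the paper says anything, plus a correct and worthwhile completion of the part it leaves implicit.
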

The following proposition show that Proposition \ref{prop: main prop of Godel tarski algebraically} applies to $\tau'$ and $\sigma'$. We let $X:= \mathsf{Prop}$. The proof is similar to those of Propositions \ref{prop:tau verifies a and b}  and \ref{prop:sigma verifies a and b}, and is omitted.
\begin{prop}
	\label{prop: sigma prime tau prime satisfy a and b}
	The translation $\tau'$ (resp.\ $\sigma'$) defined above satisfies conditions (a) and (b)  of Proposition \ref{prop: main prop of Godel tarski algebraically} relative to $r: \bbB^X\to \bbA^X$ defined by $U\mapsto (\iota\circ U)$ (resp.\ defined by $U\mapsto (c\circ U)$).
\end{prop}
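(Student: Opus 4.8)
The plan is to prove conditions (a) and (b) for both $\tau'$ and $\sigma'$ by a single induction on the complexity of $\phi$, run in parallel for the two translations. This is natural because, as already observed, $\tau'$ and $\sigma'$ agree on every defining clause except the one for proposition variables, so all the inductive steps are literally shared; only the base case and the accompanying $r$-map differ. The two order-theoretic facts I would rely on throughout are that, since $e$ is an order-embedding, the identities $e\circ\iota\circ e = e$ and $e\circ c\circ e = e$ force $\iota\circ e = \id_\bbA$ and $c\circ e = \id_\bbA$, together with the definitions $\Box^\bbB = e\circ\iota$ and $\Diamond^\bbB = e\circ c$.

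For the base case $\phi := p$, condition (a) for $\tau'$ reduces to $\valb{\Box_\leq p}_{\overline{e}(V)} = \Box^\bbB((e\circ V)(p)) = (e\circ\iota\circ e)(V(p)) = e(V(p))$, while condition (b) for $\tau'$, with $r(U) = \iota\circ U$, reduces to $e((\iota\circ U)(p)) = (e\circ\iota)(\valb{p}_U) = \valb{\Box_\leq p}_U$; the base case for $\sigma'$ is the exact dual, replacing $\Box_\leq, \iota, \Box^\bbB$ by $\Diamond_\geq, c, \Diamond^\bbB$ and using $c\circ e = \id_\bbA$ and $r(U) = c\circ U$. These are precisely the base cases of Propositions \ref{prop:tau verifies a and b} and \ref{prop:sigma verifies a and b}. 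The inductive cases for $\bot, \top, \wedge, \vee$ use only that $e$ is a homomorphism of the lattice reducts, so that it commutes with bounds and with the binary lattice operations, together with the induction hypothesis. The two cases carrying genuine content are $\rightarrow$ and $\pdra$, and since both translations define these clauses identically, each need only be verified once: the $\rightarrow$-step is verbatim the computation in Proposition \ref{prop:tau verifies a and b}, driven by identity \eqref{eq: rightarrow Rightarrow e and iota} and the folding of $e\circ\iota$ into $\Box^\bbB$, while the $\pdra$-step is the order-dual computation from Proposition \ref{prop:sigma verifies a and b}, driven by identity \eqref{eq: pdra minus e and c} and the folding of $e\circ c$ into $\Diamond^\bbB$.

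The one feature that is genuinely new relative to the two earlier propositions --- and the point I would check most carefully --- is that the bi-intuitionistic language forces a single induction to call on both adjoints at once: the $\rightarrow$-clause invokes the right adjoint $\iota$ and the $\pdra$-clause the left adjoint $c$, irrespective of which translation is under consideration. In particular, when verifying condition (b) for $\sigma'$, where $r(U) = c\circ U$, the $\rightarrow$-step still uses $\iota$, and dually the $\pdra$-step in the verification of (b) for $\tau'$ uses $c$ even though $r(U) = \iota\circ U$ there. These two roles never interfere, because the $r$-map enters the argument solely as the $\bbA$-side valuation at which subformulas are evaluated --- hence only through the induction hypothesis --- whereas the adjoint appearing inside a translation clause is dictated by that clause and is applied on the $\bbB$-side. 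Granting this independence, the induction closes by combining the intuitionistic and co-intuitionistic computations with no further work, which is exactly why the proof can be omitted in the main text.
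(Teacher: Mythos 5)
Your proposal is correct and follows exactly the route the paper intends: the paper omits this proof precisely because it is the combination of the base cases and the $\rightarrow$- and $\pdra$-steps of Propositions \ref{prop:tau verifies a and b} and \ref{prop:sigma verifies a and b}, which is what you assemble. Your key observation --- that the map $r$ enters only through the induction hypothesis while the adjoint used in each clause is dictated by the clause itself, so the two roles never interfere --- is exactly the point that makes the single induction close, and it is the right thing to have checked.
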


Thanks to the proposition above,  Proposition \ref{prop: main prop of Godel tarski algebraically} applies to both $\tau'$ and $\sigma'$ provided suitable embeddings of bi-Heyting algebras into Boolean algebras exist. The existence of such  embeddings is proven by Rauszer in \cite[Section 4]{rauszer1974semi}. We now give a reformulation of this result in the format required to instantiate Proposition \ref{prop: main prop of Godel tarski algebraically} and highlighting the compatibility of this embedding with canonical extensions --- as well as a duality-based proof.
\begin{prop}\label{prop:existence of algebras and adjoints:biHeyting}
	For every bi-Heyting algebra $\bbA$, there exists a Boolean algebra $\bbB$ such that $\bbA$ embeds into $\bbB$ via some order-embedding $e\colon \bbA\hookrightarrow\bbB$ which is also a homomorphism of the lattice reducts of $\bbA$ and $\bbB$ and has both a left adjoint $c\colon \bbB\to \bbA$ and a right adjoint $\iota\colon \bbB\to \bbA$ verifying conditions \eqref{eq: rightarrow Rightarrow e and iota} and \eqref{eq: pdra minus e and c}. Finally, all these facts lift to the canonical extensions of $\bbA$ and $\bbB$ as in the following diagram:
	\begin{center}
		\begin{tikzpicture}[node/.style={circle, draw, fill=black}, scale=1]
		
		\node (A) at (-1.5,-1.5) {$\mathbb{A}$};
		\node (A delta) at (-1.5,1.5) {$\mathbb{A}^{\delta}$};
		\node (B) at (1.5,-1.5) {$\mathbb{B}$};
		\node (B delta) at (1.5,1.5) {$\mathbb{B}^{\delta}$};
		
		\draw [right hook->] (A) to  (A delta);
		\draw [right hook->] (B)  to (B delta);
		\draw [right hook->] (A)  to node[above] {$e$}  (B);
		\draw [right hook->] (A delta)  to node[above] {$e^{\delta}$}  (B delta);
		
		\draw [->] (B delta) to [out=135,in=45, looseness=1]  node[below] {\rotatebox[origin=c]{270}{$\vdash$}} node[above] {$\iota^{\pi}$}  (A delta);
		\draw [->] (B) to [out=135,in=45, looseness=1]  node[below] {\rotatebox[origin=c]{270}{$\vdash$}} node[above] {$\iota$}  (A);
		\draw [->] (B delta) to [out=225,in=-45, looseness=1]  node[above] {\rotatebox[origin=c]{270}{$\vdash$}} node[below] {$c^{\sigma}$}  (A delta);
		\draw [->] (B) to [out=225,in=-45, looseness=1]  node[above] {\rotatebox[origin=c]{270}{$\vdash$}} node[below] {$c$}  (A);
		\end{tikzpicture}
	\end{center}
\end{prop}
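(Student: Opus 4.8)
The plan is to obtain the statement by merging the constructions used in the proofs of Propositions \ref{prop:existence of algebras and adjoints:Heyting} and \ref{prop:existence of algebras and adjoints:coHeyting}, exploiting the fact that a bi-Heyting algebra is simultaneously a Heyting and a co-Heyting algebra and that its dual space carries the Esakia and the co-Esakia conditions at once. First I would invoke bi-Esakia duality to identify $\bbA$ with the lattice of clopen up-sets of its dual space $\mathbb{X}_\bbA$, a Priestley space in which both the downward closure and the upward closure of any clopen set is again clopen. I would then take $\bbB$ to be the Boolean algebra of all clopen sub\emph{sets} of $\mathbb{X}_\bbA$ and let $e\colon \bbA\hookrightarrow\bbB$ be the inclusion; since every clopen up-set is in particular a clopen subset, $e$ is an order-embedding and a homomorphism of the lattice reducts, exactly as in the Heyting case.

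Next I would define the two adjoints explicitly on the side of the dual space: the right adjoint by $\iota(b) := \neg((\neg b){\downarrow})$ and the left adjoint by $c(b) := b{\uparrow}$, where $b$ is identified with the corresponding clopen set and the closures are taken with respect to the order of $\mathbb{X}_\bbA$. The co-Esakia condition guarantees that $c(b)$ is a clopen up-set and the Esakia condition that $\iota(b)$ is, so both maps indeed land in $\bbA$. A routine verification, identical to those in the two previous propositions, then yields the chain of adjunctions $c\dashv e\dashv \iota$ together with conditions \eqref{eq: rightarrow Rightarrow e and iota} and \eqref{eq: pdra minus e and c}: the former holds because $\iota$ computes the largest clopen up-set below its argument, recovering the Heyting implication, and the latter because $c$ computes the least clopen up-set above its argument, recovering the co-implication interpreted as the left residual of $\vee$.

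Finally, to transfer everything to the canonical extensions I would argue as in the two preceding proofs, but on both sides simultaneously. Since $e$ is a homomorphism of the lattice reducts, it is smooth and its canonical extension $e^\delta\colon \bbad\to\bbbd$ is an order-embedding and a complete homomorphism of the lattice reducts (cf.\ \cite[Corollary 4.8]{GeHa01}); being a complete lattice homomorphism, $e^\delta$ preserves arbitrary joins and arbitrary meets, and hence possesses both a left and a right adjoint. It then remains only to identify these with $c^\sigma$ and $\iota^\pi$ respectively: the right adjoint of $e^\delta$ coincides with $\iota^\pi$ by \cite[Proposition 4.2]{GePr07}, and the left adjoint coincides with $c^\sigma$ by the order-dual of that result (as already used in the proof of Proposition \ref{prop:existence of algebras and adjoints:coHeyting}). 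I expect the only genuinely delicate point to be this last identification of the canonical adjoints, in particular making sure that the $\sigma$- and the $\pi$-extension are the appropriate ones for the lower and the upper adjoint respectively; but since each side has already been handled in isolation in the Heyting and co-Heyting cases, here they simply coexist on the single self-dual space $\mathbb{X}_\bbA$, and no new interaction between $c$ and $\iota$ needs to be checked.
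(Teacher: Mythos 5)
Your proposal is correct and follows essentially the same route as the paper's proof: bi-Esakia duality, the Boolean algebra of clopen subsets of $\mathbb{X}_\bbA$, the explicit definitions $c(b)=b{\uparrow}$ and $\iota(b)=\neg((\neg b){\downarrow})$ made legitimate by the clopenness of upward and downward closures, and the lift to canonical extensions via smoothness of $e$, \cite[Corollary 4.8]{GeHa01} and \cite[Proposition 4.2]{GePr07}. The only difference is cosmetic: you spell out slightly more explicitly why the adjoints recover $\rightarrow$ and $\pdra$, where the paper leaves this as a routine verification.
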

\begin{proof}
	Via  Esakia-type duality \cite{Es75, wolter1998CoImplication}, the bi-Heyting algebra $\bbA$ can be identified  with the algebra of clopen up-sets of its associated dual space $\mathbb{X}_\bbA$  (referred to here as a a bi-Esakia space), which is a Priestley space, hence a Stone space. Let $\bbB$ be the Boolean algebra of the clopen sub{\em sets} of $\mathbb{X}_\bbA$. Since any clopen up-set is in particular a clopen subset,  a natural order embedding $e: \bbA\hookrightarrow\bbB$ exists, which is also a lattice homomorphism between $\bbA$ and $\bbB$. This shows the first part of the claim. As to the second part, bi-Esakia spaces are Priestley spaces such that both  the upward-closure and the downward-closure of a clopen set is a clopen set.
	
	Therefore, we can define the maps  $c\colon \bbB\to \bbA$  and $\iota\colon \bbB\to \bbA$ by the assignments $b\mapsto  b{\uparrow}$ and $b\mapsto  \neg((\neg b){\downarrow})$ respectively, where $b$ is identified with its corresponding clopen set in $\mathbb{X}_\bbA$, $\neg b$ is defined as the relative complement of $b$ in $\mathbb{X}_\bbA$, and  $b{\uparrow}$ and $(\neg b){\downarrow}$ are defined using the order in $\mathbb{X}_\bbA$. It can be readily verified that $c$ and $\iota$ are the left and right adjoints of $e$ respectively, and that moreover  conditions \eqref{eq: rightarrow Rightarrow e and iota} and \eqref{eq: pdra minus e and c} hold. 
	
	
	Finally, $e:\bbA\rightarrow\bbB$ being also a homomorphism between the lattice reducts of $\bbA$ and $\bbB$ implies that $e$ is smooth and its canonical extension $e^\delta\colon \bbad\to \bbbd$, besides being an order-embedding, is a complete homomorphism between the lattice reducts of $\bbad$ and $\bbbd$ (cf.\  \cite[Corollary 4.8]{GeHa01}), and hence is endowed with both a left and a right adjoint. Furthermore, the left (resp.\ right) adjoint of $e^\delta$ coincides with $c^\sigma$ (resp.\ with $\iota^\pi$) (cf.\  \cite[Proposition 4.2]{GePr07}). Hence, $\bbbd$ can be naturally endowed with the structure of an S4 bi-modal algebra by defining $\Box_{\leq}^{\bbbd}\colon \bbbd\to \bbbd$ by the assignment $u\mapsto (e^\delta\circ \iota^\pi)(u)$, and $\Diamond_{\geq}^{\bbbd}\colon \bbbd\to \bbbd$ by the assignment $u\mapsto (e^\delta\circ c^\sigma)(u)$.
\end{proof}

\subsection{Parametric GMT translations}\label{Sec:ParamGMTTrans}

In this section we extend the previous translations to a parametric set of GMT translations for each normal DLE- and bHAE-logic (cf.~Section \ref{sec:preliminaries}). 
Parametric GMT translations were already considered in \cite{GeNaVe05} in the context of one specific DLE signature, namely that of Distributive Modal Logic, where they are used to prove the transfer of correspondence results for $\epsilon$-Sahlqvist inequalities in every order-type $\epsilon$ (cf.~Definition \ref{Inducive:Ineq:Def}). We generalize this idea to arbitrary DLE-logics, and explore the additional properties of GMT translations in the setting of bHAE-logics.


\subsubsection{Parametric GMT translations for normal DLE-logics}\label{sssection:parametric DLE}
In the present section we consider parametric GMT translations for the general DLE-setting. We will use the following notation: for every Boolean algebra $\bbB$, $n$-tuple $\overline{b}$ of elements of $\bbB$ and every order-type $\eta$ on $n$, we let $\overline{b}^\eta: = (b'_i)_{i = 1}^n$ where $b'_i = b_i$ if $\eta(i) = 1$ and $b'_i = \neg b_i$ if $\eta(i) = \partial$. Let us fix a normal DLE-signature $\mathcal{L}_{\mathrm{DLE}} = \mathcal{L}_{\mathrm{DLE}}(\mathcal{F}, \mathcal{G})$. We first identify the target language for these translations.  This is the  normal BAE-signature $\mathcal{L}^{\circ}_{\mathrm{BAE}} = \mathcal{L}_{\mathrm{BAE}}(\mathcal{F}^{\circ}, \mathcal{G}^{\circ})$ associated with $\mathcal{L}_{\mathrm{DLE}}$, where $\mathcal{F}^{\circ}: = \{\Diamond_\geq\}\cup
\{f^{\circ}\mid f\in \mathcal{F}\}$, and $\mathcal{G}^{\circ}: = \{\Box_\leq\}\cup
\{g^{\circ}\mid g\in \mathcal{G}\}$, and for every $f\in \mathcal{F}$ (resp.\ $g\in \mathcal{G}$), the  connective $f^{\circ}$ (resp.\ $g^{\circ}$) is such that $n_{f^{\circ}} = n_{f}$ (resp.\ $n_{g^{\circ}} = n_{g}$) and $\epsilon_{f^{\circ}}(i) = 1$ for each $1\leq i\leq n_f$ (resp.\ $\epsilon_{g^{\circ}}(i) = 1$ for each $1\leq i\leq n_g$).

We assume that an order-embedding $e\colon \bbA\hookrightarrow\bbB$ exists, which is a homomorphism of the lattice reducts of  $\bbA$ and $\bbB$, such that  both the left and right adjoint $c \colon \bbB\to\bbA$ and $\iota\colon \bbB\to \bbA$ exist
%
and moreover the following diagrams commute for every $f\in \mathcal{F}$ and $g\in \mathcal{G}$:\footnote{Notice that equations \eqref{eq: rightarrow Rightarrow e and iota} and \eqref{eq: pdra minus e and c} encode the special cases of the commutativity of the diagrams \eqref{eq: e and c and iota and f and g} for $f(\phi, \psi): = \phi\pdra\psi$ (in which case, $f^\circ(\neg\alpha, \beta): = \neg\alpha\wedge \beta$) and $g(\phi, \psi): = \phi\rightarrow\psi$ (in which case, $g^\circ(\neg \alpha, \beta): = \neg \alpha\vee \beta$).}
\begin{equation}\label{eq: e and c and iota and f and g}
\begin{array}{ccc}

\begin{CD}
\bbA^{n_f} @>e^{\epsilon_f}>> \bbB^{n_f}\\
@VVf^{\bbA}V @VV{f^\circ}^{\bbB}V\\
\bbA @<c<< \bbB
\end{CD}
&\quad&

\begin{CD}
\bbA^{n_g} @>e^{\epsilon_g}>> \bbB^{n_g}\\
@VVg^{\bbA}V @VV{g^\circ}^{\bbB}V\\
\bbA @<\iota<< \bbB
\end{CD}
\\
\end{array}
\end{equation}
where $e^{\epsilon_f}(\overline{a}): = \overline{e(a)}^{\epsilon_f}$ and $e^{\epsilon_g}(\overline{a}): = \overline{e(a)}^{\epsilon_g}$.
Then, as discussed early on, the Boolean reduct of $\bbB$ can be endowed with a natural structure of bi-modal S4-algebra by defining $\Box^\bbB\colon \bbB\to \bbB$ by the assignment $b\mapsto (e\circ \iota)(b)$   and $\Diamond^\bbB\colon \bbB\to \bbB$ by the assignment $b\mapsto (e\circ c)(b)$.

The target language  for the parametrized GMT translations over $\mathsf{Prop}$ is given by
\[\mathcal{L}^\circ_{BAE}\ni\alpha:: = p\mid \bot \mid \alpha\vee \alpha\mid\alpha\wedge \alpha \mid\neg\alpha\mid f^\circ(\overline{\alpha}) \mid g^\circ(\overline{\alpha})\mid \Diamond_{\geq}\alpha\mid \Box_{\leq}\alpha.\]

Let $X: = \mathsf{Prop}$. For any order-type $\epsilon$ on $X$, define the translation $\tau_\epsilon : \mathcal{L}_{\mathrm{DLE}}\to \mathcal{L}^\circ_{\mathrm{BAE}}$ by the following recursion:

\begin{center}
	\begin{tabular}{c c c}
		$
		\tau_\epsilon(p) = \begin{cases} \Box_{\leq}  p &\mbox{if } \epsilon(p) = 1 \\
		\Diamond_{\geq} p & \mbox{if } \epsilon(p) = \partial, \end{cases}
		$
		& $\quad$ &

		\begin{tabular}{r c l}
			$\tau_\epsilon (\bot)$ &  = & $\bot$ \\
			$\tau_\epsilon (\top)$ &  = & $\top$ \\
			$\tau_\epsilon (\phi\wedge \psi)$ &  = & $\tau_\epsilon (\phi)\wedge \tau_\epsilon(\psi)$  \\
			$\tau_\epsilon (\phi\vee \psi)$ &  = & $\tau_\epsilon (\phi)\vee \tau_\epsilon(\psi)$  \\
			$\tau_\epsilon (f(\overline{\phi}))$ &  = & $\Diamond_{\geq}f^\circ(\overline{\tau_\epsilon (\phi)}^{\epsilon_f})$  \\
			$\tau_\epsilon (g(\overline{\phi}))$ &  = & $\Box_{\leq}g^\circ(\overline{\tau_\epsilon (\phi)}^{\epsilon_g})$  \\
		\end{tabular}
		\\
	\end{tabular}
\end{center}
where for each order-type $\eta$ on $n$ and any $n$-tuple $\overline{\psi}$ of $\mathcal{L}^\circ_{\mathrm{BAE}}$-formulas,  $\overline{\psi}^\eta$ denotes the $n$-tuple $(\psi'_i)_{i = 1}^n$, where $\psi_i' = \psi_i$  if $\eta(i) = 1$ and $\psi_i' = \neg \psi_i$  if $\eta(i) = \partial$.\label{notation:polarity bookkeeping}

Let $\bbA$ be a $\mathcal{L}_{\mathrm{DLE}}$-algebra and $\bbB$ be a  $\mathcal{L}^\circ_{\mathrm{BAE}}$-algebra such that an order-embedding $e\colon \bbA\hookrightarrow\bbB$ exists, which is a homomorphism of the lattice-reducts of $\bbA$ and $\bbB$, is endowed with  both  right and left adjoints, and satisfies the commutativity of the diagrams \eqref{eq: e and c and iota and f and g}  for every $f\in \mathcal{F}$ and $g\in \mathcal{G}$. For every order-type $\epsilon$ on $X$, consider the map $r_\epsilon: \bbB^X\to \bbA^X$ defined, for any $U\in \bbB^X$ and $p\in X$, by:
\[
r_\epsilon (U)(p) = \begin{cases}
(\iota\circ U) (p) & \mbox{if } \epsilon(p) = 1\\
(c\circ U) (p) & \mbox{if } \epsilon(p) = \partial\\
\end{cases}
\]
\begin{prop}
	\label{prop:  tau epsilon for DLE satisfies a and b}
	For every order-type $\epsilon$ on $X$, the translation $\tau_\epsilon$  defined above satisfies conditions (a) and (b)  of Proposition \ref{prop: main prop of Godel tarski algebraically} relative to $r_\epsilon$.
\end{prop}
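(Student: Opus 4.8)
The plan is to prove conditions (a) and (b) simultaneously by induction on the structure of $\phi \in \mathcal{L}_{\mathrm{DLE}}$, keeping the order-type $\epsilon$ fixed throughout. Recall that condition (a) reads $e(\val{\phi}_V) = \valb{\tau_\epsilon(\phi)}_{\overline{e}(V)}$ and condition (b) reads $\valb{\tau_\epsilon(\phi)}_U = e(\val{\phi}_{r_\epsilon(U)})$. The two key algebraic facts I would use are, on the one hand, the adjunction identities $e\circ\iota\circ e = e$ and $e\circ c\circ e = e$ (so that $\iota\circ e = c\circ e = \id_{\bbA}$), and on the other hand, the reformulations of the commuting diagrams \eqref{eq: e and c and iota and f and g}, namely $f^{\bbA} = c\circ (f^\circ)^{\bbB}\circ e^{\epsilon_f}$ and $g^{\bbA} = \iota\circ (g^\circ)^{\bbB}\circ e^{\epsilon_g}$, together with the definitions $\Diamond^{\bbB} = e\circ c$ and $\Box^{\bbB} = e\circ \iota$.

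For the base case $\phi := p$, I would split according to $\epsilon(p)$: this is exactly where the design of $\tau_\epsilon$ and $r_\epsilon$ is tailored so that the two halves match. If $\epsilon(p) = 1$, then $\tau_\epsilon(p) = \Box_{\leq} p$ and $r_\epsilon(U)(p) = (\iota\circ U)(p)$; condition (a) then reduces to $\Box^{\bbB}(e(V(p))) = (e\circ\iota\circ e)(V(p)) = e(V(p))$, and condition (b) to $\Box^{\bbB}(U(p)) = (e\circ\iota)(U(p)) = e(\iota(U(p)))$. The case $\epsilon(p) = \partial$ is symmetric, using $\Diamond_{\geq}$, $c$, and $e\circ c\circ e = e$. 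The cases $\phi\in\{\bot,\top\}$ and $\phi = \psi\wedge\chi$, $\phi = \psi\vee\chi$ are routine: they follow from $e$ being a homomorphism of bounded lattices and from $\val{\cdot}$ and $\valb{\cdot}$ being homomorphic extensions, exactly as in the corresponding steps of Proposition \ref{prop:tau verifies a and b}.

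The heart of the argument is the inductive step for $\phi = f(\overline{\phi})$ (the case $g(\overline{\phi})$ being dual). For condition (a), I would start from $\val{f(\overline{\phi})}_V = f^{\bbA}(\overline{\val{\phi}_V})$, rewrite $f^{\bbA}$ via the diagram as $c\circ (f^\circ)^{\bbB}\circ e^{\epsilon_f}$, and then observe that applying $e^{\epsilon_f}$ coordinatewise produces, in coordinate $i$, either $e(\val{\phi_i}_V)$ (when $\epsilon_f(i) = 1$) or $\neg e(\val{\phi_i}_V)$ (when $\epsilon_f(i) = \partial$). The induction hypothesis (a) converts each $e(\val{\phi_i}_V)$ into $\valb{\tau_\epsilon(\phi_i)}_{\overline{e}(V)}$, and since $\valb{\cdot}_{\overline{e}(V)}$ commutes with Boolean negation in $\bbB$, these coordinates are precisely $\valb{\,\overline{\tau_\epsilon(\phi)}^{\epsilon_f}}_{\overline{e}(V)}$. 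Pushing this through $(f^\circ)^{\bbB}$ and then $e\circ c = \Diamond^{\bbB}$ yields $\valb{\Diamond_{\geq}f^\circ(\overline{\tau_\epsilon(\phi)}^{\epsilon_f})}_{\overline{e}(V)} = \valb{\tau_\epsilon(f(\overline{\phi}))}_{\overline{e}(V)}$, as required. Condition (b) runs along the same lines in the opposite direction: expand $\valb{\tau_\epsilon(f(\overline{\phi}))}_U$ using $\Diamond^{\bbB} = e\circ c$ and the homomorphic clause for $f^\circ$, apply the induction hypothesis (b) coordinatewise, recognize the result as $e^{\epsilon_f}(\overline{\val{\phi}_{r_\epsilon(U)}})$, and then collapse $c\circ(f^\circ)^{\bbB}\circ e^{\epsilon_f}$ back to $f^{\bbA}$ via the diagram to obtain $e(\val{f(\overline{\phi})}_{r_\epsilon(U)})$. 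I expect the main obstacle to be bookkeeping rather than conceptual: one must verify that the polarity-flipping superscript $\overline{\cdot}^{\epsilon_f}$ built into the translation lines up exactly with the negations introduced by $e^{\epsilon_f}$, and that the single fixed $\epsilon$ threads consistently through the recursion, so that the choice of $\Box$ versus $\Diamond$ (and of $\iota$ versus $c$) made at the leaves is the one needed for the diagrams to close.
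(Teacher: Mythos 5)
Your proposal is correct and follows essentially the same route as the paper's proof: induction on $\phi$ with the base case split on $\epsilon(p)$ using $e\circ\iota\circ e = e$ and $e\circ c\circ e = e$, and the $f(\overline{\phi})$ step handled by rewriting $f^{\bbA}$ as $c\circ (f^\circ)^{\bbB}\circ e^{\epsilon_f}$ via the commuting diagrams \eqref{eq: e and c and iota and f and g} and then applying the induction hypothesis coordinatewise before collapsing $e\circ c$ into $\Diamond^{\bbB}$. The polarity bookkeeping you flag as the main obstacle is indeed the only delicate point, and it is resolved exactly as you describe: the negations introduced by $e^{\epsilon_f}$ match the $\overline{\cdot}^{\epsilon_f}$ superscript in the definition of $\tau_\epsilon$ because $\valb{\cdot}$ commutes with Boolean negation in $\bbB$.
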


\begin{proof}
	By induction on $\phi$. As for the base case, let $\phi: = p\in \mathsf{Prop}$. If $\epsilon(p) = \partial$, then  for any $U\in \bbB^X$ and   $V\in\bbA^X$,
	\begin{center}
		\begin{tabular}{r c l l r c l l}
			$e(\val{p}_{r_\epsilon(U)})$ & = & $e((c\circ U)(p))$ &(def.\ of $r_\epsilon$) &$\valb{\tau_\epsilon(p)}_{\overline{e}(V)}$ & = & $\valb{\Diamond_{\geq} p}_{\overline{e}(V)}$ & (def.\ of $\tau_\epsilon$) \\
			& = & $(e\circ c)\valb{p}_U$    &(assoc.\ of $\circ$) && = & $\Diamond^\bbB\valb{p}_{\overline{e}(V)}$ & (def.\ of $\valb{\cdot}_U$)\\
			& = & $\Diamond^\bbB\valb{p}_U$ & (def.\ of $\Diamond^\bbB$) && = & $\Diamond^\bbB((e\circ V)(p))$ & (def.\ of $\overline{e}(V)$) \\
			& = & $\valb{\Diamond_{\geq} p}_U$ & (def.\ of $\valb{\cdot}_U$) && = & $(e\circ c)((e\circ V)(p))$ & (def.\ of $\Diamond^\bbB$) \\
			& = & $\valb{\tau_\epsilon( p)}_U$. &(def.\ of $\tau_\epsilon$) && = & $e((c \circ e)(V(p)))$ & (assoc.\ of $\circ$)\\
			&&&&& = & $e(V(p))$ & ($e\circ(c\circ e) = e$)\\
			&&&&& = & $e(\val{p}_V)$. & (def.\ of $\val{\cdot}_V$) \\
		\end{tabular}
	\end{center}
	If  $\epsilon(p) = 1$, then for any $U\in \bbB^X$ and   $V\in\bbA^X$,
	\begin{center}
		\begin{tabular}{r c l l r c l l}
			$e(\val{p}_{r_\epsilon(U)})$ & = & $e((\iota\circ U)(p))$ &(def.\ of $r_\epsilon$) &$\valb{\tau_\epsilon(p)}_{\overline{e}(V)}$ & = & $\valb{\Box_{\leq} p}_{\overline{e}(V)}$ & (def.\ of $\tau_\epsilon$) \\
			& = & $(e\circ \iota)\valb{p}_U$    &(assoc.\ of $\circ$) && = & $\Box^\bbB\valb{p}_{\overline{e}(V)}$ & (def.\ of $\valb{\cdot}_U$)\\
			& = & $\Box^\bbB\valb{p}_U$ & (def.\ of $\Box^\bbB$) && = & $\Box^\bbB((e\circ V)(p))$ & (def.\ of $\overline{e}(V)$) \\
			& = & $\valb{\Box_{\leq} p}_U$ & (def.\ of $\valb{\cdot}_U$) && = & $(e\circ \iota)((e\circ V)(p))$ & (def.\ of $\Box^\bbB$) \\
			& = & $\valb{\tau_\epsilon( p)}_U$. &(def.\ of $\tau_\epsilon$) && = & $e((\iota \circ e)(V(p)))$ & (assoc.\ of $\circ$)\\
			&&&&& = & $e(V(p))$ & ($e\circ(\iota\circ e) = e$)\\
			&&&&& = & $e(\val{p}_V)$. & (def.\ of $\val{\cdot}_V$) \\
		\end{tabular}
	\end{center}
	Let $\phi: = f(\overline{\phi})$. Then  for any $U\in \bbB^X$ and   $V\in\bbA^X$,
	\begin{center}
		\begin{tabular}{r c l l r c l l}
			&   &$e(\val{f(\overline{\phi})}_{r_\epsilon(U)})$  &                                         & &   & $\valb{\tau_\epsilon(f(\overline{\phi}))}_{\overline{e}(V)}$ \\
			& = & $e(f(\overline{\val{\phi}_{r_\epsilon(U)}}))$ &(def.\ of $\val{\cdot}_{r_\epsilon(U)}$) & & = & $\valb{\Diamond_{\geq}f^\circ(\overline{\tau_\epsilon(\phi)}^{\epsilon_f})}_{\overline{e}(V)}$ & (def.\ of $\tau_\epsilon$) \\
			& = & $e(c\circ f^\circ(\overline{e(\val{\phi}_{r_\epsilon(U)})}^{\epsilon_f})$    &(assump.\ \eqref{eq: e and c and iota and f and g}) && = & $\Diamond^{\mathbb{B}}f^\circ(\overline{\valb{\tau_\epsilon(\phi)}_{\overline{e}(V)}}^{\epsilon_f})$ & (def.\ of $\valb{\cdot}_{\overline{e}(V)}$)\\
			& = & $\Diamond^{\mathbb{B}}f^\circ(\overline{\valb{\tau_\epsilon(\phi)}_{U}}^{\epsilon_f})$  & (IH \& def.~ of $\Diamond^{\mathbb{B}}$) && = & $\Diamond^{\mathbb{B}}f^\circ(\overline{e(\val{\phi}_V)}^{\epsilon_f})$ & (IH) \\
			& = & $\valb{\Diamond_{\geq}f^\circ(\overline{\tau_\epsilon(\phi)}^{\epsilon_f})}_U$ & (def.\ of $\valb{\cdot}_U$) && = &
			$e (c \circ f^\circ(\overline{e(\val{\phi}_V)}^{\epsilon_f}))$ & (def.\ of $\Diamond^{\mathbb{B}}$)\\
			& = & $\valb{\tau_\epsilon(f(\overline{\phi}))}_U$. &(def.\ of $\tau_\epsilon$) && = &
			$e(f(\overline{\val{\phi}_V}))$ & (assump.\ \eqref{eq: e and c and iota and f and g}) \\
			& & & && = &
			$e(\val{f(\overline{\phi})}_V)$ & (def.\ of $\val{\cdot}_V$)\\
		\end{tabular}
	\end{center}
	
	The remaining cases are analogous and are omitted.
\end{proof}
As a consequence of the proposition above, Proposition \ref{prop: main prop of Godel tarski algebraically} applies to  $\tau_\epsilon$ for any order-type $\epsilon$ on $X$. Hence:
\begin{cor}\label{cor:main theorem for tau epsilon DLE}
	Let $\bbA$ be a $\mathcal{L}_{\mathrm{DLE}}$-algebra. If an embedding $e:\bbA\hookrightarrow\bbB$ exists into a  $\mathcal{L}^\circ_{\mathrm{BAE}}$-algebra $\bbB$ which is a homomorphism of the lattice reducts of $\bbA$ and $\bbB$, and $e$ has both a right adjoint $\iota\colon \bbB\to \bbA$ and a left adjoint $c\colon \bbB\to \bbA$ satisfying the commutativity of the  diagrams \eqref{eq: e and c and iota and f and g} for every $f\in \mathcal{F}$ and $g\in \mathcal{G}$, then for any $\mathcal{L}_{\mathrm{DLE}}$-inequality $\phi \leq \psi$,
	\[
	\bbA\models \phi\leq \psi \quad \mbox{ iff } \quad \bbB \models\tau_{\epsilon}(\phi)\leq \tau_{\epsilon}(\psi).
	\]
\end{cor}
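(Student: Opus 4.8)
The plan is to derive the statement as an immediate instance of the general template, Proposition \ref{prop: main prop of Godel tarski algebraically}, once the bookkeeping identifications are in place. First I would fix an arbitrary order-type $\epsilon$ on $X := \mathsf{Prop}$ and set $\mathcal{L}_1 := \mathcal{L}_{\mathrm{DLE}}$ and $\mathcal{L}_2 := \mathcal{L}^\circ_{\mathrm{BAE}}$, so that $\tau_\epsilon$ is a map $\mathcal{L}_1 \to \mathcal{L}_2$ and $r_\epsilon \colon \bbB^X \to \bbA^X$ is the associated map. I would then observe that the hypotheses of the corollary---namely that $e \colon \bbA \hookrightarrow \bbB$ is an order-embedding and a lattice-reduct homomorphism carrying both a right adjoint $\iota$ and a left adjoint $c$, and that the diagrams \eqref{eq: e and c and iota and f and g} commute for every $f \in \mathcal{F}$ and $g \in \mathcal{G}$---are exactly the standing assumptions under which $r_\epsilon$, $\Box^\bbB = e \circ \iota$ and $\Diamond^\bbB = e \circ c$ were defined. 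In particular, $\bbB$ carries the intended $\mathcal{L}^\circ_{\mathrm{BAE}}$-structure, so that both sides of the claimed biconditional are well-formed.

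The core step is simply to appeal to Proposition \ref{prop:  tau epsilon for DLE satisfies a and b}, which asserts precisely that, for this $\epsilon$, the pair $(\tau_\epsilon, r_\epsilon)$ verifies conditions (a) and (b) of Proposition \ref{prop: main prop of Godel tarski algebraically}: that is, $e(\val{\phi}_V) = \valb{\tau_\epsilon(\phi)}_{\overline{e}(V)}$ for every $V \in \bbA^X$, and $\valb{\tau_\epsilon(\phi)}_U = e(\val{\phi}_{r_\epsilon(U)})$ for every $U \in \bbB^X$. Having these two identities in hand, I would invoke Proposition \ref{prop: main prop of Godel tarski algebraically} verbatim, with $\tau := \tau_\epsilon$ and $r := r_\epsilon$, to conclude that $\bbA \models \phi \leq \psi$ iff $\bbB \models \tau_\epsilon(\phi) \leq \tau_\epsilon(\psi)$ for all $\mathcal{L}_{\mathrm{DLE}}$-formulas $\phi, \psi$. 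Since $\epsilon$ was arbitrary, the statement then holds for every order-type.

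Because the substantive work has already been carried out in the two cited propositions, I expect no genuine obstacle here; the only points requiring care are structural rather than computational. I would double-check that Proposition \ref{prop: main prop of Godel tarski algebraically} uses only the order-embedding property of $e$ (as the remark following its proof explicitly records) and makes no demand on $r$ beyond the validity of (a) and (b), so that the adjoints $\iota$ and $c$ enter solely through the construction of $r_\epsilon$ and the interpretation of the modalities, and not through any extra hypothesis. The remaining subtlety is purely one of signatures: one must keep the polarity bookkeeping $\overline{\cdot}^\eta$ consistent between the definition of $\tau_\epsilon$ and the interpretations forced by the commuting squares \eqref{eq: e and c and iota and f and g}, but this consistency is exactly what Proposition \ref{prop:  tau epsilon for DLE satisfies a and b} already guarantees.
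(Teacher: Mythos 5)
Your proposal is correct and follows exactly the route the paper takes: the corollary is stated in the paper as an immediate consequence of Proposition \ref{prop:  tau epsilon for DLE satisfies a and b} combined with Proposition \ref{prop: main prop of Godel tarski algebraically}, which is precisely your argument. The structural checks you flag (that the general template only uses the order-embedding property of $e$, and that the adjoints enter only via $r_\epsilon$ and the modal interpretations) are accurate and consistent with the paper's remarks.
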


We finish this subsection by showing that every {\em perfect} $\mathcal{L}_{\mathrm{DLE}}$-algebra $\bbA$ (cf.~Definition \ref{def:DLE}) embeds into a {\em perfect} $\mathcal{L}^\circ_{\mathrm{BAE}}$-algebra $\bbB$ in the way described in Corollary \ref{cor:main theorem for tau epsilon DLE}:

\begin{prop}\label{prop:existence of algebras and adjoints:perfectDLEs}
	For every perfect $\mathcal{L}_{\mathrm{DLE}}$-algebra $\bbA$, there exists a perfect  $\mathcal{L}^\circ_{\mathrm{BAE}}$--algebra $\bbB$ such that $\bbA$ embeds into $\bbB$ via some order-embedding $e\colon \bbA\hookrightarrow\bbB$ which is also a homomorphism of the lattice reducts of $\bbA$ and $\bbB$ and has both a left adjoint $c\colon \bbB\to \bbA$ and a right adjoint $\iota\colon \bbB\to \bbA$ satisfying the commutativity of the  diagrams \eqref{eq: e and c and iota and f and g}.
\end{prop}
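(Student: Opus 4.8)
The plan is to exploit the discrete duality between perfect $\mathcal{L}_{\mathrm{DLE}}$-algebras and $\mathcal{L}_{\mathrm{DLE}}$-frames, and to take $\bbB$ to be the full powerset Boolean algebra over the dual poset, equipped with Boolean operators read off from the same accessibility relations. Concretely, I would first invoke the fact recorded in Definition \ref{def:DLE} that a perfect distributive lattice is the lattice of up-sets of some poset: letting $W$ be the set $J^{\infty}(\bbA)$ of completely join-prime elements under a suitable order, the lattice reduct of $\bbA$ is identified with $\mathcal{P}^{\uparrow}(W)$. By discrete duality, $\bbA$ is then the complex algebra of an $\mathcal{L}_{\mathrm{DLE}}$-frame $\mathbb{F} = (W, \mathcal{R}_{\mathcal{F}}, \mathcal{R}_{\mathcal{G}})$ in the sense of Definition \ref{def:DLE frame}, so that each $f^{\bbA}$ and $g^{\bbA}$ is computed from the relations $R_f$ and $R_g$ by the satisfaction clauses, and the compatibility conditions \eqref{eq:compatibility Rf} and \eqref{eq:compatibility Rg} hold.

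Next I would define $\bbB$ to have Boolean reduct $\mathcal{P}(W)$, which is perfect since it is complete and atomic, and interpret the connectives of $\mathcal{L}^{\circ}_{\mathrm{BAE}}$ on it from the same relations. For $f \in \mathcal{F}$ (resp.\ $g \in \mathcal{G}$), let $(f^{\circ})^{\bbB}$ (resp.\ $(g^{\circ})^{\bbB}$) be the complete operator of all-$1$ order type determined by $R_f$ (resp.\ $R_g$) via the usual diamond clause $(f^{\circ})^{\bbB}(\overline X) = \{w : \exists \overline v\,(R_f(w,\overline v) \text{ and } v_i \in X_i \text{ for all } i)\}$ (resp.\ the dual box clause for $g$). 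These are completely join-preserving (resp.\ meet-preserving) in each coordinate, so $\bbB$ is a perfect $\mathcal{L}^{\circ}_{\mathrm{BAE}}$-algebra. I would take $e\colon \mathcal{P}^{\uparrow}(W) \hookrightarrow \mathcal{P}(W)$ to be the inclusion; being a complete lattice embedding, it has a right adjoint $\iota$ (the interior, $\iota(X) = \{w : w{\uparrow} \subseteq X\}$) and a left adjoint $c$ (the closure, $c(X) = X{\uparrow}$), with $c \circ e = \iota \circ e = \mathrm{id}_{\bbA}$, and with $\Box^{\bbB} = e\circ\iota$ and $\Diamond^{\bbB} = e\circ c$ recovering the intended $\Box_{\leq}$ and $\Diamond_{\geq}$.

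The heart of the argument is the commutativity of the diagrams \eqref{eq: e and c and iota and f and g}. For $f \in \mathcal{F}$ and up-sets $\overline A \in \bbA^{n_f}$, the polarity-adjusted tuple $e^{\epsilon_f}(\overline A)$ replaces $A_i$ by its complement $A_i^c$ exactly in the coordinates with $\epsilon_f(i) = \partial$; substituting it into the diamond clause turns each membership condition into $v_i \in^{\epsilon_f(i)} A_i$ (with $\in^{1} = \in$ and $\in^{\partial} = \notin$), so that $(f^{\circ})^{\bbB}(e^{\epsilon_f}(\overline A))$ coincides verbatim with the set defining $f^{\bbA}(\overline A)$ from the satisfaction clause. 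By the compatibility condition \eqref{eq:compatibility Rf} this set is an up-set, hence equals $e(f^{\bbA}(\overline A))$; applying $c$, which fixes up-sets, yields $c((f^{\circ})^{\bbB}(e^{\epsilon_f}(\overline A))) = f^{\bbA}(\overline A)$, as required. The case of $g$ is entirely dual, using the box clause, the condition \eqref{eq:compatibility Rg}, and the interior $\iota$ in place of $c$.

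I expect the main obstacle to be purely a matter of bookkeeping: keeping the polarity conventions (the operators $\overline b^{\eta}$ and $e^{\epsilon_f}$, the exponent $\in^{\epsilon_f(i)}$, and the shifted output order-type $\eta_f$ appearing in the compatibility conditions) aligned, so that the diamond and box clauses defining the operators on $\bbB$ match the DLE satisfaction clauses on the nose and the outputs are genuinely up-sets. Once this alignment is secured, everything else is immediate, since $c$ and $\iota$ restrict to the identity on the image of $e$, which is exactly why the adjoints play no essential role beyond ensuring uniformity with the general (non-perfect) statement.
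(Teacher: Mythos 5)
Your proposal is correct and follows essentially the same route as the paper: identify $\bbA$ via discrete (Birkhoff-type) duality with the up-sets of its dual $\mathcal{L}_{\mathrm{DLE}}$-frame, take $\bbB$ to be the powerset algebra over the same frame with the $f^\circ$, $g^\circ$ read off from the relations $R_f$, $R_g$, let $e$ be the inclusion with $c = (\cdot){\uparrow}$ and $\iota = \neg((\neg\cdot){\downarrow})$ as its adjoints, and derive the commutativity of the diagrams \eqref{eq: e and c and iota and f and g} from the compatibility conditions \eqref{eq:compatibility Rf} and \eqref{eq:compatibility Rg}. Your explicit verification that $(f^{\circ})^{\bbB}(e^{\epsilon_f}(\overline A))$ coincides with $f^{\bbA}(\overline A)$ and is an up-set fixed by $c$ is exactly the bookkeeping the paper leaves implicit in its final paragraph.
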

\begin{proof}
	Via expanded Birkhoff's  duality (cf.~e.g.~\cite{DaPr90, sofronie2000duality})  the  perfect $\mathcal{L}_{\mathrm{DLE}}$-algebra $\bbA$ can be identified  with the algebra of up-sets of its associated prime element $\mathcal{L}_{\mathrm{DLE}}$-frame $\mathbb{X}_\bbA$, which is based on a poset. Let $\bbB$ be the powerset algebra of the universe of $\mathbb{X}_\bbA$. Since any  up-set is in particular a subset,  a natural order embedding $e: \bbA\hookrightarrow\bbB$ exists, which is also a complete lattice homomorphism between $\bbA$ and $\bbB$. This shows the first part of the claim.
	
	As to the second part, because $e$ is a complete homomorphism between complete lattices, it has both a left adjoint $c\colon \bbB\to \bbA$ and a right adjoint  $\iota\colon \bbB\to \bbA$, respectively defined by the assignments $b\mapsto  b{\uparrow}$ and $b\mapsto  \neg((\neg b){\downarrow})$, where $b$ is identified with its corresponding  subset in $\mathbb{X}_\bbA$, $\neg b$ is defined as the relative complement of $b$ in $\mathbb{X}_\bbA$, and  $b{\uparrow}$ and $(\neg b){\downarrow}$ are defined using the order in $\mathbb{X}_\bbA$. 
	
	Finally, notice that any $\mathcal{L}_{\mathrm{DLE}}$-frame $\mathbb{F}$ is also an $\mathcal{L}^\circ_{\mathrm{BAE}}$-frame by interpreting the $f$-type connective $\Diamond_{\geq}$ by means of the binary relation  $\geq$, the $g$-type connective $\Box_{\leq}$ by means of the binary relation $\leq$,  each $f^\circ\in \mathcal{F}^\circ$ by means of $R_f$ and each $g^\circ\in \mathcal{G}^\circ$ by means of $R_g$. Moreover, the additional properties \eqref{eq:compatibility Rf} and \eqref{eq:compatibility Rg} of the relations $R_f$ and $R_g$ guarantee that the  diagrams \eqref{eq: e and c and iota and f and g} commute for every $f\in \mathcal{F}$ and $g\in \mathcal{G}$. 
	%
\end{proof}
\begin{remark}
	\label{rmk: mix sufficient but not necessary}
	The parametric GMT translations defined in this section do not just  generalize  those of \cite{GeNaVe05} w.r.t.~the signature, but also differ from them in terms of their definition, and the assumptions each of which  requires. Specifically,  the parametric translations of $f$-formulas (resp.~$g$-formulas)   add an extra $\Diamond_{\geq}$ (resp.~a $\Box_{\leq}$) on top of the corresponding $f^\circ$ (resp.~$g^\circ$) connective, while in \cite{GeNaVe05}, the extra $\Diamond_{\geq}$ and $\Box_{\leq}$ are not used in the definition of the translation of formulas with a modal operator as main connective. This simpler definition is sound  only w.r.t.~semantic settings, such as that of \cite{GeNaVe05}, in which the relations interpreting the modal connectives satisfy the additional properties \eqref{eq:compatibility Rf} and \eqref{eq:compatibility Rg}. This corresponds algebraically to the operations interpreting the classical modal connectives restricting nicely to the algebra of targets of persistent valuations, and corresponds syntactically to the {\em mix axioms} (e.g.~$\Box_{\leq}\Box^\circ\Box_{\leq} p\leftrightarrow \Box^\circ p$ and $\Diamond_{\geq}\Diamond^\circ\Diamond_{\geq} p\leftrightarrow \Diamond^\circ p$, cf.~\cite[Section 6]{wolter1998CoImplication}) being valid. The particular algebras and maps picked in the proof of Proposition \ref{prop:existence of algebras and adjoints:perfectDLEs}  happen to validate also the mix axioms. However, the mix axioms are not a necessary condition for satisfying Proposition \ref{prop:existence of algebras and adjoints:perfectDLEs}, 
	as the following example shows.
	\begin{center}
		\begin{tikzpicture}
		\filldraw[black] (1,5.5) circle (3 pt) node {$\top$}; 
		\filldraw[black] (1,4.5) circle (3 pt);
		\filldraw[black] (-0.5,3.5) circle (3 pt);
		\filldraw[black] (2.5,3.5) circle (3 pt);
		\filldraw[black] (1,2.5) circle (3 pt);
		\draw[very thick] (1, 5.5) -- (1, 4.5) --
		(-0.5, 3.5) -- (1, 2.5) -- (2.5, 3.5) -- (1, 4.5);
		\draw[very thick, red, <-] (0.9, 2.4)  .. controls (0.3, 1.7) and  (1.7, 1.7)  .. (1.1, 2.4); 
		\draw[very thick, red, ->] (-0.65, 3.5)  .. controls (-1.4, 3.1) and  (-0.5, 2.7)  .. (-0.5, 3.4); 
		\draw[very thick, red, <-] (0.9, 5.6)  .. controls (0.3, 6.3) and  (1.7, 6.3)  .. (1.1, 5.6); 
		\draw[very thick, red, ->] (2.65, 3.5)  .. controls (3.4, 3.1) and  (2.5, 2.7)  .. (2.5, 3.4); 
		\draw[very thick, red, ->] (0.8, 4.5)  .. controls (0.5, 4.7) and  (0.5, 5.3)  .. (0.8, 5.5); 
		\draw (1, 2.2) node {$\bot$};
		\draw (-0.7, 3.3) node {$b$};
		\draw (1, 5.8) node {$\top$};
		\draw (2.7, 3.3) node {$c$};
		\draw (1, 4.2) node {$y$};

		\draw[very thick] (7.5, 2.5) -- (6, 3.5) --
		(6, 4.5) -- (7.5, 5.5) -- (9, 4.5) -- (9, 3.5) -- (7.5, 4.5) -- (6, 3.5);
		\draw[very thick] (7.5, 5.5) -- (7.5, 4.5);
		\draw[very thick] (7.5, 2.5) -- (9, 3.5);
		\draw[very thick] (7.5, 2.5) -- (7.5, 3.5);
		\draw[very thick] (6, 4.5) -- (7.5, 3.5) -- (9, 4.5);
		\filldraw[black] (7.5,2.5) circle (3 pt); 
		\filldraw[black] (6,3.5) circle (3 pt); 
		\filldraw[black] (6,4.5) circle (3 pt); 
		\filldraw[black] (7.5,5.5) circle (3 pt); 
		\filldraw[black] (9,4.5) circle (3 pt); 
		\filldraw[black] (7.5,3.5) circle (3 pt); 
		\filldraw[black] (9,3.5) circle (3 pt); 
		\filldraw[black] (7.5,4.5) circle (3 pt); 
		\draw (7.5, 2.2) node {$\bot$};
		\draw (5.8, 3.3) node {$b$};
		\draw (6, 4.8) node {$a$};
		\draw (7.5, 5.8) node {$\top$};
		\draw (9, 4.8) node {$d$};
		\draw (9.2, 3.3) node {$c$};
		\draw (7.5, 4.2) node {$y$};
		\draw (7.5, 3.8) node {$x$};
		
		\draw[very thick, red, ->] (5.8, 3.5)  .. controls (5.5, 3.7) and  (5.5, 4.3)  .. (5.8, 4.5); 
		\draw[very thick, red, ->] (9.2, 3.5)  .. controls (9.5, 3.7) and  (9.5, 4.3)  .. (9.2, 4.5); 
		\draw[very thick, red, <-] (7.4, 5.6)  .. controls (6.8, 6.3) and  (8.2, 6.3)  .. (7.6, 5.6); 
		\draw[very thick, red, ->] (7.3, 4.5)  .. controls (7, 4.7) and  (7, 5.3)  .. (7.3, 5.5); 
		\draw[very thick, red, ->] (7.3, 2.5)  .. controls (7, 2.7) and  (7, 3.3)  .. (7.3, 3.5); 
		\draw[very thick, red, <-] (7.4, 3.6)  .. controls (6.8, 4.3) and  (8.2, 4.3)  .. (7.6, 3.6); 
		\draw[very thick, red, <-] (5.9, 4.6)  .. controls (5.3, 5.3) and  (6.7, 5.3)  .. (6.1, 4.6); 
		\draw[very thick, red, <-] (8.9, 4.6)  .. controls (8.3, 5.3) and  (9.7, 5.3)  .. (9.1, 4.6); 
		
		\end{tikzpicture}
	\end{center}
	
	The (finite, hence perfect) distributive lattice with $\Box$ on the left embeds as a complete lattice into the (finite, hence perfect) Boolean algebra with $\Box^\circ$ on the right. Hence, the corresponding embedding $e$ has  both a right adjoint $\iota$ and a left adjoint $c$. The composition $e\circ\iota$ gives rise to the S4-operation $\Box_{\leq}: = e\circ\iota$ which by construction  maps $a$ to $b$, $d$ to $c$, $x$ to $\bot$, and every other element to itself.   It it easy to check that $\Box$ and $\Box^\circ$ verify the commutativity of diagram \eqref{eq: e and c and iota and f and g}. However, $\Box_{\leq}\Box^\circ\Box_{\leq} b  = b\neq a  = \Box^\circ b$, which shows that the mix axiom is not valid.
\end{remark}
\noindent Notice that Proposition \ref{prop:existence of algebras and adjoints:perfectDLEs} has a more restricted scope than analogous propositions such as Propositions \ref{prop:existence of algebras and adjoints:biHeyting} or \ref{prop:existence of algebras and adjoints:Heyting}. Indeed, via expanded Priestley duality (cf.~e.g.~\cite{sofronie2000duality}), any  DLE $\bbA$ is isomorphic to the DLE of clopen up-sets of its dual (relational) Priestley space $\mathbb{X}_\bbA$, which is a Stone space in particular, and this yields a natural embedding of $\bbA$ into the BAE of the clopen subsets of $\mathbb{X}_\bbA$. However, this embedding  has in general neither a right nor a left adjoint. In Section \ref{sec:Corresp:Via:Trans}, we will see  that Proposition \ref{prop:existence of algebras and adjoints:perfectDLEs} is enough to obtain the  correspondence theorem for inductive $\mathcal{L}_{\mathrm{DLE}}$-inequalities via translation from the correspondence theorem for inductive $\mathcal{L}_{\mathrm{BAE}}$-inequalities. However, we will see in Section \ref{sec:canonicity} that canonicity cannot be straightforwardly obtained in the same way, precisely due to the restriction on Proposition \ref{prop:existence of algebras and adjoints:perfectDLEs}.
As we show next, this restriction can be removed if we confine ourselves setting to arbitrary normal bHAEs. In this setting, we are going to show a strengthened version of Proposition \ref{prop:existence of algebras and adjoints:perfectDLEs}  which will be key for the transfer of canonicity of Section \ref{ssec:canonicity bHAE}.

\subsubsection{Parametric GMT translations for bHAE-logics}\label{sec:ParamGMTbHAE}

The considerations collected in Section \ref{sssection:parametric DLE} apply to the more restricted setting of bHAEs (cf.~Section \ref{ssec: DLEs}). Let us fix a bHAE-signature $\mathcal{L}_{\mathrm{bHAE}} = \mathcal{L}_{\mathrm{bHAE}}(\mathcal{F}, \mathcal{G})$ and let $\mathcal{L}^\circ_{\mathrm{BAE}}$ denote its corresponding target signature (cf.~Section \ref{sssection:parametric DLE}). Then, Corollary \ref{cor:main theorem for tau epsilon DLE} specializes as follows:

\begin{cor}\label{cor:main theorem for tau epsilon bi}
	Let $\bbA$ be an $\mathcal{L}_{\mathrm{bHAE}}$-algebra. If an embedding $e:\bbA\rightarrow\bbB$ exists into an  $\mathcal{L}^\circ_{\mathrm{BAE}}$-algebra $\bbB$ which is a homomorphism of their lattice reducts and $e$ has both a right adjoint $\iota\colon \bbB\to \bbA$ and a left adjoint $c\colon \bbB\to \bbA$ satisfying \eqref{eq: rightarrow Rightarrow e and iota}, \eqref{eq: pdra minus e and c} and \eqref{eq: e and c and iota and f and g}, then for any $\mathcal{L}_{\mathrm{bHAE}}$-inequality $\phi \leq \psi$,
	
	\[
	\bbA\models \phi\leq \psi \quad \mbox{ iff } \quad \bbB \models\tau_{\epsilon}(\phi)\leq \tau_{\epsilon}(\psi).
	\]
\end{cor}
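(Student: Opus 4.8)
The plan is to obtain this corollary as a direct specialization of Corollary \ref{cor:main theorem for tau epsilon DLE}, the only real work being to recognize the bHAE setting as an instance of the DLE setting and then to match the hypotheses. The first step is to view the $\mathcal{L}_{\mathrm{bHAE}}$-algebra $\bbA$, over the signature $\mathcal{L}_{\mathrm{bHAE}}(\mathcal{F}, \mathcal{G})$, as a normal DLE over the expanded signature $\mathcal{L}_{\mathrm{DLE}}(\mathcal{F}\cup\{\pdra\}, \mathcal{G}\cup\{\rightarrow\})$, obtained by adjoining the co-implication $\pdra$ to $\mathcal{F}$ and the Heyting implication $\rightarrow$ to $\mathcal{G}$, both with order-type $(\partial, 1)$ (cf.\ Example \ref{ex:various DLE-languages}). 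This is legitimate because in any bi-Heyting algebra $\pdra$ preserves finite joins in its second coordinate and reverses finite meets in its first, while $\rightarrow$ dually preserves finite meets in its second coordinate and reverses finite joins in its first, so that each is a normal operation of the prescribed order-type. Under this identification, every $\mathcal{L}_{\mathrm{bHAE}}$-formula is an $\mathcal{L}_{\mathrm{DLE}}$-formula of the expanded signature, the interpretation in $\bbA$ of every connective is unchanged, and the parametric translation $\tau_\epsilon$ of Section \ref{sssection:parametric DLE} restricts on $\rightarrow$- and $\pdra$-formulas to $\tau_\epsilon(\phi\rightarrow\psi) = \Box_\leq(\neg\tau_\epsilon(\phi)\vee\tau_\epsilon(\psi))$ and $\tau_\epsilon(\phi\pdra\psi) = \Diamond_\geq(\neg\tau_\epsilon(\phi)\wedge\tau_\epsilon(\psi))$, upon reading $\rightarrow^\circ$ as $\vee$ and $\pdra^\circ$ as $\wedge$ (both normal operations of all-$1$ order-type on the Boolean target).

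The second step is to check that the hypotheses assumed here are exactly those of Corollary \ref{cor:main theorem for tau epsilon DLE} for this expanded signature. The order-embedding $e$, its being a homomorphism of lattice reducts, and the existence of the two adjoints $\iota$ and $c$ transfer verbatim. For the expanded signature, Corollary \ref{cor:main theorem for tau epsilon DLE} requires the commutativity of the diagrams \eqref{eq: e and c and iota and f and g} for every $f \in \mathcal{F}\cup\{\pdra\}$ and every $g \in \mathcal{G}\cup\{\rightarrow\}$. By the footnote to \eqref{eq: e and c and iota and f and g}, the instances of these diagrams for $f = \pdra$ and $g = \rightarrow$ are precisely conditions \eqref{eq: pdra minus e and c} and \eqref{eq: rightarrow Rightarrow e and iota}, whereas for the remaining $f \in \mathcal{F}$ and $g \in \mathcal{G}$ they are \eqref{eq: e and c and iota and f and g} itself. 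Since all three families of equations are assumed in the statement, the diagrams commute for every operation of the expanded DLE-signature.

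With all hypotheses matched, the desired equivalence $\bbA \models \phi \leq \psi$ iff $\bbB \models \tau_\epsilon(\phi) \leq \tau_\epsilon(\psi)$ follows immediately by invoking Corollary \ref{cor:main theorem for tau epsilon DLE} (which itself rests on Propositions \ref{prop: main prop of Godel tarski algebraically} and \ref{prop:  tau epsilon for DLE satisfies a and b}). I expect no genuine obstacle in this argument: the entire content is bookkeeping, and the one point deserving care is the observation that the base bi-Heyting connectives $\rightarrow$ and $\pdra$ must be treated as DLE-operations, together with the recognition that conditions \eqref{eq: rightarrow Rightarrow e and iota} and \eqref{eq: pdra minus e and c} are nothing other than the commutativity requirements \eqref{eq: e and c and iota and f and g} for these two connectives, so that no fresh induction on formulas is required.
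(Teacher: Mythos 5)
Your proposal is correct and follows exactly the route the paper takes: the paper presents this corollary as a direct specialization of Corollary \ref{cor:main theorem for tau epsilon DLE}, with the footnote to \eqref{eq: e and c and iota and f and g} already recording that \eqref{eq: rightarrow Rightarrow e and iota} and \eqref{eq: pdra minus e and c} are the instances of those diagrams for $\rightarrow$ and $\pdra$. Your write-up simply makes explicit the bookkeeping the paper leaves implicit.
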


As discussed above, the present setting is characterized  by the fact that, for any $\mathcal{L}_{\mathrm{bHAE}}$-algebras $\bbA$, the right and left adjoints of the embedding map $e: \bbA \hookrightarrow\bbB$ exist, as shown by the following proposition.
\begin{prop}\label{prop:existence of algebras and adjoints:bHAE}
	For every  $\mathcal{L}_{\mathrm{bHAE}}$-algebra $\bbA$, there exists an   $\mathcal{L}^\circ_{\mathrm{BAE}}$-algebra $\bbB$ such that $\bbA$ embeds into $\bbB$ via some order-embedding $e\colon \bbA\hookrightarrow\bbB$ which is also a homomorphism of the lattice reducts of $\bbA$ and $\bbB$ and has both a left adjoint $c\colon \bbB\to \bbA$ and a right adjoint $\iota\colon \bbB\to \bbA$ satisfying \eqref{eq: rightarrow Rightarrow e and iota}, \eqref{eq: pdra minus e and c} and \eqref{eq: e and c and iota and f and g}.
	Finally, all these facts lift to the canonical extensions of $\bbA$ and $\bbB$ as in the following diagram:
	\begin{center}
		\begin{tikzpicture}[node/.style={circle, draw, fill=black}, scale=1]
		
		\node (A) at (-1.5,-1.5) {$\mathbb{A}$};
		\node (A delta) at (-1.5,1.5) {$\mathbb{A}^{\delta}$};
		\node (B) at (1.5,-1.5) {$\mathbb{B}$};
		\node (B delta) at (1.5,1.5) {$\mathbb{B}^{\delta}$};
		
		\draw [right hook->] (A) to  (A delta);
		\draw [right hook->] (B)  to (B delta);
		\draw [right hook->] (A)  to node[above] {$e$}  (B);
		\draw [right hook->] (A delta)  to node[above] {$e^{\delta}$}  (B delta);
		
		\draw [->] (B delta) to [out=135,in=45, looseness=1]  node[below] {\rotatebox[origin=c]{270}{$\vdash$}} node[above] {$\iota^{\pi}$}  (A delta);
		\draw [->] (B) to [out=135,in=45, looseness=1]  node[below] {\rotatebox[origin=c]{270}{$\vdash$}} node[above] {$\iota$}  (A);
		\draw [->] (B delta) to [out=225,in=-45, looseness=1]  node[above] {\rotatebox[origin=c]{270}{$\vdash$}} node[below] {$c^{\sigma}$}  (A delta);
		\draw [->] (B) to [out=225,in=-45, looseness=1]  node[above] {\rotatebox[origin=c]{270}{$\vdash$}} node[below] {$c$}  (A);
		\end{tikzpicture}
	\end{center}
\end{prop}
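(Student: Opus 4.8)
The plan is to combine the duality-theoretic strategy of Proposition \ref{prop:existence of algebras and adjoints:biHeyting} (which handles the bi-Heyting reduct and yields the adjoints $c,\iota$) with the relational treatment of the additional operators used in Proposition \ref{prop:existence of algebras and adjoints:perfectDLEs} (which handles the diagrams \eqref{eq: e and c and iota and f and g}), but now carried out in a topological rather than perfect setting, so as to cover arbitrary $\mathcal{L}_{\mathrm{bHAE}}$-algebras. First I would invoke expanded Priestley duality for DLEs (cf.\ \cite{sofronie2000duality}) to identify $\bbA$ with the algebra of clopen up-sets of its dual space $\mathbb{X}_\bbA$, equipped with $(n_f+1)$-ary relations $R_f$ and $(n_g+1)$-ary relations $R_g$ satisfying the compatibility conditions \eqref{eq:compatibility Rf} and \eqref{eq:compatibility Rg}, and with respect to which $f^\bbA$ and $g^\bbA$ are computed by the relational clauses of Definition \ref{def:DLE frame}. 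The crucial extra ingredient over the general DLE case is that, since the lattice reduct of $\bbA$ is a \emph{bi-Heyting} algebra, $\mathbb{X}_\bbA$ is a bi-Esakia space, i.e.\ a Priestley space in which both the upward- and the downward-closure of every clopen set is clopen (cf.\ \cite{wolter1998CoImplication}).

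Next I would let $\bbB$ be the $\mathcal{L}^\circ_{\mathrm{BAE}}$-algebra whose Boolean reduct is the algebra of clopen \emph{subsets} of $\mathbb{X}_\bbA$, interpreting $\Diamond_\geq$ and $\Box_\leq$ by means of $\geq$ and $\leq$, and each $f^\circ$ (resp.\ $g^\circ$) as the diamond-type (resp.\ box-type) operator induced by $R_f$ (resp.\ $R_g$); the bi-Esakia and continuity properties guarantee that these operations map clopens to clopens, so that $\bbB$ is a well-defined normal BAE of the required target signature. Since every clopen up-set is in particular a clopen subset, the inclusion $e\colon \bbA\hookrightarrow\bbB$ is an order-embedding and a lattice homomorphism. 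I would then define $c,\iota\colon\bbB\to\bbA$ by $b\mapsto b{\uparrow}$ and $b\mapsto \neg((\neg b){\downarrow})$, which are well-defined into the clopen up-sets precisely by the bi-Esakia property; verifying that $c$ and $\iota$ are respectively the left and right adjoints of $e$ and that conditions \eqref{eq: rightarrow Rightarrow e and iota} and \eqref{eq: pdra minus e and c} hold is then identical to the corresponding verification in Proposition \ref{prop:existence of algebras and adjoints:biHeyting}.

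The main obstacle, and the genuinely new step, is the verification of the commutativity of the diagrams \eqref{eq: e and c and iota and f and g} for the operators $f\in\mathcal{F}$ and $g\in\mathcal{G}$. Here I would argue that, writing $\overline{Y} := e^{\epsilon_f}(\overline{X}) = \overline{e(X)}^{\epsilon_f}$, the polarity bookkeeping exactly converts the condition $v_i\in Y_i$ into $v_i\Vdash^{\epsilon_f(i)}X_i$, so that ${f^\circ}^\bbB(e^{\epsilon_f}(\overline{X}))$ and $f^\bbA(\overline{X})$ are computed by literally the same relational clause. The key point is then that the compatibility condition \eqref{eq:compatibility Rf} forces this set to be an \emph{up-set} (take the witness tuple $\overline{v}$ unchanged and use monotonicity in the first coordinate), and $c = (-){\uparrow}$ acts as the identity on up-sets, yielding $c\circ {f^\circ}^\bbB\circ e^{\epsilon_f} = f^\bbA$. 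The argument for $g$ is dual: \eqref{eq:compatibility Rg} forces ${g^\circ}^\bbB(e^{\epsilon_g}(\overline{X}))$ to be an up-set, and $\iota$ restricts to the identity on up-sets (since for an up-set $Y$ one has $\neg((\neg Y){\downarrow}) = Y$), giving $\iota\circ {g^\circ}^\bbB\circ e^{\epsilon_g} = g^\bbA$.

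Finally, to lift everything to the canonical extensions I would reason exactly as in the last paragraph of Proposition \ref{prop:existence of algebras and adjoints:biHeyting}: being a lattice homomorphism, $e$ is smooth, so $e^\delta\colon\bbad\to\bbbd$ is an order-embedding which is moreover a complete homomorphism of the lattice reducts (cf.\ \cite[Corollary 4.8]{GeHa01}), hence endowed with both a left and a right adjoint, which by \cite[Proposition 4.2]{GePr07} coincide with $c^\sigma$ and $\iota^\pi$ respectively. This furnishes the commuting diagram in the statement and equips $\bbbd$ with the structure of an S4 bi-modal algebra via $\Box_\leq^{\bbbd} := e^\delta\circ\iota^\pi$ and $\Diamond_\geq^{\bbbd} := e^\delta\circ c^\sigma$.
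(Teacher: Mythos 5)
Your construction of $\bbB$, of the adjoints $c$ and $\iota$, and your verification of the commutativity of the diagrams \eqref{eq: e and c and iota and f and g} at the level of $\bbA$ and $\bbB$ is sound and in fact fills in details that the paper leaves implicit (the paper simply appeals to a bi-Esakia-restricted version of the duality in \cite{sofronie2000duality}); your observation that the compatibility conditions \eqref{eq:compatibility Rf} and \eqref{eq:compatibility Rg} force ${f^\circ}^{\bbB}(e^{\epsilon_f}(\overline{X}))$ and ${g^\circ}^{\bbB}(e^{\epsilon_g}(\overline{X}))$ to be (clopen) up-sets, on which $c$ and $\iota$ act as the identity, is exactly the right mechanism.

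However, there is a genuine gap in your treatment of the last part of the statement. ``All these facts lift to the canonical extensions'' must include the commutativity of the diagrams \eqref{eq: e and c and iota and f and g} (and hence of \eqref{eq: rightarrow Rightarrow e and iota} and \eqref{eq: pdra minus e and c}, which are special cases) at the level of $\bbA^\delta$ and $\bbB^\delta$, with $f$, $g$, $f^\circ$, $g^\circ$ replaced by their $\sigma$- and $\pi$-extensions and $c$, $\iota$ by $c^\sigma$, $\iota^\pi$. This is precisely what Theorem \ref{theor:canon via transl bi HAE} uses when it invokes ``the last part of the statement'' to apply Corollary \ref{cor:main theorem for tau epsilon bi} to $\bbA^\delta$ and $\bbB^\delta$. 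Your final paragraph only lifts the adjunction square (via smoothness of $e$, \cite[Corollary 4.8]{GeHa01} and \cite[Proposition 4.2]{GePr07}) and then asserts that the diagram is furnished; but the identity $f^{\bbA} = c\circ {f^\circ}^{\bbB}\circ e^{\epsilon_f}$ does not automatically survive canonical extension, since the canonical extension of a composite of maps is not in general the composite of the canonical extensions. The paper closes this gap by observing that each such identity can be rewritten as a pair of inequalities ($f \leq c f^\circ e^{\epsilon_f}$ and $c f^\circ e^{\epsilon_f} \leq f$) which are Sahlqvist in the combined signature and therefore canonical, so their validity transfers from $(\bbA,\bbB)$ to $(\bbA^\delta,\bbB^\delta)$. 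You need to supply this (or an equivalent) argument; without it the right-hand equivalence of the U-shaped diagram in the canonicity transfer does not go through.
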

\begin{proof}
	By Proposition \ref{prop:existence of algebras and adjoints:biHeyting}, to complete the proof of the first part of the statement, we  need to address the claims regarding the expansions. This is done by using a   version of the  duality in \cite{sofronie2000duality} restricted to those Priestley spaces which are also bi-Esakia spaces. As to the second part, notice that the commutativity of the diagrams \eqref{eq: e and c and iota and f and g} can be written in the form of pairs of  inequalities (i.e.~$f = cf^\circ e^{\epsilon_f}$ and $g = \iota g^\circ e^{\epsilon_g}$) which are Sahlqvist and hence lift to the upper part of the diagram above.
\end{proof}

\section{Correspondence via translation}\label{sec:Corresp:Via:Trans}
The theory developed so far puts us in a position to meaningfully formulate and prove the transfer of first-order correspondence to Sahlqvist and inductive DLE-inequalities from suitable classical poly-modal cases. These general results specialize to the logics mentioned above, e.g. those mentioned in Example \ref{ex:various DLE-languages}.

In what follows, we let $\mathcal{L}$ denote an arbitrary but fixed DLE-language and $\mathcal{L}^\circ$ its associated  target language (cf.~Section \ref{sssection:parametric DLE}).  
The general definition of inductive inequalities (cf.\ Definition \ref{Inducive:Ineq:Def}) applies both $\mathcal{L}$ and $\mathcal{L}^\circ$. In particular, the Boolean negation in $\mathcal{L}^\circ$ enjoys both the order-theoretic properties of a unary $f$-type connective and of a unary $g$-type connective. Hence, Boolean negation occurs unrestricted in inductive $\mathcal{L}^\circ$-inequalities. Moreover, the algebraic interpretations of the  S4-connectives $\Box_\leq$ and $\Diamond_\geq$ enjoy the order-theoretic properties of normal unary $f$-type and $g$-type connectives respectively. Hence,  the occurrence of $\Box_\leq$ and $\Diamond_\geq$ in inductive $\mathcal{L}^\circ$-inequalities is subject to the same restrictions applied to any connective pertaining to the same class to which they belong.

The following correspondence theorem is a straightforward extension to the $\mathcal{L}^\circ$-setting of the correspondence result for classical normal modal logic in \cite{CoGoVa06}:
\begin{prop}\label{prop:correspondence theorem in lstar}
	Every inductive $\mathcal{L}^\circ$-inequality has a first-order correspondent over its class of $\mathcal{L}^\circ$-frames.
\end{prop}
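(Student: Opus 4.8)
The plan is to obtain the first-order correspondent by running the algorithm ALBA on the given inductive $\mathcal{L}^\circ$-inequality, exactly as in the classical poly-modal case treated in \cite{CoGoVa06}. The point is that $\mathcal{L}^\circ = \mathcal{L}^\circ_{\mathrm{BAE}}$ is a normal modal signature over a Boolean base: each $f^\circ \in \mathcal{F}^\circ$ (resp.\ $g^\circ \in \mathcal{G}^\circ$) is a normal, finitely join-preserving (resp.\ meet-preserving) operator of order-type constantly $1$, while $\Diamond_\geq$ (resp.\ $\Box_\leq$) is a normal unary $f$-type (resp.\ $g$-type) connective. Every $\mathcal{L}^\circ$-frame $\mathbb{F}$ gives rise to its complex algebra, which is a \emph{perfect} BAE whose lattice reduct is the full powerset $\mathcal{P}(W)$; since this Boolean algebra is atomic and complete, every valuation is admissible. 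Hence frame-validity of $\phi \leq \psi$ coincides with its validity in the complex algebra under arbitrary assignments, and we may reason entirely in the perfect-algebra semantics on which ALBA is sound.

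First I would run the \emph{first-approximation} phase, replacing the validity of $\phi \leq \psi$ by the equivalent condition that $\nomi \leq \phi$ and $\psi \leq \cnomm$ jointly imply $\nomi \leq \cnomm$, quantifying over all nominals $\nomi$ (completely join-irreducible elements, i.e.\ the atoms of $\mathcal{P}(W)$) and co-nominals $\cnomm$. The \emph{reduction} phase then proceeds along the structure provided by Definition \ref{Inducive:Ineq:Def}: the Skeleton part of each signed generation tree is dismantled by approximation and splitting rules, while the PIA part is handled by adjunction rules at the SRA nodes and by residuation rules at the SRR nodes---the latter being applicable precisely because condition 2 of the inductive definition guarantees that all but one argument of each SRR node contain only variables strictly below the displayed one in the dependency order $<_\Omega$ and carry the appropriate polarity. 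The irreflexive, transitive order $<_\Omega$ ensures that the Ackermann lemma can be applied successively, eliminating every propositional variable and leaving a \emph{pure} quasi-inequality containing only nominals, co-nominals, and the connectives of $\mathcal{L}^\circ$.

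The final step translates this pure quasi-inequality into the first-order frame-correspondence language by the standard translation: each connective $h \in \mathcal{F}^\circ \cup \mathcal{G}^\circ$ is unravelled through its interpreting relation $R_h$, the S4-connectives $\Diamond_\geq, \Box_\leq$ through the relations $\geq, \leq$, and Boolean negation, meet and join through the Boolean structure of $\mathcal{P}(W)$; the nominals and co-nominals become universally quantified first-order variables. Soundness of every ALBA rule over perfect BAEs (already available from the classical and DLE treatments) then yields that the resulting first-order sentence is equivalent to $\phi \leq \psi$ on every $\mathcal{L}^\circ$-frame, which is the desired correspondent.

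The step I expect to require the most care---and the reason the result is an \emph{extension} rather than a verbatim citation of \cite{CoGoVa06}---is checking that ALBA's success on inductive inequalities survives two features of $\mathcal{L}^\circ$ absent from the basic modal case: the presence of the extra, generic normal operators $f^\circ, g^\circ$ of arbitrary arity, and the \emph{unrestricted} occurrence of Boolean negation. The former is routine, since the adjunction and residuation rules and the standard translation extend uniformly to normal operators of any arity and order-type. The latter needs the observation, already recorded before the statement, that negation enjoys simultaneously the order-theoretic profile of a unary $f$-type and of a unary $g$-type connective; consequently the Skeleton/PIA classification of Table \ref{Join:and:Meet:Friendly:Table} accommodates it and the associated sign-switching ALBA rules are sound and do not obstruct the elimination of critical variables. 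Once these two points are in place, the argument is exactly that of the classical inductive correspondence theorem.
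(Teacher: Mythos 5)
Your proposal is correct and follows essentially the same route as the paper, which gives no explicit proof but simply asserts that the proposition is a ``straightforward extension'' of the algorithmic correspondence result of \cite{CoGoVa06} to the $\mathcal{L}^\circ$-signature. Your sketch --- first approximation, reduction guided by the Skeleton/PIA structure and the dependency order, Ackermann elimination, and standard translation of the resulting pure quasi-inequality, together with the observations about the arbitrary-arity normal operators and the dual $f$/$g$-profile of Boolean negation --- is a faithful elaboration of exactly the argument the paper is implicitly invoking.
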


In what follows, we aim to transfer the correspondence theorem for inductive $\mathcal{L}^{\circ}$-inequalities as stated in the proposition  above to inductive $\mathcal{L}$-inequalities. The next proposition is the first step towards this goal. As before, let $X := \mathsf{Prop}$.
\begin{prop}\label{prop:preservation of inductive structure}
	The following are equivalent for any order-type $\epsilon$ on $X$, and any  $\mathcal{L}$-inequality $\phi\leq \psi$:
	\begin{enumerate}
		\item $\phi\leq \psi$ is an $(\Omega, \epsilon)$-inductive $\mathcal{L}$-inequality;
		\item  $\tau_\epsilon(\phi)\leq \tau_\epsilon(\psi)$ is an $(\Omega, \epsilon)$-inductive $\mathcal{L}^\circ$-inequality.
	\end{enumerate}
\end{prop}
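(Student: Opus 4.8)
The plan is to prove the equivalence by analysing how the parametric translation $\tau_\epsilon$ acts on signed generation trees, showing that $\ast\tau_\epsilon(\phi)$ is the tree $\ast\phi$ with extra unary nodes spliced in, in a way that preserves all the data relevant to Definition \ref{Inducive:Ineq:Def}. First I would establish, by induction on $\phi$, a \emph{sign-preservation lemma}: for every subterm occurrence $\chi$ of $\phi$, the root of $\tau_\epsilon(\chi)$ carries the same sign in $\ast\tau_\epsilon(\phi)$ as $\chi$ carries in $\ast\phi$, and the whole signed tree of $\tau_\epsilon(\chi)$ is determined recursively by $\chi$ and that sign. The only nonroutine case is an $f$- or $g$-node: since $f^\circ,g^\circ$ have constant order-type $1$ while $f,g$ have order-types $\epsilon_f,\epsilon_g$, the sign of the $i$-th child would fail to match — but the polarity bookkeeping $\overline{(\cdot)}^{\epsilon_f}$ inserts a $\neg$ exactly on the coordinates $i$ with $\epsilon_f(i)=\partial$, and this $\neg$ flips the sign back. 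Thus the spliced negations precisely compensate for the order-type collapse, and signs are preserved. In particular the $\epsilon$-critical leaves of $\ast\phi$ are in sign-preserving bijection with those of $\ast\tau_\epsilon(\phi)$: a leaf $p$ is wrapped as $\Box_\leq p$ or $\Diamond_\geq p$ according to whether $\epsilon(p)=1$ or $\epsilon(p)=\partial$, and the inner leaf retains its sign and its $\epsilon$-value, hence its (non)criticality.

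Next I would carry out the \emph{classification bookkeeping} against Table \ref{Join:and:Meet:Friendly:Table}. Reading it off: $+\Diamond_\geq,\ +f^\circ,\ -\Box_\leq,\ -g^\circ$ are Skeleton (SLR), whereas $-\Diamond_\geq,\ -f^\circ,\ +\Box_\leq,\ +g^\circ$ are PIA — each spliced modal being SRA, and each $f^\circ$/$g^\circ$ being SRA or SRR according as its arity is $1$ or $\geq 2$. Crucially, every spliced unary node has the \emph{same} sign as the $\mathcal{L}$-node it decorates, so a Skeleton $f/g$-node translates to a length-two Skeleton path and a PIA $f/g$-node to a length-two PIA path with a leading SRA modal; and each $\epsilon$-critical leaf gains exactly one SRA node on top of it. The remaining new nodes are the Boolean negations on $\partial$-edges; since in $\mathcal{L}^\circ$ the negation enjoys the order-theoretic behaviour of both a unary $f$- and a unary $g$-type connective (as noted before Proposition \ref{prop:correspondence theorem in lstar}), each $\neg$ may be classified as Skeleton (SLR) or PIA (SRA) at will, and is never SRR. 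Hence every Skeleton node of $\ast\phi$ maps into the Skeleton part and every PIA node into the PIA part; classifying each spliced negation according to the part of the branch in which it lies — and, when one happens to sit at the unique PIA-to-Skeleton transition of a good branch, using its flexibility to absorb that transition — shows that a critical branch is good (resp.\ excellent) in $\ast\phi$ iff its image is good (resp.\ excellent) in $\ast\tau_\epsilon(\phi)$. This settles clause 1 of Definition \ref{Inducive:Ineq:Def} in both directions.

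For clause 2, I would note that the SRR-nodes on a critical branch of $\ast\tau_\epsilon(\phi)$ are exactly the images of those on the corresponding branch of $\ast\phi$: the spliced modals and negations are unary, hence never SRR, while $f^\circ$ (resp.\ $g^\circ$) is SRR iff it has arity $\geq 2$ iff $f$ (resp.\ $g$) is. For such a node $\circledast(\gamma_1,\dots,\beta,\dots,\gamma_m)$, each non-critical child $\gamma_h$ is sent to $\tau_\epsilon(\gamma_h)$ (possibly under a compensating $\neg$), and sign-preservation guarantees that the leaves of this image carry the same signs and $\epsilon$-values as those of $\gamma_h$; therefore $\gamma_h$ agrees with $\epsilon^\partial$ iff its image does, giving clause 2(a). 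Clause 2(b) is immediate, since $\tau_\epsilon$ neither creates nor deletes propositional variables, so $\tau_\epsilon(\gamma_h)$ and $\gamma_h$ contain exactly the same variables, while the critical variable $p_i$ and the dependency order $<_\Omega$ are left untouched.

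The two implications then follow by reading these correspondences in the two directions. The main obstacle is combinatorial rather than computational: it is verifying that the splicing of modals and, above all, of the polarity-bookkeeping negations on $\partial$-edges never disrupts the PIA-then-Skeleton shape of a good branch, nor introduces a spurious SRR-node on a critical branch. This is precisely where the double ($f$- and $g$-type) nature of Boolean negation is essential: were $\neg$ confined to a single class, a negation spliced at the PIA/Skeleton transition, or one occurring inside an excellent (all-SRA) segment, could destroy goodness or excellence; its flexibility is what makes the inductive shape transfer intact — with the \emph{same} $\Omega$ and $\epsilon$ — in both directions.
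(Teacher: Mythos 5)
Your proof is correct and follows essentially the same route as the paper's own (much terser) argument, which just records the three key facts you establish in detail: that critical leaves acquire an SRA/PIA node immediately above them, that Skeleton nodes translate to Skeleton paths and PIA nodes to PIA paths, and that the dependency order $\Omega$ is undisturbed. Your sign-preservation lemma and the case analysis of the spliced polarity-bookkeeping negations (including their dual $f$/$g$ classifiability at the PIA-to-Skeleton transition) are exactly the details the paper leaves implicit, so there is nothing to add.
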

\begin{proof}
	By induction on the shape of $\phi\leq \psi$. In a nutshell: the definitions involved guarantee that: (1) PIA nodes are introduced immediately above $\epsilon$-critical occurrences of proposition variables;  (2) Skeleton nodes are translated as (one or more) Skeleton nodes; (3) PIA nodes are translated as (one or more) PIA nodes. Moreover, this translation does not disturb the dependency order $\Omega$. Hence, from item 1 to item 2, the translation does not introduce any violation on $\epsilon$-critical branches, and, from item 2 to item 1, the translation does not amend any violation.
\end{proof}

\begin{theorem}[Correspondence via translation]\label{theor:corr via transl}
	The correspondence theorem for inductive $\mathcal{L}^{\circ}$-inequalities transfers to inductive $\mathcal{L}$-inequalities.
\end{theorem}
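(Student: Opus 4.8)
The plan is to chain together the structural preservation result (Proposition \ref{prop:preservation of inductive structure}), the algebraic full-and-faithfulness of $\tau_\epsilon$ on perfect algebras (Corollary \ref{cor:main theorem for tau epsilon DLE} via Proposition \ref{prop:existence of algebras and adjoints:perfectDLEs}), and the classical correspondence theorem in the target signature (Proposition \ref{prop:correspondence theorem in lstar}), and then to pull the resulting first-order correspondent back from the $\mathcal{L}^\circ$-frame vocabulary to the $\mathcal{L}$-frame vocabulary. First I would fix an inductive $\mathcal{L}$-inequality $\phi\leq\psi$, which by definition is $(\Omega,\epsilon)$-inductive for some dependency order $\Omega$ and order-type $\epsilon$. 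By Proposition \ref{prop:preservation of inductive structure}, its translation $\tau_\epsilon(\phi)\leq\tau_\epsilon(\psi)$ is an $(\Omega,\epsilon)$-inductive $\mathcal{L}^\circ$-inequality, and hence, by Proposition \ref{prop:correspondence theorem in lstar}, it has a first-order correspondent $\alpha$ over the class of $\mathcal{L}^\circ$-frames; that is, $\mathbb{G}\Vdash\tau_\epsilon(\phi)\leq\tau_\epsilon(\psi)$ iff $\mathbb{G}\models\alpha$ for every $\mathcal{L}^\circ$-frame $\mathbb{G}$.

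Next I would connect the two frame semantics at the level of complex algebras. Let $\mathbb{F}=(\mathbb{X},\mathcal{R}_\mathcal{F},\mathcal{R}_\mathcal{G})$ be an arbitrary $\mathcal{L}$-frame and let $\bbA$ be its complex algebra, i.e.\ the perfect $\mathcal{L}_{\mathrm{DLE}}$-algebra of up-sets of $\mathbb{X}$, so that $\mathbb{F}\Vdash\phi\leq\psi$ iff $\bbA\models\phi\leq\psi$. Following the construction in the proof of Proposition \ref{prop:existence of algebras and adjoints:perfectDLEs}, I would reinterpret $\mathbb{F}$ as an $\mathcal{L}^\circ$-frame $\mathbb{F}^\circ$ on the same domain $W$, by interpreting $\Diamond_\geq$ via $\geq$, $\Box_\leq$ via $\leq$, each $f^\circ$ via $R_f$, and each $g^\circ$ via $R_g$. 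Its complex algebra is precisely the perfect $\mathcal{L}^\circ_{\mathrm{BAE}}$-algebra $\bbB=\mathcal{P}(W)$ furnished by that proposition, into which $\bbA$ embeds through an $e$ whose adjoints $c,\iota$ make the diagrams \eqref{eq: e and c and iota and f and g} commute. Corollary \ref{cor:main theorem for tau epsilon DLE} then gives $\bbA\models\phi\leq\psi$ iff $\bbB\models\tau_\epsilon(\phi)\leq\tau_\epsilon(\psi)$, and re-expressing both sides via the frame/complex-algebra correspondence yields
\[
\mathbb{F}\Vdash\phi\leq\psi \quad\mbox{ iff }\quad \mathbb{F}^\circ\Vdash\tau_\epsilon(\phi)\leq\tau_\epsilon(\psi).
\]

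The last step is the back-translation of the correspondent. Applying the $\mathcal{L}^\circ$-correspondence of $\alpha$ to the particular frame $\mathbb{G}=\mathbb{F}^\circ$, I obtain $\mathbb{F}^\circ\Vdash\tau_\epsilon(\phi)\leq\tau_\epsilon(\psi)$ iff $\mathbb{F}^\circ\models\alpha$. Now $\alpha$ is a first-order sentence in the relation symbols $R_{\Box_\leq},R_{\Diamond_\geq},R_{f^\circ},R_{g^\circ}$; I would define $\alpha'$ by the substitutions $R_{\Box_\leq}(x,y)\mapsto x\leq y$, $R_{\Diamond_\geq}(x,y)\mapsto y\leq x$, $R_{f^\circ}\mapsto R_f$, $R_{g^\circ}\mapsto R_g$, obtaining a first-order sentence in the signature of $\mathcal{L}$-frames (which contains exactly $\leq$ together with the $R_f$ and $R_g$). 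Since $\mathbb{F}^\circ$ interprets these $\mathcal{L}^\circ$-relations precisely as the substitutions prescribe, $\mathbb{F}^\circ\models\alpha$ iff $\mathbb{F}\models\alpha'$. Chaining the three equivalences gives $\mathbb{F}\Vdash\phi\leq\psi$ iff $\mathbb{F}\models\alpha'$, and since $\mathbb{F}$ was arbitrary, $\alpha'$ is a first-order correspondent of $\phi\leq\psi$ over $\mathcal{L}$-frames.

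I expect the main obstacle to be the bookkeeping that makes the back-substitution legitimate: one must verify carefully that the complex algebra of $\mathbb{F}^\circ$ is literally the algebra $\bbB$ of Proposition \ref{prop:existence of algebras and adjoints:perfectDLEs} (so that the algebraic equivalence transfers to the frame level), and that every relation symbol appearing in $\alpha$ corresponds, on $\mathbb{F}^\circ$, either to the partial order $\leq$ (and its converse, which is first-order definable), or to one of the $\mathcal{L}$-relations $R_f,R_g$. Once this identification of vocabularies is pinned down, no genuinely new correspondence computation is needed: the heavy lifting is done entirely by Propositions \ref{prop:preservation of inductive structure}, \ref{prop:existence of algebras and adjoints:perfectDLEs} and \ref{prop:correspondence theorem in lstar}, and the theorem follows by composing their conclusions.
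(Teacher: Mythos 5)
Your proposal is correct and follows essentially the same route as the paper's proof: translate, invoke Proposition \ref{prop:preservation of inductive structure} and Proposition \ref{prop:correspondence theorem in lstar}, use Proposition \ref{prop:existence of algebras and adjoints:perfectDLEs} together with Corollary \ref{cor:main theorem for tau epsilon DLE} and discrete duality to move between $\mathbb{F}\Vdash\phi\leq\psi$ and $\mathbb{F}\Vdash^*\tau_\epsilon(\phi)\leq\tau_\epsilon(\psi)$, and conclude that the first-order correspondent carries over because the frame correspondence languages coincide. The only difference is that you make explicit the vocabulary bookkeeping for the back-substitution of relation symbols, which the paper compresses into the remark that the two first-order frame languages are the same.
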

\begin{proof}
	Let $\phi\leq \psi$ be an $(\Omega, \epsilon)$-inductive $\mathcal{L}$-inequality, and $\mathbb{F}$ be an $\mathcal{L}$-frame such that $\mathbb{F}\Vdash \phi\leq \psi$. By the discrete duality between perfect $\mathcal{L}$-algebras and $\mathcal{L}$-frames, this assumption is equivalent to $\bbA\models \phi\leq \psi$, where $\bbA$ denotes the complex $\mathcal{L}$-algebra of $\mathbb{F}$. Since $\bbA$ is a perfect $\mathcal{L}$-algebra, by Proposition  
	\ref{prop:existence of algebras and adjoints:perfectDLEs}, a perfect $\mathcal{L}^\circ$-algebra $\bbB$ exists with a natural embedding $e:\bbA\rightarrow\bbB$  which is a homomorphism of the lattice reducts of $\bbA$ and $\bbB$ and  has both a right adjoint $\iota\colon \bbB\to \bbA$ and a left adjoint $c\colon \bbB\to \bbA$ such that diagrams \eqref{eq: e and c and iota and f and g} commute. Hence, Corollary 
	\ref{cor:main theorem for tau epsilon DLE} is applicable and yields $\bbA\models \phi\leq \psi$ iff $\bbB\models \tau_\epsilon(\phi)\leq \tau_\epsilon(\psi)$, which is equivalent to $\mathbb{F}\Vdash^*\tau_\epsilon(\phi)\leq \tau_\epsilon(\psi)$  by the discrete duality between perfect $\mathcal{L}^\circ$-algebras and $\mathcal{L}^\circ$-frames.
	
	By Proposition \ref{prop:preservation of inductive structure}, $\tau_\epsilon(\phi)\leq \tau_\epsilon(\psi)$ is an $(\Omega, \epsilon)$-inductive $\mathcal{L}^\circ$-inequality, and hence, by Proposition \ref{prop:correspondence theorem in lstar}, $\tau_\epsilon(\phi)\leq \tau_\epsilon(\psi)$ has a first-order correspondent $\mathsf{FO}(\phi)$ on $\mathcal{L}^\circ$-frames. Therefore $\mathbb{F}\Vdash^* \tau_\epsilon(\phi)\leq \tau_\epsilon(\psi)$ iff $\mathbb{F}\models\mathsf{FO}(\phi)$. Since the first-order frame correspondence languages of $\mathcal{L}$ and $\mathcal{L}^\circ$ are the same, it follows that $\mathsf{FO}(\phi)$ is also the first-order correspondent of $\phi\leq \psi$.
	The steps of this argument are summarized in  the following chain of equivalences:
  \[
    \text{
		\begin{tabular}[b]{r l r}
			&$\mathbb{F}\Vdash\phi\leq \psi$&\\
			iff &$\bbA\models\phi\leq \psi$& (discrete duality for $\mathcal{L}$-frames)\\
			iff &$\bbB \models\tau_{\epsilon}(\phi)\leq \tau_{\epsilon}(\psi)$&(Proposition \ref{prop:existence of algebras and adjoints:perfectDLEs}, Corollary 
			\ref{cor:main theorem for tau epsilon DLE})\\
			iff &$\mathbb{F}\Vdash^*\tau_{\epsilon}(\phi)\leq \tau_{\epsilon}(\psi)$&(discrete duality for $\mathcal{L}^\circ$-frames)\\
			iff &$\mathbb{F}\models\mathsf{FO}(\phi)$&(Proposition \ref{prop:correspondence theorem in lstar})
		\end{tabular}}
    \tag*{\qedhere}
  \]
\end{proof}

\begin{remark}
	In Example \ref{ex:various DLE-languages}, we showed that the languages of  Rauszer's bi-intuitionistic logic, Fischer Servi's intuitionistic modal logic, Wolter's bi-intuitionistic modal logic, Bezhanishvili's MIPC with universal modalities, Dunn's positive modal logic and Gehrke Nagahashi and Venema's Distributive Modal Logic are specific instances of DLE-logics. Hence, Theorem \ref{theor:corr via transl}, applied to each of these settings, enables one to transfer generalized Sahlqvist correspondence theorems to each of these logics. In all these settings but the latter two, this transferability is a new result which can be added to the list of known transfer results for these logics (cf.~e.g.~\cite[Section 4.1]{chagrov1992modal}). In positive modal logic and distributive modal logic, \cite[Theorem 3.7]{GeNaVe05} proves the transfer of Sahlqvist correspondence. Our result strengthens this to the transfer of correspondence for the larger class of inductive formulas.\footnote{In \cite{GorankoV06} it is shown that inductive formulas exist which are not semantically equivalent to any Sahlqvist formula.}
\end{remark}

\section{Canonicity via translation}\label{sec:canonicity}

In this section we apply the results of Section \ref{sec:instantiations}, and in particular those of Section \ref{sec:ParamGMTbHAE}, to show that the canonicity of the inductive $\mathcal{L}_{\mathrm{bHAE}}$-inequalities transfers from classical multi-modal logic via parametrized GMT translations. The proof strategy of this result does not generalize successfully to DLE-logics or, indeed, to intuitionistic or co-intuitionistic modal logics. We discuss the reasons for this and propose possible alternative strategies.


\subsection{Canonicity transfer to inductive bHAE-inequalities}\label{ssec:canonicity bHAE}

Throughout the present section, let us fix a bHAE-signature $\mathcal{L}_{\mathrm{bHAE}} = \mathcal{L}_{\mathrm{bHAE}}(\mathcal{F}, \mathcal{G})$, and let   $\mathcal{L}^\circ_{\mathrm{BAE}} =  \mathcal{L}_{\mathrm{BAE}}(\mathcal{F}^\circ, \mathcal{G}^\circ)$ be the target language for the parametric GMT translations for $\mathcal{L}_{\mathrm{bHAE}}$ (cf.~Section \ref{sssection:parametric DLE}). The following canonicity theorem is a straightforward algebraic reformulation of the canonicity result for classical normal polyadic modal logic in \cite{GorankoUnleashed} and \cite{ConGorVakSQEMA2}:
\begin{prop}\label{prop:canonicity theorem in lstar}
	For every inductive $\mathcal{L}^\circ_{\mathrm{BAE}}$-inequality $\alpha\leq\beta$ and every $\mathcal{L}^\circ_{\mathrm{BAE}}$-algebra $\bbB$,
	\[\mbox{if } \bbB\models \alpha\leq\beta \ \mbox{ then } \bbB^\delta\models \alpha\leq\beta.\]
\end{prop}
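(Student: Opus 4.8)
The plan is to derive the statement from the general ALBA-based canonicity machinery for classical polyadic modal logic developed in \cite{GorankoUnleashed, ConGorVakSQEMA2}. The first observation is that $\mathcal{L}^\circ_{\mathrm{BAE}}$ is a normal Boolean algebra expansion signature: its lattice reduct is Boolean (hence equipped with a Boolean negation), and every additional connective --- each $f^\circ \in \mathcal{F}^\circ$, each $g^\circ \in \mathcal{G}^\circ$, together with the S4-connectives $\Diamond_\geq$ and $\Box_\leq$ --- is a normal operator or dual operator. Consequently, the notion of $(\Omega,\epsilon)$-inductive inequality (Definition \ref{Inducive:Ineq:Def}), specialized to this signature, coincides with the notion of inductive inequality for the classical poly-modal logic of the corresponding similarity type, with Boolean negation allowed to occur unrestrictedly. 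Thus it suffices to reformulate, in the present algebraic language, the canonicity of inductive inequalities in that classical setting.

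Concretely, I would run the algorithm ALBA on $\alpha \leq \beta$ and establish canonicity via the standard ``U-shaped'' argument. Writing $\models_{\mathsf{A}}$ for validity on $\bbB^\delta$ under \emph{admissible} assignments (i.e.\ those sending variables to elements of the subalgebra $\bbB \hookrightarrow \bbB^\delta$), the argument proceeds along the chain
\begin{align*}
\bbB \models \alpha \leq \beta
&\Longleftrightarrow \bbB^\delta \models_{\mathsf{A}} \alpha \leq \beta \\
&\Longleftrightarrow \bbB^\delta \models_{\mathsf{A}} \mathrm{ALBA}(\alpha \leq \beta) \\
&\Longleftrightarrow \bbB^\delta \models \mathrm{ALBA}(\alpha \leq \beta) \\
&\Longleftrightarrow \bbB^\delta \models \alpha \leq \beta.
\end{align*}
The first equivalence is immediate, since the admissible assignments into $\bbB^\delta$ are exactly the assignments into $\bbB$, and the $\sigma$- and $\pi$-extensions restrict to the original operations on $\bbB$. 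The second equivalence holds because each ALBA transformation rule is sound with respect to admissible assignments on the canonical extension. The third is the crucial pay-off of inductivity: since $\alpha \leq \beta$ is inductive, ALBA succeeds and returns a \emph{pure} quasi-inequality $\mathrm{ALBA}(\alpha \leq \beta)$ containing no propositional variables, so that admissible and arbitrary assignments agree. The last equivalence holds because the ALBA rules are also sound on the \emph{perfect} algebra $\bbB^\delta$ under arbitrary assignments.

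The main obstacle --- and the only step requiring genuine work --- is the soundness of the ALBA rules under admissible assignments on $\bbB^\delta$ (the second equivalence above). This rests on the topological properties of the canonical extension: the denseness and compactness of $\bbB^\delta$, the behaviour of the $\sigma$- and $\pi$-extensions of the operators on closed and open elements, and the resulting topological Ackermann lemmas that justify the variable-elimination steps. Since $\bbB^\delta$ is the canonical extension of a Boolean algebra with normal operators, these facts are precisely those established in \cite{GorankoUnleashed, ConGorVakSQEMA2}, so no argument beyond their algebraic reformulation is needed; in particular, $\Diamond_\geq$ and $\Box_\leq$ are treated on exactly the same footing as the other members of $\mathcal{F}^\circ$ and $\mathcal{G}^\circ$ and pose no additional difficulty.
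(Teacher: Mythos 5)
Your proposal is correct and follows exactly the route the paper takes: the paper offers no independent proof but states that the proposition is ``a straightforward algebraic reformulation of the canonicity result for classical normal polyadic modal logic'' in \cite{GorankoUnleashed} and \cite{ConGorVakSQEMA2}, and your U-shaped ALBA argument is precisely the content of those results, faithfully transposed to the $\mathcal{L}^\circ_{\mathrm{BAE}}$-signature. Your closing remark is also the right one: since $\bbB$ is here assumed to be a full $\mathcal{L}^\circ_{\mathrm{BAE}}$-algebra (hence closed under $\Diamond_\geq$ and $\Box_\leq$), the topological Ackermann/Esakia machinery applies without the complications the paper later identifies in Section \ref{ssec:Gen:Canon:Does:Not:Work}, which arise only when the subalgebra fails to be closed under these operations.
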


In what follows, we show that the canonicity of inductive $\mathcal{L}^\circ_{\mathrm{BAE}}$-inequalities, given by the proposition above, transfers to inductive $\mathcal{L}_{\mathrm{bHAE}}$-inequalities via suitable parametrized GMT translations.


\begin{theorem}[Canonicity  via translation]\label{theor:canon via transl bi HAE}
	The canonicity theorem for inductive $\mathcal{L}^\circ_{\mathrm{BAE}}$-inequalities transfers to inductive  $\mathcal{L}_{\mathrm{bHAE}}$-inequalities.
\end{theorem}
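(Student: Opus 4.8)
The plan is to run the same three-layer argument as in Theorem \ref{theor:corr via transl}, but to replace the passage ``concrete algebra $\to$ frame $\to$ translated frame'' by the passage ``arbitrary algebra $\to$ its canonical extension'', and to feed it through the \emph{target-side} canonicity of Proposition \ref{prop:canonicity theorem in lstar}. The crucial difference from the correspondence case is that we must use the bHAE-specific embedding of Proposition \ref{prop:existence of algebras and adjoints:bHAE}, which is available for \emph{arbitrary} (not necessarily perfect) algebras, rather than Proposition \ref{prop:existence of algebras and adjoints:perfectDLEs}, which applies only to perfect ones.

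So fix an $(\Omega,\epsilon)$-inductive $\mathcal{L}_{\mathrm{bHAE}}$-inequality $\phi\leq\psi$ and an arbitrary $\mathcal{L}_{\mathrm{bHAE}}$-algebra $\bbA$ with $\bbA\models\phi\leq\psi$; the goal is to show $\bbad\models\phi\leq\psi$. First I would invoke Proposition \ref{prop:existence of algebras and adjoints:bHAE} to obtain an $\mathcal{L}^\circ_{\mathrm{BAE}}$-algebra $\bbB$ with an embedding $e\colon\bbA\hookrightarrow\bbB$ whose adjoints $c,\iota$ verify \eqref{eq: rightarrow Rightarrow e and iota}, \eqref{eq: pdra minus e and c} and \eqref{eq: e and c and iota and f and g}, and---decisively---the part of that proposition asserting that the whole configuration lifts to the canonical extensions, so that $e^\delta\colon\bbad\hookrightarrow\bbbd$ is an order-embedding and lattice homomorphism with left and right adjoints $c^\sigma,\iota^\pi$ satisfying the same equations now at the level of $\bbad,\bbbd$. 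Next, I would apply Corollary \ref{cor:main theorem for tau epsilon bi} to $e\colon\bbA\hookrightarrow\bbB$ to transport validity upward, obtaining $\bbB\models\tau_\epsilon(\phi)\leq\tau_\epsilon(\psi)$. By Proposition \ref{prop:preservation of inductive structure}, the translated inequality is again $(\Omega,\epsilon)$-inductive, so Proposition \ref{prop:canonicity theorem in lstar} yields $\bbbd\models\tau_\epsilon(\phi)\leq\tau_\epsilon(\psi)$. Finally, I would apply Corollary \ref{cor:main theorem for tau epsilon bi} a second time, now to $e^\delta\colon\bbad\hookrightarrow\bbbd$ (here $\bbad$ is again a bHAE and $\bbbd$ an $\mathcal{L}^\circ_{\mathrm{BAE}}$-algebra), to transport validity back downward and conclude $\bbad\models\phi\leq\psi$. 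In schematic form:
\[
\bbA\models\phi\leq\psi
\ \Leftrightarrow\ \bbB\models\tau_\epsilon(\phi)\leq\tau_\epsilon(\psi)
\ \Rightarrow\ \bbbd\models\tau_\epsilon(\phi)\leq\tau_\epsilon(\psi)
\ \Leftrightarrow\ \bbad\models\phi\leq\psi ,
\]
where the two biconditionals are the two instances of Corollary \ref{cor:main theorem for tau epsilon bi} and the implication is Proposition \ref{prop:preservation of inductive structure} followed by Proposition \ref{prop:canonicity theorem in lstar}.

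The step I expect to be the main obstacle---and the reason the argument is confined to bHAEs---is the \emph{second} application of Corollary \ref{cor:main theorem for tau epsilon bi}, at the canonical-extension level: it requires a genuine embedding of $\bbad$ into some $\mathcal{L}^\circ_{\mathrm{BAE}}$-algebra, equipped with \emph{both} adjoints satisfying the commutativity conditions, and it must be the \emph{same} target used for $\bbA$ (namely $\bbbd$) so that target-side canonicity can bridge $\bbB$ to $\bbbd$. What makes this possible is precisely the lifting clause of Proposition \ref{prop:existence of algebras and adjoints:bHAE}, whose engine is the observation that the defining conditions \eqref{eq: rightarrow Rightarrow e and iota}, \eqref{eq: pdra minus e and c} and \eqref{eq: e and c and iota and f and g} can be written as Sahlqvist-shaped inequalities and are therefore canonical, so they survive the passage to $e^\delta,c^\sigma,\iota^\pi$. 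By contrast, in the general DLE setting only Proposition \ref{prop:existence of algebras and adjoints:perfectDLEs} is at hand, which manufactures such adjoints only for \emph{perfect} algebras; since canonicity requires an embedding of the arbitrary algebra $\bbA$ itself, the first step of the chain then has no analogue, which is exactly the gap flagged in the discussion following Remark \ref{rmk: mix sufficient but not necessary}.
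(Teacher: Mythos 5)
Your proof is correct and follows essentially the same route as the paper's own argument: the same U-shaped chain built from Proposition \ref{prop:existence of algebras and adjoints:bHAE}, two applications of Corollary \ref{cor:main theorem for tau epsilon bi} (once for $e$ and once for $e^\delta$), Proposition \ref{prop:preservation of inductive structure}, and the target-side canonicity of Proposition \ref{prop:canonicity theorem in lstar}. Your closing remarks on why the lifting clause of Proposition \ref{prop:existence of algebras and adjoints:bHAE} is the decisive ingredient, and why the argument does not extend beyond bHAEs, also match the paper's own discussion.
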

\begin{proof}
	Fix an $\mathcal{L}_{\mathrm{bHAE}}$-algebra $\bbA$ and let $\phi\leq\psi$ be an inductive $\mathcal{L}_{\mathrm{bHAE}}$-inequality such that $\bbA\models \phi\leq \psi$. We show $\bba^\delta\models \phi\leq \psi$. By Proposition \ref{prop:existence of algebras and adjoints:bHAE},  an $\mathcal{L}^\circ_{\mathrm{BAE}}$-algebra $\bbB$ exists with a natural embedding $e:\bbA\hookrightarrow\bbB$  which is a homomorphism of the lattice reducts of $\bbA$ and $\bbB$ and  has both a right adjoint $\iota\colon \bbB\to \bbA$ and a left adjoint $c\colon \bbB\to \bbA$ such that conditions \eqref{eq: rightarrow Rightarrow e and iota}, \eqref{eq: pdra minus e and c}, and \eqref{eq: e and c and iota and f and g} hold. Hence, Corollary \ref{cor:main theorem for tau epsilon bi} is applicable, which yields $\bbA\models \phi\leq \psi$ iff $\bbB\models \tau_\epsilon(\phi)\leq \tau_\epsilon(\psi)$.
	
	By Proposition \ref{prop:preservation of inductive structure}, $\tau_\epsilon(\phi)\leq \tau_\epsilon(\psi)$ is an $(\Omega, \epsilon)$-inductive $\mathcal{L}^\circ_{\mathrm{BAE}}$-inequality, and hence, by Proposition \ref{prop:canonicity theorem in lstar}, $\bbB^\delta\models\tau_\epsilon(\phi)\leq \tau_\epsilon(\psi)$. By the last part of the statement of Proposition \ref{prop:existence of algebras and adjoints:bHAE}, Corollary \ref{cor:main theorem for tau epsilon bi} applies also to $\bbA^\delta$ and $\bbB^\delta$, and thus $\bbA^\delta\models \phi\leq \psi$, as required.
	The steps of this argument are summarized in  the following U-shaped diagram:
  \[
		\begin{tabular}[b]{l c l}\label{table:U:shape:algebra}
			
			$\bbA\models\phi\leq\psi$ & & $\bbad\models \phi\leq\psi$ \\
			$\ \ \ \ \Updownarrow$ (Prop \ref{prop:existence of algebras and adjoints:bHAE}, Cor \ref{cor:main theorem for tau epsilon bi})& & $\ \ \ \ \ \Updownarrow $ (Prop \ref{prop:existence of algebras and adjoints:bHAE}, Cor \ref{cor:main theorem for tau epsilon bi})\\
			$\bbB\models\tau_\epsilon(\phi)\leq\tau_\epsilon(\psi)$ &\ \ \ $\Leftrightarrow$ \ \ \  &  $\bbbd\models\tau_\epsilon(\phi)\leq\tau_\epsilon(\psi)$\\
		\end{tabular}
    \tag*{\qedhere}
  \]
\end{proof}

\begin{remark}
	\label{rmk: comparison canonicity}
	Theorem \ref{theor:canon via transl bi HAE} applies to both  Rauszer's bi-intuitionistic logic\footnote{As discussed in Example \ref{ex: non inductive}, not all axioms in Rauszer's axiomatization of bi-intuitionistic logic are inductive. However, in \cite{gore2000dual}, Gor\'e introduces a proper display calculus for bi-intuitionistic logic, which implies, by the characterization given in \cite{GrMaPaTzZh15}, that an axiomatization for bi-intuitionistic logic exists which consists only of inductive formulas.}  \cite{rauszer1974semi} and  Wolter's bi-intuitionistic modal logic \cite{wolter1998CoImplication}. Hence,   transfer of generalized Sahlqvist canonicity theorems is available for each of these logics.  This transferability is a new result for these settings, and is different  from the transfer of d-persistence as was shown e.g.~in \cite[Theorem 12]{WoZa98} in the context of intuitionistic modal logic, in at least two respects; first, it hinges specifically on the preservation and reflection of the shape of inductive formulas; second, it does not rely on any assumptions about the interaction between the S4 and other modalities in the target logic such as those captured by the mix axiom (cf.~Remark \ref{rmk: mix sufficient but not necessary}).
\end{remark}

\subsection{Generalizing the canonicity-via-translation argument}\label{ssec:Gen:Canon:Does:Not:Work}

In the present subsection, we discuss the extent to which the  proof pattern described in the previous subsection can be applied  to the settings of normal Heyting and co-Heyting algebra expansions (HAEs, cHAEs), and to normal DLEs. In the setting of  bHAEs, the order embedding $e$ has both a left and a right adjoint, the existence of which is shown in Proposition \ref{prop:existence of algebras and adjoints:bHAE}, while for HAEs, cHAEs and DLEs 
at most one of the two adjoints was shown to exist  in general (cf.\ Propositions \ref{prop:existence of algebras and adjoints:Heyting} and  \ref{prop:existence of algebras and adjoints:coHeyting}), while both adjoints exist if the algebra is perfect (cf.~Proposition \ref{prop:existence of algebras and adjoints:perfectDLEs}). 

This implies that the  the U-shaped argument discussed in the proof of Theorem \ref{theor:canon via transl bi HAE} 
is not straightforwardly applicable to HAEs, cHAEs and DLEs. Indeed, in each of these settings, the  equivalence  on the side of the perfect algebras can still be argued using Proposition \ref{prop:existence of algebras and adjoints:perfectDLEs} and Corollary \ref{cor:main theorem for tau epsilon DLE}, 
but the one on the side of general algebras  (left-hand side of the diagram) cannot, precisely because Proposition \ref{prop:existence of algebras and adjoints:perfectDLEs} does not generalize to arbitrary DLEs (resp.~HAEs, cHAEs).

\begin{center}
	\begin{tabular}{l c l}
		
		$\bbA\models\phi\leq\psi$ & & $\bbad\models \phi\leq\psi$ \\
		$\ \ \ \ \Updownarrow$ ?  & & $\ \ \ \ \ \Updownarrow $ (Prop \ref{prop:existence of algebras and adjoints:perfectDLEs}, Cor \ref{cor:main theorem for tau epsilon DLE})\\
		$\bbB\models\tau_\epsilon(\phi)\leq\tau_\epsilon(\psi)$ &\ \ \ $\Leftrightarrow$ \ \ \  &  $\bbbd\models\tau_\epsilon(\phi)\leq\tau_\epsilon(\psi)$\\
	\end{tabular}
\end{center}
In what follows, we employ a more refined argument to show that the left-hand side equivalence holds. That is, the question mark in the U-shaped diagram above can be replaced by Proposition \ref{prop: main prop of Godel tarski topologically} below. We work in the setting of $\mathcal{L}$-algebras for an arbitrarily fixed DLE-signature $\mathcal{L}$, with $\mathcal{L}^\circ$ its associated target signature. 
Recall that the canonical extension $e^\delta: \bbad\to \bbbd$ of the embedding $e: \bbA\hookrightarrow \bbB$ is a complete lattice homomorphism, and hence both its left and right adjoints exist, which we respectively denote $c: \bbbd\to \bbad$ and $\iota: \bbbd\to \bbad$. 
It is well known from the theory of canonical extensions that  $c(b)\in K(\bbad)$ and $\iota(b)\in O(\bbad)$ for every $b\in \bbB$ (cf.~\cite[Lemma 10.3]{ConPal13}). 
Hence, if $r_\epsilon: (\bbbd)^X\to (\bbad)^X$ is the map defined for any $U\in (\bbbd)^X$ and $p\in X$ by:
\[
r_\epsilon (U)(p) = \begin{cases}
(\iota\circ U) (p) & \mbox{if } \epsilon(p) = 1\\
(c\circ U) (p) & \mbox{if } \epsilon(p) = \partial\\
\end{cases}
\]

\noindent then  $(r_{\epsilon}(U)) (p) \in \kbbas $ if $\epsilon(p) = \partial$ and $ (r_{\epsilon}(U))(p) \in \obbas$ if $\epsilon(p) = 1$
for any `admissible valuation' $U\in \bbB^X$ and $p\in X$. 

\begin{lemma}\label{lemma:(a) and (b) hold refined version}
	Let  $\bbA$ be an $\mathcal{L}$-algebra, and $e:\bbA\hookrightarrow\bbB$ be an embedding of $\bbA$ into an $\mathcal{L}^\circ$-algebra $\bbB$ which is a homomorphism of the lattice reducts of $\bbA$ and $\bbB$  such that the left and right adjoints of $e^\delta: \bbad\to \bbbd$ make the diagrams \eqref{eq: e and c and iota and f and g} commute.
	Then, for every order-type $\epsilon$ on $X$,  
	the following conditions hold for every $\phi \in \mathcal{L}$:
	\begin{enumerate}[label={(\alph*)}]
		\item $e(\val{ \phi}_V) = \valb{\tau_\epsilon(\phi)}_{\overline{e}(V)}$ for every $V \in \bbA^X$;
		\item $\valb{ \tau_\epsilon (\phi) }_U = e^\delta(\val{\phi}_{r_\epsilon(U)})$ for every $U \in \bbB^X$.
	\end{enumerate}
\end{lemma}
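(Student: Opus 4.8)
The plan is to prove (a) and (b) together by induction on the structure of $\phi$, running the two parallel computations exactly as in the proof of Proposition \ref{prop:  tau epsilon for DLE satisfies a and b}, but now with $e$, $c$, $\iota$ replaced by $e^\delta$ and its adjoints $c,\iota\colon\bbbd\to\bbad$, and with the right-hand side of (b) formed inside the complete algebra $\bbad$. The ingredients I will use throughout are: (i) $e^\delta$ is a complete lattice homomorphism, hence commutes with arbitrary joins and meets and fixes $\bot,\top$; (ii) since $e^\delta$ is an order-embedding with $c\dashv e^\delta\dashv\iota$, one has $c\circ e^\delta=\mathrm{id}_{\bbad}=\iota\circ e^\delta$; (iii) the S4-operations on $\bbbd$ are $(\Diamond_\geq)^{\bbbd}=e^\delta\circ c$ and $(\Box_\leq)^{\bbbd}=e^\delta\circ\iota$; and (iv) the commuting diagrams \eqref{eq: e and c and iota and f and g} for $e^\delta,c,\iota$, which hold by hypothesis. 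I will also use that, for an admissible valuation in $\bbB^X$, the interpretation of an $\mathcal{L}^\circ$-formula may be computed directly in $\bbbd$, since the canonical extension of every $\mathcal{L}^\circ$-operation restricts to the original operation on $\bbB$.

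First I would dispatch part (a). Since $V\in\bbA^X$ takes values in $\bbA$ and $e^\delta$ restricts to $e$ there, every quantity in the computation remains at the base level, and the argument is verbatim that of Proposition \ref{prop:  tau epsilon for DLE satisfies a and b}(a): in the propositional base case one uses $e^\delta\circ\iota\circ e=e$ (for $\epsilon(p)=1$) and $e^\delta\circ c\circ e=e$ (for $\epsilon(p)=\partial$), both instances of (ii); in the $\wedge,\vee,\bot,\top$ steps one uses (i) and that $e$ is a lattice homomorphism; and in the $f$- and $g$-steps one uses the diagrams of (iv) restricted to $\bbA$-inputs. So (a) requires no genuinely new idea once one observes that the adjoints of $e^\delta$ do, on $\bbA$-values, the same job the base-level adjoints did in the bHAE case.

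The substance is part (b), which I expect to be the main locus of difficulty. Here $r_\epsilon(U)\in(\bbad)^X$, so $\val{\phi}_{r_\epsilon(U)}$ is formed in $\bbad$ using the $\sigma$- and $\pi$-extended operations $f^{\bbad}=f^\sigma$, $g^{\bbad}=g^\pi$, and the point is to carry $e^\delta$ through them. The base case $\phi=p$ with $\epsilon(p)=\partial$ reads $\valb{\Diamond_\geq p}_U=(\Diamond_\geq)^{\bbbd}(U(p))=e^\delta(c(U(p)))=e^\delta(\val{p}_{r_\epsilon(U)})$ by (iii) and the definition of $r_\epsilon$, and dually for $\epsilon(p)=1$; the Boolean steps follow from (i) and the induction hypothesis. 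For the $f$-step I would compute
\begin{align*}
\valb{\tau_\epsilon(f(\overline\phi))}_U
&= (\Diamond_\geq)^{\bbbd}\bigl((f^\circ)^{\bbbd}(\overline{\valb{\tau_\epsilon(\phi)}_U}^{\,\epsilon_f})\bigr)
= (e^\delta\circ c)\bigl((f^\circ)^{\bbbd}((e^\delta)^{\epsilon_f}(\overline{\val{\phi}_{r_\epsilon(U)}}))\bigr)\\
&= e^\delta\bigl(f^{\bbad}(\overline{\val{\phi}_{r_\epsilon(U)}})\bigr)
= e^\delta(\val{f(\overline\phi)}_{r_\epsilon(U)}),
\end{align*}
where the induction hypothesis is used in the first line and the diagram $c\circ(f^\circ)^{\bbbd}\circ(e^\delta)^{\epsilon_f}=f^{\bbad}$ of (iv) in the second; the $g$-step is the exact dual, using $\iota$, $(\Box_\leq)^{\bbbd}$ and $\iota\circ(g^\circ)^{\bbbd}\circ(e^\delta)^{\epsilon_g}=g^{\bbad}$.

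The delicate point, and the reason this refined lemma is needed rather than a direct appeal to Proposition \ref{prop:  tau epsilon for DLE satisfies a and b}, is that for a general (non-perfect) DLE $\bbA$ the embedding $e$ itself has no adjoints, so the diagrams \eqref{eq: e and c and iota and f and g} are available only at the level of canonical extensions. The step that genuinely needs justification is thus that these diagrams hold for the $\sigma/\pi$-extended operations $f^{\bbad},(f^\circ)^{\bbbd}$ together with the $e^\delta$-adjoints, i.e.\ that the defining identities $f=c\,f^\circ e^{\epsilon_f}$ and $g=\iota\,g^\circ e^{\epsilon_g}$ survive canonical extension; this is precisely the hypothesis of the lemma, and it is legitimate because those identities are (pairs of) Sahlqvist inequalities and hence canonical, as recorded in the proof of Proposition \ref{prop:existence of algebras and adjoints:bHAE}. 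Finally, the closed/open placement of $r_\epsilon(U)$ emphasised above (values in $\kbbas$ when $\epsilon(p)=\partial$ and in $\obbas$ when $\epsilon(p)=1$) is what guarantees, when this lemma is subsequently fed into Proposition \ref{prop: main prop of Godel tarski topologically}, that the lifted diagrams are applied only to admissible arguments.
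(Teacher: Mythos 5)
Your proof is correct and follows essentially the same route as the paper, whose entire argument is that the statement follows from Proposition \ref{prop:  tau epsilon for DLE satisfies a and b} applied to $e^\delta\colon \bbad\to\bbbd$; you have simply unfolded that application, correctly observing that part (a) reduces to the base-level computation while part (b) runs the same induction with $e^\delta$ and its adjoints using the lifted diagrams, which are a hypothesis of the lemma.
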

\begin{proof}
	The statement immediately follows from Proposition \ref{prop:  tau epsilon for DLE satisfies a and b} applied to  $e^\delta: \bbad\to \bbbd$. 
\end{proof}

\begin{prop}\label{prop: main prop of Godel tarski topologically}
	Let  $\bbA$ be an $\mathcal{L}$-algebra, and $e:\bbA\hookrightarrow\bbB$ be an embedding of $\bbA$ into an $\mathcal{L}^\circ$-algebra $\bbB$ which is a homomorphism of the lattice reducts of $\bbA$ and $\bbB$ such that the left and right adjoints of $e^\delta: \bbad\to \bbbd$ make the diagrams \eqref{eq: e and c and iota and f and g} commute.  
	Then, 
	for every $(\Omega, \epsilon)$-inductive $\mathcal{L}$-inequality $\phi \leq \psi$,
	\[
	\bbA^\delta \models_\bbA \phi\leq \psi \quad \mbox{ iff } \quad \bbB^\delta \models_\bbB \tau_{\epsilon}(\phi)\leq \tau_{\epsilon}(\psi).
	\]
\end{prop}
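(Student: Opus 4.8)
The plan is to read off the two implications from the full-and-faithfulness conditions (a) and (b) supplied by Lemma \ref{lemma:(a) and (b) hold refined version}, following the pattern of the proof of Proposition \ref{prop: main prop of Godel tarski algebraically}, but now with $e$ replaced by its canonical extension $e^\delta\colon\bbad\to\bbbd$ and with $r_\epsilon$ taking values among the closed and open elements of $\bbad$. First I would record that, since $\bbA$ (resp.\ $\bbB$) is a subalgebra of $\bbad$ (resp.\ $\bbbd$) on which the $\sigma$- and $\pi$-extensions of the operations restrict to the original operations, admissible validity $\bbad\models_\bbA\phi\leq\psi$ coincides with $\bbA\models\phi\leq\psi$, and $\bbbd\models_\bbB\tau_\epsilon(\phi)\leq\tau_\epsilon(\psi)$ with $\bbB\models\tau_\epsilon(\phi)\leq\tau_\epsilon(\psi)$. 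Hence the statement is exactly the left-hand equivalence marked by the question mark in the U-shaped diagram.

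For the right-to-left implication I would use only condition (a). Given $V\in\bbA^X$, the lifted valuation $\overline{e}(V)=e\circ V$ lies in $\bbB^X$, and (a) yields $e(\val{\phi}_V)=\valb{\tau_\epsilon(\phi)}_{\overline{e}(V)}$ and $e(\val{\psi}_V)=\valb{\tau_\epsilon(\psi)}_{\overline{e}(V)}$; so if the translated inequality holds under all admissible $\bbB$-valuations, then $e(\val{\phi}_V)\leq e(\val{\psi}_V)$, and since $e$ (equivalently $e^\delta$) is an order-embedding, $\val{\phi}_V\leq\val{\psi}_V$. This direction needs neither the inductive shape of the inequality nor any property of $r_\epsilon$.

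The left-to-right implication is where the work concentrates. Using (b), $\valb{\tau_\epsilon(\phi)}_U=e^\delta(\val{\phi}_{r_\epsilon(U)})$ for every $U\in\bbB^X$, and since $e^\delta$ is an order-embedding the goal $\bbbd\models_\bbB\tau_\epsilon(\phi)\leq\tau_\epsilon(\psi)$ reduces to proving $\val{\phi}_{r_\epsilon(U)}\leq\val{\psi}_{r_\epsilon(U)}$ for all such $U$. The crux is that $r_\epsilon(U)$ is not an admissible valuation: as noted before the statement, $(r_\epsilon(U))(p)\in\obbas$ when $\epsilon(p)=1$ and $(r_\epsilon(U))(p)\in\kbbas$ when $\epsilon(p)=\partial$. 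Thus what must be established is the key lemma that, for an $(\Omega,\epsilon)$-inductive inequality (Definition \ref{Inducive:Ineq:Def}), admissible validity $\bbad\models_\bbA\phi\leq\psi$ propagates to validity under every valuation of the form $r_\epsilon(U)$, $U\in\bbB^X$ (equivalently, under valuations sending the $\epsilon$-type-$1$ variables to open and the $\epsilon$-type-$\partial$ variables to closed elements of $\bbad$).

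This key lemma is the main obstacle, and I expect it to require the full topological apparatus of unified correspondence rather than a short argument. Concretely, I would derive it from the soundness on $\bbad$, under admissible valuations, of the ALBA reduction applied to $\phi\leq\psi$: the good-branch structure guarantees that the Skeleton part of each $\epsilon$-critical branch can be handled by the approximation rules appropriate to the closed/open assignment dictated by the adjoints $c\dashv e^\delta\dashv\iota$ — which are sound precisely because closed elements are meets and open elements are joins of elements of $\bbA$, together with the compactness of the canonical extension — while the SRA/PIA part is handled by residuation, and the dependency order $<_\Omega$ ensures the variable eliminations via the topological Ackermann lemma are well-founded; the minimal valuations so produced are exactly the closed/open elements through which $r_\epsilon(U)$ factors. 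I would stress, finally, that in the presence of (a)--(b), Proposition \ref{prop:canonicity theorem in lstar}, Proposition \ref{prop:existence of algebras and adjoints:perfectDLEs} and Corollary \ref{cor:main theorem for tau epsilon DLE}, this proposition is seen to be equivalent to the canonicity of $\phi\leq\psi$ itself; so the key lemma cannot be cheaper than a direct canonicity proof, which is exactly why the translation route offers no economy in the general DLE setting.
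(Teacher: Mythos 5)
Your proposal is correct and follows essentially the same route as the paper's own proof: the right-to-left direction is condition (a) plus the fact that $e^\delta$ is an order-embedding, and the left-to-right direction reduces via condition (b) to exactly the key lemma you isolate --- converting a refutation under the non-admissible valuation $r_\epsilon(U)$ into one under an admissible valuation --- which the paper likewise establishes by running ALBA on $\phi\leq\psi$ along $<_\Omega$, using that the pure minimal-valuation terms are syntactically open (resp.\ closed) and hence denote elements of $O(\bbad)$ (resp.\ $K(\bbad)$), and applying compactness to squeeze an element $a\in\bbA$ between $r_\epsilon(U)(q)$ and the minimal valuation, together with preservation of truth under assignments by the ALBA steps. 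The only caution is that your opening identification of $\bbbd\models_{\bbB}\tau_\epsilon(\phi)\leq\tau_\epsilon(\psi)$ with validity ``in $\bbB$'' should be read as a definition rather than a fact, since $\Box_\leq$ and $\Diamond_\geq$ are interpreted only in $\bbbd$ and $\bbB$ need not be closed under them --- but this does not affect your argument.
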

\begin{proof}[Sketch of proof]
	From right to left, if $(\bbA^\delta,V) \not\models \phi\leq \psi$ for some $V \in \bbA^X$, then $\val{\phi}_{V}\not\leq \val{\psi}_{V}$. By Lemma \ref{lemma:(a) and (b) hold refined version} (a), 
	this implies that $\valb{\tau_{\epsilon}(\phi)}_{\overline{e}(V)} = e(\val{\phi}_{V})\not\leq e(\val{\psi}_{V}) = \valb{\tau_{\epsilon}(\psi)}_{\overline{e}(V)}$, that is $(\bbB^\delta, \overline{e}(V)) \not\models \tau_{\epsilon}(\phi)\leq \tau_{\epsilon}(\psi)$, as required.
	
	Conversely,  assume contrapositively that $(\bbB^\delta,U) \not\models \tau_{\epsilon}(\phi)\leq \tau_{\epsilon}(\psi)$ for some $U \in \bbB^X$, that is, $\valb{\tau_{\epsilon}(\phi)}_U\not\leq \valb{\tau_{\epsilon}(\psi)}_U$. By Lemma \ref{lemma:(a) and (b) hold refined version}  (b), this  is equivalent to $e^\delta(\val{\phi}_{r_\epsilon(U)})\not\leq e^\delta(\val{\psi}_{r_\epsilon(U)})$, which, by the monotonicity of $e^\delta$, implies that $\val{\phi}_{r_{\epsilon}(U)}\not\leq \val{\psi}_{r_{\epsilon}(U)}$, that is,  $(\bbA, r_{\epsilon}(U)) \not\models \phi\leq \psi$. This is not enough to finish the proof, since $r_{\epsilon}(U)$ is not guaranteed to belong in $\bbA^X$; however, as observed above, $(r_{\epsilon}(U)) (p) \in \kbbas $ if $\epsilon(p) = \partial$ and $ (r_{\epsilon}(U))(p) \in \obbas$ if $\epsilon(p) = 1$ for each proposition variable $p$. To finish the proof, we need to show that an admissible valuation $V' \in\bbA^X$ can be manufactured from $r_{\epsilon}(U)$ and $\phi\leq\psi$ in such a way that $(\bbA^\delta, V') \not\models\phi \leq \psi$.
	In what follows, we provide a sketch of the proof of the existence of the required $V'$.
	Assume that $\epsilon(q)=\partial$  for some proposition variable $q$  occurring in $\phi\leq\psi$ (the case of $\epsilon(q)=1$ is analogous and is omitted). Then
	we define $V'(q)\in\bbA$ 
	as follows. We run ALBA on $\phi\leq\psi$ according to the dependency order $<_\Omega$, up to the point when we solve for the negative occurrences of $q$, which by assumption are $\epsilon$-critical. Notice that ALBA preserves truth under assignments.\footnote{In \cite{ConPal12} it is proved that ALBA steps preserve validity of quasi-inequalities. In fact,  something stronger is ensured, namely that truth under assignments is preserved, modulo the values of introduced and eliminated variables. This notion of equivalence is studied in e.g.\ \cite{conradie:SQEMA4}. We are therefore justified in our assumption that the value of $q$ is held constant as are the values of all variables occurring in $\phi \leq \psi$ which have not yet been eliminated up to the point where $q$ is solved for.} Then the inequality providing the minimal valuation of $q$ is of the form $q\leq\alpha$, where $\alpha$ is {\em pure} (i.e.\ no proposition variables occur in $\alpha$). By  \cite[Lemma 9.5]{ConPal12}, every inequality in the antecedent of the quasi-inequality obtained by applying first approximation to an inductive inequality is of the form $\gamma\leq\delta$ with $\gamma$ syntactically closed and $\delta$ syntactically open. Hence, $\alpha$ is pure and syntactically open, which means that the interpretation of $\alpha$ is an element in $\obbas$. Therefore, by compactness, there exists some $a\in\bbA$ such that $r_{\epsilon}(U)(q)\leq a\leq\alpha$. Then we define $V'(q)=a$. Finally, it remains to be shown that $(\bbA^\delta, V')\not\models\phi \leq \psi$. This immediately follows from the fact that ALBA steps preserves truth under assignments, and that all the inequalities in the system are preserved in the change from  $r_{\epsilon}(U)$ to $V'$.
\end{proof}

However,  Proposition \ref{prop: main prop of Godel tarski topologically} is still not enough for the U-shaped argument above to go through. Indeed, 
notice that, whenever $e: \bbA\to \bbB$ misses one of the two adjoints (e.g.~the left adjoint),  for any $(\Omega, \epsilon)$-inductive $\mathcal{L}$-inequality $\phi\leq\psi$ containing some $q$ with $\epsilon(q)=\partial$, its translation $\tau_{\epsilon}(\phi)\leq\tau_{\epsilon}(\psi)$ contains occurrences of the connective $\Diamond_{\geq}$, the algebraic interpretation of which in $\bbbd$ is based on the left adjoint $c$ of $e^\delta$, which, as discussed above, maps elements in $\bbB$ to elements in $K(\bbbd)$. Hence, the canonicity of $\tau_{\epsilon}(\phi)\leq\tau_{\epsilon}(\psi)$, understood as  the preservation of its validity from $\bbB$ to $\bbB^\delta$, cannot be argued by appealing to  Proposition \ref{prop:canonicity theorem in lstar}: indeed, Proposition \ref{prop:canonicity theorem in lstar}  holds under the assumption that $\bbB$ is an $\mathcal{L}^\circ$-subalgebra of $\bbbd$, while, as discussed above, $\bbB$ is not in general closed under $\Diamond_{\geq}$.

In order to be able to adapt the canonicity-via-translation argument to the case of HAEs, cHAEs and DLEs, we would need to  strengthen Proposition \ref{prop:canonicity theorem in lstar} so as  to obtain 
the following equivalence for any 
inductive $\mathcal{L}^\circ$-inequality $\alpha\leq \beta$:
\begin{equation}\label{eq:generalized canonicity}
\bbB^\delta\models_{\bbB} \alpha\leq\beta\ \mbox{ iff }\ \bbB^\delta\models \alpha\leq\beta\end{equation} in a setting in which the interpretations of $\Diamond_{\geq}$ and $\Box_{\leq}$ exist only in $\bbB^{\delta}$ and all we have in general for $b \in \bbB$ is that $\Diamond^{\bbB^{\delta}}(b)\in K(\bbB^\delta)$  and $\Box^{\bbB^{\delta}}(b)\in O(\bbB^\delta)$.

Such a  strengthening cannot be straightforwardly obtained with the tools provided by the present state-of-the-art in canonicity theory.
To see where the problem lies, let us try and apply  ALBA/SQEMA in an attempt to prove the left-to-right direction of \eqref{eq:generalized canonicity} for the `Sahlqvist' inequality $\Box_{\leq}  p\leq  \Diamond_\geq \Box_{\leq} p$, assuming that $\Diamond_\geq$ is left adjoint to $\Box_\leq$, and  $\valb{\Box_{\leq}  p}_U\in O(\bbbd)$ and $\valb{\Diamond_\geq  p}_U\in K(\bbbd)$ for any admissible valuation $U\in \bbB^X$:
\begin{center}
	\begin{tabular}{cll}
		& $\bbbd\models_\bbB\forall p[\Box_{\leq}  p\leq  \Diamond_\geq \Box_{\leq} p]$\\
		iff & $\bbbd\models_\bbB\forall p\forall \nomi\forall\cnomm[(\nomi\leq  \Box_{\leq}  p \ \&\ \Diamond_\geq \Box_{\leq}  p\leq \cnomm)\Rightarrow \nomi\leq\cnomm]$\\
		iff & $\bbbd\models_\bbB\forall p\forall \nomi\forall\cnomm[(\Diamond_\geq\nomi\leq   p \ \&\ \Diamond_\geq \Box_{\leq}  p\leq \cnomm)\Rightarrow \nomi\leq\cnomm]$\\
	\end{tabular}
\end{center}
The minimal valuation term  $\Diamond_\geq\nomj$, computed by ALBA/SQEMA when solving for the negative occurrence of $p$, is closed. However, substituting this minimal valuation  into $\Diamond_\geq \Box_{\leq}  p\leq \cnomm$ would get us $\Diamond_\geq \Box_{\leq}   \Diamond_\geq\nomj\leq \cnomm$ with $\Diamond_\geq \Box_{\leq}   \Diamond_\geq\nomj$  neither closed nor open.
Hence, we cannot anymore appeal to the Esakia lemma in order to prove the following equivalence:\footnote{In other words, if $\bbB$ is not closed under $\Diamond_\geq$ or $\Box_\leq$,  the soundness of the application of the Ackermann rule under admissible assignments cannot be argued  anymore by appealing to the Esakia lemma, and hence, to the topological Ackermann lemma.}
\begin{center}
	\begin{tabular}{cll}
		& $\bbbd\models_\bbB\forall p\forall \nomi\forall\cnomm[(\Diamond_\geq\nomi\leq   p \ \&\ \Diamond_\geq \Box_{\leq}  p\leq \cnomm)\Rightarrow \nomi\leq\cnomm]$\\
		iff & $\bbbd\models_\bbB \forall \nomi\forall\cnomm[\Diamond_\geq \Box_{\leq}\Diamond_\geq\nomi\leq \cnomm\Rightarrow \nomi\leq\cnomm]$\\
	\end{tabular}
\end{center}

An analogous situation arises when solving for the positive occurrence of $p$. Other techniques for proving canonicity, such as J\'onsson-style canonicity \cite{Jonsson94,PaSoZh15a}, display the same problem, since they also rely on an Esakia lemma which is not available if $\bbB$ is not closed under $\Box_{\leq}$ and $\Diamond_\geq$.

\section{Conclusions and further directions}\label{Sec:Conclusions}

\subsection*{Contributions.} In the present paper, we have laid the groundwork for a general and uniform theory of  transfer of generalized Sahlqvist correspondence and canonicity from normal BAE-logics to normal DLE-logics. Towards this goal, we have introduced  a unifying template for GMT translations, of which the GMT translations in the literature can be recognized as instances. We have proved that generalized Sahlqvist correspondence transfers for all DLE-logics, while generalized Sahlqvist canonicity transfers for the more restricted setting of bHAE-logics. The formulation of these results has been made possible by the recent introduction of a general mechanism for identifying Sahlqvist and inductive classes for any normal DLE-signature \cite{ConPal13}. Consequently, there are not many transfer results in the literature to which these results can be compared, the only exceptions being the transfer of Sahlqvist correspondence for DML-inequalities of \cite[Theorem 3.7]{GeNaVe05}, and  the transfer of canonicity in the form of d-persistence for intuitionistic modal formulas of \cite[Theorem 12]{WoZa98}. The transfer of correspondence shown in this paper generalizes \cite[Theorem 3.7]{GeNaVe05} both as regards the setting (from DML to general normal DLE-logics) and the scope (from Sahlqvist to inductive inequalities). As discussed in Remark \ref{rmk: comparison canonicity}, the transfer of canonicity shown in the present paper is neither subsumed by, nor  does it subsume  \cite[Theorem 12]{WoZa98}. 

Regarding insights, we have also gained a better understanding of the nature  of the difficulties in the transfer of (generalized) Sahlqvist canonicity via GMT translations. These difficulties were discussed as follows in the conclusions  of \cite{GeNaVe05}: \begin{quote}[...] a reduction to
	the classical result for canonicity seems to be much harder than for correspondence, due
	to the following reason. In the correspondence case, where we are working with perfect
	DMAs, there is an obvious way to connect with Boolean algebras with operators, namely
	by taking the (Boolean) complex algebra of the dual frame. In the canonicity case however,
	we would need to embed arbitrary DMAs into BAOs in a way that would interact nicely with
	taking canonical extensions, and we do not see a natural, general way for doing so.
\end{quote}
Our analysis shows that, actually,  the problem does not lie in the interaction between the embedding and the canonical extensions,  but rather in the fact that the embedding $e: \bbA\hookrightarrow  \bbB$ of an arbitrary DLE into a suitable BAE lacks the required adjoint maps, and that while the role of these adjoints can be played to a certain extent by the adjoints of $e^\delta: \bbA^\delta\hookrightarrow  \bbB^\delta$ (cf.~Proposition \ref{prop: main prop of Godel tarski topologically}), we would need to develop a much stronger theory of algebraic (generalized) Sahlqvist canonicity in the BAE setting to be able to reduce (generalized) Sahlqvist canonicity for DLEs to the Boolean setting.

\subsection*{Further directions.} As  mentioned above,  our analysis suggests a way  to obtain the transfer of generalized Sahlqvist canonicity for arbitrary DLE-logics, namely   to develop a generalized canonicity theory in the setting of BAEs which relies on the order-theoretic properties of maps $\bbA\to \bbB^\delta$. The way to this theory has already been paved in \cite{PaSoZh15a}, where  a generalization of the standard theory of  canonical extensions of maps is developed, accounting for maps $f^{\bbA}:\bba\to\bbB^{\delta}$   such that the value of $f^{\bbA}$ is not restricted to clopen elements in $\bbB$.
\subsection*{Blok-Esakia theorem for DLE-logics.} The uniform perspective on GMT translations developed in this paper can perhaps be useful to systematically explore the possible variants of the notion of `modal companion' of a given intuitionistic modal logic, and extend the Blok-Esakia theorem uniformly to DLE-logics.
\subsection*{Methodology: generalization through algebras via duality.} The generalized canonicity-via-translation result for the bi-intuitionistic setting comes from embracing the full extent of the algebraic analysis. Specifically, canonicity-via-translation hinges upon the fact that the  interplay of persistent and non-persistent valuations on frames can be understood and reformulated in terms of an adjunction situation between two complex algebras of the same frame.  In its turn, this adjunction situation generalizes to arbitrary algebras. The same modus operandi, which achieves generalization through algebras via duality, has been fruitfully employed by some of the authors also for very different purposes, such as the definition of the non-classical counterpart of a given logical framework (cf.\  \cite{KurzPalmiLMCS2013, MaPaSa14, CoFrPaTz15}).

\section*{Acknowledgement}
The authors wish to thank the anonymous reviewer for their suggestions which led to substantial improvements of the paper.

\bibliographystyle{abbrv}
\bibliography{translation}
\end{document}